 \newtheorem{thm}{Theorem}
 \newtheorem{cor}[thm]{Corollary}
 \newtheorem{lem}[thm]{Lemma}
 \newtheorem{prop}[thm]{Proposition}
 \theoremstyle{definition}
 \newtheorem{defn}[thm]{Definition}
 \theoremstyle{definition}
 \newtheorem{notn}[thm]{Notation}
 \theoremstyle{remark}
 \newtheorem{rem}[thm]{Remark}
 \theoremstyle{definition}
 \newtheorem{example}[thm]{Example}
 \numberwithin{thm}{section}
 \numberwithin{equation}{section}
 \newcommand{\ab}{\mathrm{ab}}
 \newcommand{\Hom}{\mathrm{Hom}}
 \newcommand{\Spec}{\mathrm{Spec}}
 \newcommand{\End}{\mathrm{End}}
 \newcommand{\Pic}{\mathrm{Pic}}
 \newcommand{\Gal}{\mathrm{Gal}}
 \newcommand{\GL}{\mathrm{GL}}
 \newcommand{\PGL}{\mathrm{PGL}}
 \newcommand{\rank}{\mathrm{rank}}
 \newcommand{\tor}{\mathrm{tor}}
 \newcommand{\Stab}{\mathrm{Stab}}
 \newcommand{\new}{\mathrm{new}}
 \newcommand{\Tr}{\mathrm{Tr}}
 \renewcommand{\mod}{\mathrm{mod}}
 \newcommand{\fp}{\mathfrak p}
 \newcommand{\fq}{\mathfrak q}
 \newcommand{\fr}{\mathfrak r}
 \newcommand{\fn}{\mathfrak n}
 \newcommand{\fm}{\mathfrak m}
 \newcommand{\fd}{\mathfrak d}
 \newcommand{\fE}{\mathfrak E}
 \newcommand{\fI}{\mathfrak I}
 \newcommand{\cO}{\mathcal{O}}
 \renewcommand{\cH}{\mathcal{H}}
 \newcommand{\cE}{\mathcal{E}}
 \newcommand{\cC}{\mathcal{C}}
 \newcommand{\cG}{\mathcal{G}}
 \newcommand{\cJ}{\mathcal{J}}
 \newcommand{\cI}{\mathcal{I}}
 \newcommand{\cT}{\mathcal{T}}
\newcommand{\sT}{\mathscr{T}}
 \newcommand{\gm}{\mathbb{G}}
 \newcommand{\C}{\mathbb{C}}
 \newcommand{\F}{\mathbb{F}}
 \newcommand{\Q}{\mathbb{Q}}
 \newcommand{\W}{\mathbb{W}}
 \newcommand{\Z}{\mathbb{Z}}
 \newcommand{\p}{\mathbb{P}}
 \newcommand{\T}{\mathbb{T}}
 \newcommand{\N}{\mathbb{N}}
 \newcommand{\G}{\Gamma}
 \newcommand{\To}{\longrightarrow}
 \newcommand{\bs}{\setminus}
 \newcommand{\Fi}{F_\infty}
 \newcommand{\bD}{\overline{\Delta}}
 \newcommand{\bG}{\overline{\Gamma}}
\begin{document}

\title{On the Eisenstein ideal over function fields}

\author{Mihran Papikian}
\address{Department of Mathematics, Pennsylvania State University, University Park, PA 16802, U.S.A.}
\email{papikian@psu.edu}
\author{Fu-Tsun Wei}
\address{Institute of Mathematics, Academia Sinica, 6F, Astronomy-Mathematics Building, No. 1, Sec. 4, Roosevelt Road, Taipei 10617, Taiwan}
\email{ftwei@math.sinica.edu.tw}

\thanks{The first author was supported in part by the Simons Foundation.} 
\subjclass[2010]{11G09, 11G18, 11F12}
\keywords{Drinfeld modular curves; Cuspidal divisor group; Eisenstein ideal.}

\dedicatory{Dedicated to Winnie Li}



\begin{abstract}
We study the Eisenstein ideal of Drinfeld modular curves of small levels, and the 
relation of the Eisenstein ideal to the cuspidal divisor group and the component groups of Jacobians of Drinfeld modular curves. 
We prove that the characteristic of the function field is an Eisenstein prime number 
when the level is an arbitrary non square-free ideal of $\F_q[T]$ not equal to a square of a prime. 
\end{abstract}


\maketitle


\section{Introduction} 

The Eisenstein ideal for modular curves over $\Q$ was introduced by Mazur in his seminal paper \cite{Mazur}, 
and since then the Eisenstein ideal has become an indispensable tool in various problems related to modular curves, 
modular Jacobians, modular Galois representations, etc. The problem to develop the theory of Eisenstein ideals 
for Drinfeld modular curves was suggested by Mazur, already in the introduction of \cite{Mazur}. 
The first attempt to develop this theory was made by Tamagawa \cite{Tamagawa}, but more 
comprehensive results were obtained by P\'al \cite{Pal}. Both \cite{Tamagawa} and \cite{Pal} assume 
that the level is prime. In \cite{PW}, in connection with the problem 
of Jacquet-Langlands isogenies over function fields, we examined the Eisenstein ideal on Drinfeld modular curves 
whose level is a product of two distinct primes. We discovered that some of the properties 
of the Eisenstein ideal in that case are quite different from its prime level counterpart. 
In this paper we continue our study of the Eisenstein ideal for non-prime levels, and its 
relation to the cuspidal divisor group and the component groups of Jacobians of Drinfeld modular curves. 
Our goal here is to compute everything explicitly when the level is small, and from this make some 
predictions about the behaviour of the Eisenstein ideal in general. 

\vspace{0.1in}

Let $\F_q$ be a finite field with $q$ 
elements, where $q$ is a power of a prime number $p$. Let $A=\F_q[T]$ 
be the ring of polynomials in indeterminate $T$ with coefficients 
in $\F_q$, and $F=\F_q(T)$ be the rational function field. 
The degree map $\deg: F\to \Z\cup \{-\infty\}$, which associates 
to a non-zero polynomial its degree in $T$ and $\deg(0)=-\infty$, defines 
a norm on $F$ by $|a|:=q^{\deg(a)}$. The corresponding place of $F$ 
is usually called the \textit{place at infinity}, and is denoted by $\infty$; it plays a role similar to the archimedean place of $\Q$. 
We also define a norm and degree on the ideals of $A$ by $|\fn|:=\#(A/\fn)$ and $\deg(\fn):=\log_q|\fn|$. 
Let $\Fi$ denote the completion of $F$ at $\infty$, and $\C_\infty$ denote the 
completion of an algebraic closure of $\Fi$. Let $\Omega:=\C_\infty - \Fi$ be the \textit{Drinfeld half-plane}.  

Let $\fn\lhd A$ be a non-zero ideal. The level-$\fn$ \textit{Hecke congruence subgroup} of $\GL_2(A)$ is 
$$
\G_0(\fn):=\left\{\begin{pmatrix} a & b \\  c & d\end{pmatrix}\in \GL_2(A)\ \bigg|\ c\equiv 0\ \mod\  \fn \right\}.   
$$
Let $\T(\fn)$ be the $\Z$-algebra generated by the Hecke operators 
$T_\fm$, $\fm\lhd A$, acting on the group $\cH_0(\fn, \Z)$  
of $\Z$-valued $\G_0(\fn)$-invariant 
cuspidal harmonic cochains on the Bruhat-Tits tree $\sT$ of $\PGL_2(\Fi)$; see Section \ref{sAL} 
for the definitions. 
The \textit{Eisentein ideal} $\fE(\fn)$ of $\T(\fn)$ is the ideal generated by the elements 
$$
\left\{T_\fp-|\fp|-1\ \big|\ \fp \text{ is prime}, \fp \nmid \fn\right\}.
$$
(For some alternative ways of defining this ideal see $\S$\ref{ssAltDef}.) The quotient ring $\T(\fn)/\fE(\fn)$ 
is finite (Lemma \ref{lemTE0}), and constitutes the main object of study of this paper. 
In some sense, $\T(\fn)/\fE(\fn)$  encodes congruences between cuspidal harmonic cochains and 
Eisenstein series. 

\begin{defn}
Let $G$ be a $\T(\fn)$-module. We say that $g\in G$ is \textit{Eisenstein} if $T_\fp g=(|\fp|+1)g$ 
for all prime $\fp\nmid \fn$. The Eisenstein elements form a submodule of $G$. We will 
denote this submodule by $G[\fE(\fn)]$. (It is clear that if $g$ is Eisenstein, then it is annihilated by 
all elements of  $\fE(\fn)$, which justifies the notation.) We say that $G$ is \textit{Eisenstein} if $G=G[\fE(\fn)]$. 
\end{defn}

The group $\G_0(\fn)$ acts on $\Omega$ via linear fractional transformations. 
Drinfeld proved in \cite{Drinfeld} that the quotient $\G_0(\fn)\bs \Omega$ 
is the space of $\C_\infty$-points of an affine curve $Y_0(\fn)$ defined over $F$,  
which is a moduli space of rank-$2$ Drinfeld modules.  
The unique smooth projective curve over $F$ containing $Y_0(\fn)$ as an 
open subvariety is denoted by $X_0(\fn)$. 
The Hecke algebra $\T(\fn)$ naturally acts on the Jacobian $J_0(\fn)$ 
of $X_0(\fn)$. This action functorially extends to the N\'eron model of $J_0(\fn)$, 
hence $\T(\fn)$ also acts on the component groups of $J_0(\fn)$. 

It is well-known that the component groups of classical modular Jacobians $J_0(N)$ are Eisenstein. 
This was proved by Ribet in the semistable reduction case \cite{RibetCGSS}, and 
by Edixhoven in general \cite{EdixhovenECG}. It is more-or-less clear that the arguments in \cite{RibetCGSS} and 
\cite{EdixhovenECG} can be transferred to the function fields setting (although this is not in published literature), so 
it is very likely that the component groups of Drinfeld modular Jacobians $J_0(\fn)$ at finite primes are Eisenstein. 
On the other hand, in addition to the primes dividing $\fn$, $J_0(\fn)$ also has bad (purely toric) reduction at $\infty$, 
and it is not hard to construct examples where the component group $\Phi_\infty(\fn)$ of $J_0(\fn)$ at $\infty$ 
is not Eisenstein; see Examples \ref{exmP2} and \ref{example810}.  
In this paper we prove the following:

\begin{thm}\label{thmMAIN1}
Assume $\deg(\fn)=3$. Then $\Phi_\infty(\fn)$ is Eisenstein, and there is an isomorphism of $\T(\fn)$-modules 
$\T(\fn)/\fE(\fn)\cong \Phi_\infty(\fn)$. 
\end{thm}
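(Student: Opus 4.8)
The plan is to compute both sides of the claimed isomorphism completely explicitly. Since $\deg(\fn)=3$, the ideal $\fn$ has one of only five shapes: a prime of degree $3$; $\fn=\fq_1\fq_2$ with $\deg(\fq_1)=1$, $\deg(\fq_2)=2$; $\fn=\fq_1\fq_2\fq_3$ with $\fq_1,\fq_2,\fq_3$ distinct of degree $1$; $\fn=\fq_1^2\fq_2$ with $\fq_1,\fq_2$ distinct of degree $1$; or $\fn=\fq^3$ with $\fq$ of degree $1$. For the prime level and for $\fn=\fq_1\fq_2$ the ring $\T(\fn)/\fE(\fn)$ is already understood (from \cite{Pal} and \cite{PW}, respectively), so the genuinely new computations of $\T(\fn)/\fE(\fn)$ are for the three remaining shapes; but in every case the identification with $\Phi_\infty(\fn)$ still has to be carried out. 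The point that makes the whole thing feasible is that for $\deg(\fn)=3$ the relevant combinatorics is shallow and essentially uniform in $q$.

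For the component group at $\infty$: the curve $X_0(\fn)$ has split multiplicative reduction at $\infty$, and by the Gekeler--Reversat analytic uniformization $J_0(\fn)$ is the quotient of $\Hom(\cH_0(\fn,\Z),\C_\infty^\times)$ by the image of $\cH_0(\fn,\Z)$ under the period pairing. Hence $\Phi_\infty(\fn)$ is canonically the cokernel of the period-pairing homomorphism $\cH_0(\fn,\Z)\to\Hom(\cH_0(\fn,\Z),\Z)$, and this identification is $\T(\fn)$-equivariant. I would make the target concrete by passing to the finite quotient graph $\G_0(\fn)\backslash\sT$: identify $\cH_0(\fn,\Z)$ with the first homology of the core of this graph, choose a basis of cycles adapted to the projection $\G_0(\fn)\backslash\sT\to\GL_2(A)\backslash\sT$ onto a half-line, record the edge ``lengths'' (because $\G_0(\fn)$ has torsion these are not all $1$, but are the relevant indices of edge stabilizers, i.e.\ powers of $q$ times factors dividing $q-1$), and write down the resulting period matrix in closed form. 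Since $\deg(\fn)=3$ the part of the quotient graph carrying nontrivial homology is small, so the period matrix and its cokernel can be determined explicitly, with the Hecke action, in each of the five cases.

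For $\T(\fn)/\fE(\fn)$: the Eisenstein ideal governs the congruences between the cuspidal harmonic cochains and the Eisenstein series attached to the cusps of $X_0(\fn)$, and one extracts $\T(\fn)/\fE(\fn)$ as the cokernel of the natural map from $\cH_0(\fn,\Z)$ into the lattice spanned by these Eisenstein cochains --- equivalently, as the group of relations among the cusps that become invisible inside $\cH_0(\fn,\Z)$. Using the explicit list of cusps of $X_0(\fn)$ and the action on them of the operators $T_\fp$ and of the degeneracy/Atkin--Lehner operators --- the crucial fact being that $T_\fp$ carries a single cusp to a cuspidal divisor of degree $|\fp|+1$ --- this too is computed in closed form in each case. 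The same input yields the first assertion of the theorem: $\Phi_\infty(\fn)$ is generated by the images of the cusps under specialization at $\infty$, and modulo the relations defining $\Phi_\infty(\fn)$ the operator $T_\fp$ acts on these images as multiplication by $|\fp|+1$, so $\Phi_\infty(\fn)$ is Eisenstein.

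It remains to compare the two explicit finite abelian groups, with their Hecke actions. Because both are Eisenstein, the $\T(\fn)$-module structure on each is pinned down by the underlying abelian group together with the (trivial) action of $\fE(\fn)$, so the asserted isomorphism reduces to an isomorphism of abelian groups; a natural intermediary here is the cuspidal divisor group of $X_0(\fn)$, which specializes onto $\Phi_\infty(\fn)$ at $\infty$ and is tied to $\T(\fn)/\fE(\fn)$ through the Eisenstein ideal. The step I expect to be the main obstacle is the explicit, Hecke-equivariant determination of the period matrix on $\cH_0(\fn,\Z)$ for the imprimitive levels $\fn=\fq^3$ and $\fn=\fq_1^2\fq_2$: there the quotient graph $\G_0(\fn)\backslash\sT$ has a more intricate structure near the base of the half-line $\GL_2(A)\backslash\sT$ (several cusps of differing widths meeting a common small subgraph), and pinning down the edge lengths and a clean cycle basis precisely enough that the cokernel --- not merely its order --- emerges correctly and compatibly with the Hecke action is the delicate part.
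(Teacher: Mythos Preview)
Your overall shape is close to the paper's, but there are two genuine gaps.

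First, your argument that $\Phi_\infty(\fn)$ is Eisenstein rests on the claim that $\Phi_\infty(\fn)$ is generated by the images of the cusps under specialization at $\infty$. This is false in two of the five cases. When $\fn$ is an irreducible cubic, the specialization map $\wp_\infty:\cC(\fn)\to\Phi_\infty$ has cokernel $\Z/\gcd(3,q-1)\Z$; when $\fn=\fq_1\fq_2\fq_3$ is a product of three distinct degree-one primes, the cokernel is $\Z/(q-1)\Z$. So in these cases the cuspidal image does not generate $\Phi_\infty$, and a different argument is needed. The paper handles precisely these two cases by applying the snake lemma to the monodromy sequence to embed $\Phi_\infty$ into $\cH_{00}(\fn,\Z/N\Z)$, and then identifying the image explicitly with $\Z$-linear combinations of the Eisenstein series $E_\fm$ (for $\fm\mid\fn$) reduced modulo $N$; the Eisenstein property then comes from $E_\fm|T_\fp=(|\fp|+1)E_\fm$.

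Second, your final comparison step does not work as stated. You assert that once both sides are Eisenstein, the $\T(\fn)$-module structure is determined by the underlying abelian group. That is only true when $\T(\fn)/\fE(\fn)$ is of the form $\Z/N\Z$; in the case $\fn=\fq_1\fq_2\fq_3$ one has $\T(\fn)/\fE(\fn)\cong\Z/(q+1)\Z\oplus\Z/(q+1)\Z\oplus\Z/(q-1)^2(q+1)\Z$, and over such a ring there are non-isomorphic modules with the same underlying abelian group. The paper never compares the two sides abstractly. Instead it proves that the Fourier pairing $\T(\fn)\times\cH_0(\fn,\Z)\to\Z$, $(T,f)\mapsto(f|T)^\ast(1)$, is \emph{perfect} for $\deg(\fn)=3$ (Theorem~\ref{thmPP}), so $\Hom(\cH_0(\fn,\Z),\Z)\cong\T(\fn)$ as $\T(\fn)$-modules, and the monodromy sequence then yields a natural $\T(\fn)$-equivariant surjection $\T(\fn)\twoheadrightarrow\Phi_\infty$. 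Once $\Phi_\infty$ is shown to be Eisenstein this factors through $\T(\fn)/\fE(\fn)$, and an upper bound $\#\T(\fn)/\fE(\fn)\le\#\Phi_\infty$ (obtained by bounding $\cE_{00}(\fn,R)$ case by case) forces the surjection to be an isomorphism. The perfectness of this pairing---and the consequence that $\Phi_\infty$ is a priori cyclic as a $\T(\fn)$-module---is the structural ingredient your outline is missing.
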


Note that $\deg(\fn)=3$ is the smallest degree for which $\T(\fn)\neq 0$. When $\deg(\fn)=3$, 
the $\Z$-rank of $\T(\fn)$ is equal to $q$ (resp. $q-1$) if $\fn$ is square-free (resp. not square-free). 
Up to an affine transformation $T\mapsto aT + b$ with
$a \in \F_q^\times$ and $b \in \F_q$, there are 5 different cases, namely 
\begin{enumerate}
\item $\fn=T^3$;
\item $\fn=T^2(T-1)$;
\item $\fn$ is irreducible;
\item $\fn=T\fp$, where $\fp$ is irreducible of degree $2$;
\item $\fn=T(T-1)(T-c)$, where $c\in \F_q$, $c\neq 0,1$ (here we must have $q>2$). 
\end{enumerate}

In Section \ref{sCDG}, for $\deg(\fn)=3$ we compute $\Phi_\infty(\fn)$, the cuspidal divisor group $\cC(\fn)$ of $J_0(\fn)$, 
and the canonical homomorphism $\wp_\infty: \cC(\fn)\to \Phi_\infty(\fn)$ arising from the N\'eron mapping property. 
With above numbering of cases, the results are the following: 
\begin{enumerate}
\item $\cC(\fn)\overset{\wp_\infty}{\cong}\Phi_\infty(\fn)\cong \Z/q^2\Z$. 
\item $\cC(\fn)\overset{\wp_\infty}{\cong}\Phi_\infty(\fn)\cong \Z/q(q^2-1)\Z$. 
\item $\cC(\fn)\cong \Phi_\infty(\fn)\cong \Z/(q^2+q+1)\Z$, but $\wp_\infty$ is not necessarily an isomorphism 
$$
\xymatrix{0 \ar[r] & \Z/(3, q-1)\Z\ar[r]  &\cC(\fn) \ar[r]^-{\wp_\infty}  & \Phi_\infty(\fn)\ar[r] & \Z/(3, q-1)\Z \ar[r] & 0.}
$$
\item 
$$ \cC(\fn)\cong \Z/(q+1)\Z\oplus \Z/(q^2+1)\Z, $$ $$\Phi_\infty(\fn)\cong \Z/(q^2+1)(q+1)\Z,$$
$$
\xymatrix{0 \ar[r] & \Z/(2, q-1)\Z\ar[r]  &\cC(\fn) \ar[r]^-{\wp_\infty}  & \Phi_\infty(\fn)\ar[r] & \Z/(2, q-1)\Z \ar[r] & 0.}
$$
\item 
$$ \cC(\fn)\cong \Z/(q+1)\Z\oplus \Z/(q+1)\Z\oplus \Z/(q-1)(q+1)\Z,$$ 
$$\Phi_\infty(\fn)\cong \Z/(q+1)\Z\oplus \Z/(q+1)\Z\oplus \Z/(q-1)^2(q+1)\Z,$$
$$
\xymatrix{0 \ar[r] & \cC(\fn) \ar[r]^-{\wp_\infty}  & \Phi_\infty(\fn)\ar[r] & \Z/(q-1)\Z \ar[r] & 0.}
$$
\end{enumerate}

\begin{rem}
In fact, (4) is a result from \cite{PapikianJNT}, so Section \ref{sCDG} contains only the 
calculations for the other cases. Also, (3) is a result of Gekeler \cite[$\S$6]{GekelerCDG}, but our argument is  
somewhat different.  
\end{rem}

We proved Theorem \ref{thmMAIN1} in \cite{PW} for the case (4). In Section \ref{sec4}, we adapt the 
argument from \cite{PW} to the other cases. The outline of this argument is the following.  
First, we show that $\Phi_\infty(\fn)$ is Eisenstein. Next, we 
show that there is a $\T(\fn)$-equivariant surjective homomorphism $\T(\fn)\to \Phi_\infty(\fn)$. This 
implies that there is a $\T(\fn)$-equivariant surjection $\T(\fn)/\fE(\fn)\to \Phi_\infty(\fn)$. Finally, we give an upper bound on the 
order of $\T(\fn)/\fE(\fn)$ which matches the order of $\Phi_\infty(\fn)$, so the previous surjection is an isomorphism. 

To carry out the strategy outlined above, in Section \ref{sHA}, we prove some preliminary results 
about the Hecke algebra $\T(\fn)$, which might be of independent interest. Let $\T(\fn)^0$ be 
the subalgebra of $\T(\fn)$ generated by the Hecke operators $T_\fm$ with $\fm$ coprime to $\fn$. 
\begin{thm}\label{thmMAIN4}
Assume $\deg(\fn)=3$. 
\begin{itemize}
\item[(i)] In all cases, except (5), $\T(\fn)=\T(\fn)^0$. 
\item[(ii)] There is a natural isomorphism of $\T(\fn)$-modules $$\Hom(\cH_0(\fn, \Z), \Z)\cong \T(\fn).$$
\item[(iii)] The Hecke operators $\{T_\fp\ |\ \deg(\fp)=1\}$ span $\T(\fn)$ over $\Z$.  
\end{itemize}
\end{thm}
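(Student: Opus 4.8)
The plan is to exploit the explicit smallness of $\deg(\fn)=3$: here $\cH_0(\fn,\Z)$ has $\Z$-rank $q$ or $q-1$, and the combinatorics of the quotient graph $\G_0(\fn)\bs\sT$ is completely describable in each of the five cases. My first step is to set up the action of the Hecke operators on $\cH_0(\fn,\Z)$ concretely in terms of this quotient graph, using the standard description of $T_\fp$ for $\deg(\fp)=1$ (sum over the $q+1$ neighbours, or $q$ neighbours plus a correction when $\fp\mid\fn$) together with the Atkin--Lehner involutions $W_\fq$ for the prime powers $\fq\|\fn$. In the non-square-free cases the operator $U_\fq$ for $\fq\mid\fn$ enters, and the point of part (i) is that this $U_\fq$ is already expressible in terms of the $T_\fp$ with $\fp\nmid\fn$ — I would show this by checking on the rank-$q$ or rank-$(q-1)$ lattice directly, i.e. computing enough $T_\fp$ for $\deg(\fp)=1,2$ to see that $U_\fq$ lies in the subring they generate. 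Case (5), $\fn=T(T-1)(T-c)$, is the honest exception because of the extra oldform contributions from the three weight-one levels, so there $\T(\fn)\supsetneq\T(\fn)^0$ and one must keep $U_\fq$ as a separate generator.

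For part (ii) the natural map $\T(\fn)\to\Hom(\cH_0(\fn,\Z),\Z)$ sends $t$ to the functional $\varphi\mapsto a_1(t\varphi)$, where $a_1$ is the "first Fourier coefficient" functional on harmonic cochains (the analogue of $a_1$ on modular forms); that this is $\T(\fn)$-equivariant and injective is formal from the perfectness of the pairing $\T(\fn)\times\cH_0(\fn,\Z)\to\Z$, $(t,\varphi)\mapsto a_1(t\varphi)$, whose injectivity in the first variable is multiplicity one and in the second is the $q$-expansion principle. The content is surjectivity, equivalently that this pairing is perfect over $\Z$ and not merely over $\Q$ — i.e. that there are no congruence obstructions. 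Here I would again use $\deg(\fn)=3$: the rank is so small that one can exhibit, for each of cases (1)--(4), a $\Z$-basis of $\cH_0(\fn,\Z)$ and a dual $\Z$-basis realized by Hecke operators applied to a single normalized eigen-like generator, or alternatively invoke that $\cH_0(\fn,\Z)$ is a free $\T(\fn)$-module of rank one in these cases (which is how $\T(\fn)/\fE(\fn)\cong\Phi_\infty(\fn)$ will ultimately be read off). Case (5) needs the $U_\fq$'s adjoined, but the same freeness statement is expected to hold for the full $\T(\fn)$.

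Part (iii) then follows by combining (i), (ii), and a dimension count: by (i) (away from case (5)) $\T(\fn)$ is generated by $\{T_\fp:\fp\nmid\fn\}$, and one reduces the degree of the primes needed by using the Hecke recursion $T_{\fp^{k+1}}=T_\fp T_{\fp^k}-|\fp|T_{\fp^{k-1}}$ for $\fp\nmid\fn$ to eliminate prime powers, and multiplicativity $T_{\fm\fm'}=T_\fm T_{\fm'}$ for coprime $\fm,\fm'$; what remains is to see that $T_\fp$ for $\deg(\fp)\ge 2$ already lies in the $\Z$-span of the degree-one $T_\fp$'s and their products — but products of degree-one operators land in $\T(\fn)$, not a priori in the $\Z$-\emph{span}, so the real assertion is that the rank-$q$ (or $q-1$) abelian group $\T(\fn)$ is spanned, as a $\Z$-module, by the finitely many degree-one $T_\fp$'s. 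Since there are exactly $q$ monic primes of degree one (namely $T-a$, $a\in\F_q$), and at most one or two of them divide $\fn$, the available $T_\fp$'s number $q$, $q-1$, or $q-2$; one checks case by case that they span, the tightest being case (5) where only $q-2$ of them survive but the rank is $q-1$, so there $\T(\fn)=\T(\fn)^0$ fails and instead $\{T_\fp:\deg(\fp)=1\}\cup\{U_\fq\}$ must be shown to span. The main obstacle, in all three parts, is the same: verifying the \emph{integral} (as opposed to rational) statements, since over $\Q$ everything is immediate from strong multiplicity one — the genuine work is a finite but delicate linear-algebra computation of the Hecke action on the explicit integral lattice $\cH_0(\fn,\Z)$ in each of the five cases, and keeping track of the small primes $2,3$ and $p\mid q$ where the lattices can be non-split.
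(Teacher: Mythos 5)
Your overall strategy --- explicit small-level computation on $\G_0(\fn)\bs\sT$ and the $\Z$-valued pairing $(t,f)\mapsto (f|t)^\ast(1)$, with \emph{integral} perfectness identified as the crux --- is the same as the paper's, but the proposal leaves gaps at exactly the decisive points. The paper's proof rests on one explicit identity, obtained by a Fourier-coefficient computation together with the harmonicity relations on the quotient graph: for every degree-one prime $(T-u)$ and every $f\in\cH_0(\fn,\Z)$ one has $(f|T_{T-u})^\ast(1)=f(b_u)$, where the $b_u$ are the edges on whose values a cuspidal cochain is determined ($u\in\F_q$ in the square-free cases, $u\in\F_q^\times$ otherwise). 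This single formula gives (ii) and (iii) simultaneously: the functionals $f\mapsto (f|T_{T-u})^\ast(1)$ realize the dual basis of $\Hom(\cH_0(\fn,\Z),\Z)$, so the map $\T(\fn)\to\Hom(\cH_0(\fn,\Z),\Z)$ is surjective, hence an isomorphism by comparing ranks, and the degree-one operators (which in the paper's convention include $U_\fp=T_\fp$ for degree-one $\fp\mid\fn$) span $\T(\fn)$ over $\Z$. Your proposal defers precisely this verification (``one checks case by case'', ``a finite but delicate linear-algebra computation''), and your fallback for (ii) --- invoking that $\cH_0(\fn,\Z)$ is a free $\T(\fn)$-module of rank one --- cannot be invoked: it is unproven here and at least as strong as what you are trying to establish; moreover for case (5) you only say the statement is ``expected to hold'', although (ii) is asserted for all five cases.

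For (i) the clean mechanism is missing from your plan: the paper deduces it from the relation $\sum_{u\in\F_q}T_{T-u}=-1$ (which follows from $(f|(1+\sum_u T_{T-u}))^\ast(1)=0$ and nondegeneracy of the pairing) together with $U_\fp=0$ when $\fp^2\mid\fn$ and $U_\fp=-W_\fp$ when $\fp\parallel\fn$, both consequences of level lowering (Lemma \ref{lemUW}) and the vanishing of $\cH_0$ in levels of degree at most $2$; these let one replace every $U_\fp$ in the degree-one basis by $1$ and operators coprime to $\fn$. Your stated reason why case (5) is exceptional --- ``extra oldform contributions'' from lower levels --- is wrong: there are no oldforms at all when $\deg(\fn)=3$, which is exactly why $[\T(\fn):\T(\fn)^0]$ is finite; the failure of $\T(\fn)=\T(\fn)^0$ in case (5) is seen only a posteriori, because $\T(\fn)/\fE(\fn)$ turns out to be non-cyclic while $\T(\fn)^0/\fE(\fn)^0$ is always cyclic. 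Finally, your bookkeeping in case (5) is off: three degree-one primes divide $\fn$, the rank of $\T(\fn)$ is $q$ (not $q-1$), and the set in (iii) already contains the three $U$'s, which together with the remaining $q-3$ coprime degree-one operators form a $\Z$-basis; with your counts (``only $q-2$ survive'', rank $q-1$) the intended dimension count does not even close. So the approach is the right one in spirit, but as written the proof is incomplete and contains incorrect supporting claims.
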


\begin{rem}
The Hecke algebra $\T(\fn)$ for $\deg(\fn)=3$ was studied by Gekeler in \cite{GekelerKleinem}, and (iii) 
is implicitly contained there. 
\end{rem}

An unexpected consequence of Theorem \ref{thmMAIN1} is that $\T(\fn)\neq \T(\fn)^0$ in case (5), 
although the index $[\T(\fn):\T(\fn)^0]$ is finite. (The index is finite because there are no ``old forms'' 
in $\cH_0(\fn, \Z)$ when $\deg(\fn)=3$.) In Section \ref{sIAL}, we prove an analogue of 
a theorem of Atkin and Lehner for $\cH_0(\fn,\Z)$, and deduce from this the following restriction on the index: 

\begin{thm}\label{thmMAIN6} Assume
$\fn=T(T-1)(T-c)$, where $c\in \F_q$, $c\neq 0,1$. 
If a prime number $\ell$ divides the order of $\T(\fn)/\T(\fn)^0$, then $\ell$ divides $q(q+1)$. Conversely, if $\ell$ 
divides $q+1$, then $\ell$ divides the order of $\T(\fn)/\T(\fn)^0$. 
\end{thm}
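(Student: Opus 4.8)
The plan is to combine the integral Atkin--Lehner theory developed in Section~\ref{sIAL} with Theorems~\ref{thmMAIN1} and~\ref{thmMAIN4}. Write $\fn=\fq_1\fq_2\fq_3$ with $\fq_1,\fq_2,\fq_3$ distinct primes of degree $1$. Since $\deg(\fn)=3$ there are no oldforms, so $\cH_0(\fn,\Z)$ is $\fq_i$-new for every $i$, and the Atkin--Lehner theorem supplies pairwise commuting involutions $W_{\fq_1},W_{\fq_2},W_{\fq_3}$ of $\cH_0(\fn,\Z)$ with $T_{\fq_i}=-W_{\fq_i}$; in particular $T_{\fq_i}^2=1$. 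As $\fn$ is squarefree one has $T_{\fq_i^k}=T_{\fq_i}^k$, so $\T(\fn)=\T(\fn)^0[T_{\fq_1},T_{\fq_2},T_{\fq_3}]$ and the finite group $\T(\fn)/\T(\fn)^0$ is generated by the classes of $T_{\fq_1},T_{\fq_2},T_{\fq_3}$. For the first assertion it therefore suffices to show $T_{\fq_i}\in\T(\fn)^0\otimes\Z_{(\ell)}$ for every prime $\ell\nmid q(q+1)$, for then $\bigl(\T(\fn)/\T(\fn)^0\bigr)\otimes\Z_{(\ell)}=0$; note any such $\ell$ is odd, as $2\mid q(q+1)$.

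For the first assertion, fix an odd $\ell$. Since $T_{\fq_i}^2=1$, the elements $\tfrac12(1\pm T_{\fq_i})\in\T(\fn)\otimes\Z_{(\ell)}$ are orthogonal idempotents, and $T_{\fq_i}\in\T(\fn)^0\otimes\Z_{(\ell)}$ if and only if the splitting $\cH_0(\fn,\Z)\otimes\Z_{(\ell)}=\cH_0^{+}\oplus\cH_0^{-}$ into $W_{\fq_i}$-eigenspaces is already cut out by $\T(\fn)^0\otimes\Z_{(\ell)}$, i.e.\ if and only if no maximal ideal of $\T(\fn)^0\otimes\Z_{(\ell)}$ is supported on both $\cH_0^{+}$ and $\cH_0^{-}$. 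Unwound, this says there is no congruence modulo $\ell$ between the eigenvalue systems $\{T_\fp\}_{\fp\nmid\fn}$ of two newforms of level $\fn$ with opposite $W_{\fq_i}$-eigenvalue. Proving that every such congruence prime divides $q(q+1)$ is the main obstacle; this is where the Atkin--Lehner analogue of Section~\ref{sIAL} is needed, as it provides both the relations $T_{\fq_i}=-W_{\fq_i}$ and $T_{\fq_i}^2=1$ used above and the finer information about how the $W_{\fq_i}$ act on $\cH_0(\fn,\Z)$ that makes the estimate feasible. Concretely one is reduced to a rank-$q$ lattice computation: using $\Hom(\cH_0(\fn,\Z),\Z)\cong\T(\fn)$ and the fact that $\{T_{(T-a)}\}_{a\in\F_q}$ is a $\Z$-basis of $\T(\fn)$ (Theorem~\ref{thmMAIN4}, together with Gekeler's explicit description of $\cH_0(\fn,\Z)$ for $\deg(\fn)=3$ in \cite{GekelerKleinem}), one writes each $T_{\fq_i}$ in this basis, identifies the sublattice $\T(\fn)^0$, and verifies that the elementary divisors of $\T(\fn)/\T(\fn)^0$ have all of their prime factors dividing $q(q+1)$.

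For the converse, let $\ell$ be a prime dividing $q+1$. By Theorem~\ref{thmMAIN1} there is an isomorphism of $\T(\fn)$-modules $\T(\fn)/\fE(\fn)\cong\Phi_\infty(\fn)$, so $\Phi_\infty(\fn)$ is a cyclic $\T(\fn)$-module generated by the image $\bar 1$ of $1$; moreover $\Phi_\infty(\fn)$ is Eisenstein, hence $T_\fp$ acts on it as the scalar $|\fp|+1$ for every $\fp\nmid\fn$. Therefore $\T(\fn)^0$ acts on $\Phi_\infty(\fn)$ through $\Z\cdot\id$, and the image of the subring $\T(\fn)^0$ under the surjection $\T(\fn)\twoheadrightarrow\Phi_\infty(\fn)$ is exactly the cyclic subgroup $\Z\bar 1$. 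Consequently $\T(\fn)/\T(\fn)^0$ surjects onto $\Phi_\infty(\fn)/\Z\bar 1$, hence onto $\bigl(\Phi_\infty(\fn)/\Z\bar 1\bigr)\otimes_\Z\F_\ell$. In case (5) one has $\Phi_\infty(\fn)\cong (\Z/(q+1)\Z)^{2}\oplus\Z/(q-1)^2(q+1)\Z$, so $\Phi_\infty(\fn)/\ell\Phi_\infty(\fn)\cong\F_\ell^{3}$, whereas the image of the cyclic group $\Z\bar 1$ spans at most a line; hence $\bigl(\Phi_\infty(\fn)/\Z\bar 1\bigr)\otimes\F_\ell$ has $\F_\ell$-dimension at least $2$. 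In particular $\bigl(\T(\fn)/\T(\fn)^0\bigr)\otimes\F_\ell\ne 0$, i.e.\ $\ell\mid\#\bigl(\T(\fn)/\T(\fn)^0\bigr)$, which is the claim. (The same computation gives that $\T(\fn)/\T(\fn)^0$ needs at least two generators at each prime dividing $q+1$.)
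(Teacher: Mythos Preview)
Your argument for the converse direction (primes dividing $q+1$ must divide the index) is correct and is essentially the paper's proof, rephrased: the paper observes that if $\ell\nmid[\T(\fn):\T(\fn)^0]$ then $\T(\fn)\otimes\F_\ell=\T(\fn)^0\otimes\F_\ell$, so $(\T(\fn)/\fE(\fn))\otimes\F_\ell$ would be cyclic (Lemma~\ref{lemTE0}), contradicting $\Phi_\infty\otimes\F_\ell\cong\F_\ell^3$ when $\ell\mid q+1$. Your surjection $\T(\fn)/\T(\fn)^0\twoheadrightarrow\Phi_\infty/\Z\bar 1$ encodes the same idea.

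The forward direction, however, has a genuine gap. You correctly set up the problem---reducing to showing $T_{\fq_i}\in\T(\fn)^0\otimes\Z_{(\ell)}$ for $\ell\nmid q(q+1)$---but then never prove it. The paragraph ending with ``verifies that the elementary divisors \dots\ have all of their prime factors dividing $q(q+1)$'' is a description of a calculation, not the calculation itself, and the reformulation in terms of congruences between eigensystems with opposite $W_{\fq_i}$-sign does not by itself bound those congruence primes. You invoke ``the Atkin--Lehner analogue of Section~\ref{sIAL}'' but do not say how it enters.

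Here is how the paper actually uses Theorem~\ref{thmIntAL}. By Theorem~\ref{thmPP} the pairing $\T(\fn)\times\cH_0(\fn,\Z)\to\Z$, $(T,f)\mapsto (f|T)^\ast(1)$, is perfect. If $\ell\mid[\T(\fn):\T(\fn)^0]$, then the induced map $\cH_{00}(\fn,\F_\ell)\to\Hom(\T(\fn)^0\otimes\F_\ell,\F_\ell)$ has nonzero kernel; a nonzero $f$ in that kernel satisfies $f^\ast(\fm)=0$ whenever $\fm$ is coprime to $\fn$, i.e.\ unless some $\fq_i\mid\fm$. Now Theorem~\ref{thmIntAL} applies with $s_{\fq_i,\fq_j}=\gcd(q+1,q+1)=q+1$: if $\ell\nmid q(q+1)$ then the factor $s_{\fq_1,\fq_2}s_{\fq_1,\fq_3}s_{\fq_2,\fq_3}=(q+1)^3$ is a unit in $\F_\ell$ (the condition $\ell\nmid q$ is needed so that $\F_\ell$ is a coefficient ring, which requires $p\in\F_\ell^\times$), and one obtains $f=f_1|B_{\fq_1}+f_2|B_{\fq_2}+f_3|B_{\fq_3}$ with $f_i\in\cH_{00}(\fn/\fq_i,\F_\ell)$. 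But $\deg(\fn/\fq_i)=2$ forces $\cH_0(\fn/\fq_i,\Z)=0$, hence $f=0$, a contradiction. This is the missing mechanism linking the index to $q(q+1)$; your outline gestures toward Section~\ref{sIAL} but never deploys this step.
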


In \cite{Mazur}, as one of the first applications of his theory of Eisenstein ideal, Mazur 
proved that the rational torsion subgroup $J_0(N)(\Q)_\tor$ coincides with the cuspidal divisor group when $N$ is prime.  
The analogue of this result for Drinfeld Jacobians $J_0(\fn)$ of prime level was proved by P\'al \cite{Pal}. 
In Section \ref{sRT}, as an application of Theorem \ref{thmMAIN1}, we prove 

\begin{thm}\label{thmMAIN7} Let $\cT(\fn)$ be the torsion subgroup of the group of $F$-rational points of $J_0(\fn)$. 
The cuspidal divisor group is rational over $F$ and coincides with $\cT(\fn)$ 
for $\fn=T^3$ and $\fn=T^2(T-1)$. 
\end{thm}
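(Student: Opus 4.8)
The plan is to prove the two inclusions $\cC(\fn)\subseteq\cT(\fn)$ and $\cT(\fn)\subseteq\cC(\fn)$ separately. The first is essentially formal: for $\fn=T^3$ and $\fn=T^2(T-1)$ every cusp of $X_0(\fn)$ is rational over $F$ — this follows from the explicit description of the cusps, since $\fn$ is a power of a linear prime, respectively a product of powers of linear primes — so $\cC(\fn)$ is generated by classes of $F$-rational divisors of degree $0$, and by the Manin--Drinfeld theorem for Drinfeld modular curves (Gekeler) these classes are torsion. Hence $\cC(\fn)\subseteq\cT(\fn)$, and both groups are finite.

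The reverse inclusion is the heart of the matter, and the key tool is reduction at the place $\infty$. Since $J_0(\fn)$ has purely toric, and over $\Fi$ split, reduction at $\infty$, the kernel of the specialization map from $J_0(\fn)(\Fi)$ to the special fibre of the N\'eron model is the associated formal group, a pro-$p$ group which is torsion-free in characteristic $p$. Therefore specialization at $\infty$ is injective on the \emph{entire} torsion subgroup $\cT(\fn)$, in particular on its $p$-primary part — in contrast with the situation at a finite prime. Composing with the projection to the component group we obtain a homomorphism $r\colon\cT(\fn)\to\Phi_\infty(\fn)$ whose restriction to $\cC(\fn)$ is the map $\wp_\infty$. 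In the two cases at hand the computations of Section~\ref{sCDG} give that $\wp_\infty$ is an isomorphism, so the composition $\cC(\fn)\hookrightarrow\cT(\fn)\to\Phi_\infty(\fn)$ is an isomorphism; consequently $\cT(\fn)=\cC(\fn)\oplus R$ with $R:=\ker r$. As $R$ reduces into the maximal torus of the special fibre at $\infty$, which is split and so has $\F_q^\times$-valued points in each coordinate, $R$ is killed by $q-1$; in particular $R$ is prime to $p$. It remains to prove $R=0$.

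Since $R$ consists of prime-to-$p$ torsion points of $J_0(\fn)(F)$, the Eichler--Shimura congruence relation $\Frob_\fp^2-T_\fp\Frob_\fp+|\fp|=0$ on $J_0(\fn)$ over the residue field at a prime $\fp\nmid\fn$, together with injectivity of reduction on prime-to-$p$ torsion and the fact that an $F$-rational point reduces to a $\Frob_\fp$-fixed point, shows that $T_\fp$ acts on $R$ as $|\fp|+1$ for all such $\fp$; that is, $R$ is annihilated by $\fE(\fn)$. Hence the $\T(\fn)$-action on $R$ factors through $\T(\fn)/\fE(\fn)$, and the image of $q-1\in\Z$ in this ring annihilates $R$. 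For $\fn=T^3$, Theorem~\ref{thmMAIN1} gives $\T(\fn)/\fE(\fn)\cong\Z/q^2\Z$, in which $q-1$ is a unit because $\gcd(q-1,q^2)=1$; therefore $R=0$, and $\cT(T^3)=\cC(T^3)$.

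The case $\fn=T^2(T-1)$ is where the real difficulty lies: now $\T(\fn)/\fE(\fn)\cong\Z/q(q^2-1)\Z$, in which $q-1$ is a zero divisor, so the preceding argument only bounds $|R|$ by a divisor of $q-1$. To remove this residual Eisenstein torsion — which, by construction, is invisible under reduction at $\infty$ — I would bring in the finite primes dividing $\fn$: since $\deg(T^2)=2$, the Jacobian $J_0(T^2)$ is trivial, so $J_0(T^2(T-1))$ has purely toric reduction also at $T$ and at $T-1$, and the prime-to-$p$ group $R$ injects into each of these special fibres as well. Combining these local conditions with the Eisenstein property and the explicit structure of $J_0(\fn)[\fE(\fn)]$ — controlled, as in Mazur's analysis of the rational points of $J_0(N)[\fI]$, by the Gorenstein property $\Hom(\cH_0(\fn,\Z),\Z)\cong\T(\fn)$ of Theorem~\ref{thmMAIN4}(ii), with the multiplicative-type (``Shimura'') part contributing no $F$-rational points beyond the cuspidal ones — should force $R=0$. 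Showing that this last residual piece vanishes is the main obstacle; all the other ingredients are either formal or follow from the structure of $\T(\fn)$ and $\T(\fn)/\fE(\fn)$ already established in Sections~\ref{sHA} and~\ref{sec4}.
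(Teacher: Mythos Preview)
Your argument for $\fn=T^3$ is correct and uses the same ingredients as the paper (Eichler--Shimura for the prime-to-$p$ torsion, $\T(\fn)/\fE(\fn)\cong\Z/q^2\Z$, and specialization at $\infty$), though organized a bit differently. The paper treats the $p$-part and the prime-to-$p$ part separately: for $\ell\neq p$ it notes that $\cT(\fn)_\ell$ is Eisenstein and that $p$ is the only Eisenstein prime, so $\cT(\fn)_\ell=0$; for the $p$-part it uses the connected-\'etale sequence and Cartier duality to show $\cT(\fn)_p\hookrightarrow\Phi_\infty$. Your version folds both into one step via torsion-freeness of $(1+\fm_\infty)^g$, then kills the complement $R$ by observing that $q-1$ is a unit in $\Z/q^2\Z$. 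Either way works.

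For $\fn=T^2(T-1)$ there is a concrete error and a gap. The claim that $J_0(T^2(T-1))$ has purely toric reduction at $T$ is false: since $T^2\mid\fn$, the model at $T$ is not of Deligne--Rapoport type. As the paper records (via the function-field analogue of Katz--Mazur), the special fibre over $\cO_T$ consists of three projective lines meeting in a single point, one with multiplicity $q-1$, and the reduction of $J_0(\fn)$ at $T$ is purely \emph{additive}, with $\#\Phi_T=q^2-1$. So the toric-reduction premise underlying your proposed Mazur-style analysis of $J_0(\fn)[\fE(\fn)]$ is wrong, and you yourself flag that last step as the unresolved obstacle.

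The irony is that the correct reduction type makes the endgame \emph{easier}, not harder. A unipotent group has no $\ell$-torsion for $\ell\neq p$, so the N\'eron mapping property gives $\cT(\fn)_\ell\hookrightarrow\Phi_T$ for every $\ell\neq p$, whence $\#\cT(\fn)_\ell\leq\#(\Z/(q^2-1)\Z)_\ell=\#\cC(\fn)_\ell$. Combined with your (correct) treatment of the $p$-part at $\infty$, this gives $\cT(\fn)=\cC(\fn)$ immediately. This is precisely the paper's argument; no analysis of $J_0(\fn)[\fE(\fn)]$ or of a Shimura-type subgroup is needed.
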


\begin{rem}
In \cite{PW}, we proved that $\cT(\fn)=\cC(\fn)$ also for case (4), and, the equality 
$\cT(\fn)=\cC(\fn)$ in case (3) is a special case of P\'al's result. Thus, only case (5) remains open. 
\end{rem}

We say that a prime number $\ell\in \N$ is an \textit{Eisenstein prime number} for $\fn$ if $\ell$ divides the order of $\T(\fn)/\fE(\fn)$. 
When $\deg(\fn) = 3$, we observe that $p$ (= the characteristic of $F$) is an Eisenstein prime number if $\fn$ is not square-free. 
Moreover, there is a cuspidal divisor which is $F$-rational and has order divisible by $p$.  
In Section \ref{sEPN}, we prove that this is actually a special case of the following:

\begin{thm}\label{thmMAIN8}
Assume $\fn$ is divisible by $\fp^2$ for some prime $\fp\lhd A$, but $\fn\neq \fp^2$. Then 
there is a cuspidal divisor in $\cC(\fn)$ which is Eisenstein, rational over $F$, and has order divisible by $p$. 
This implies that $p$ is an Eisenstein prime number for $\fn$. 
\end{thm}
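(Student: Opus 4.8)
The plan is to exhibit a single cuspidal divisor with the three asserted properties; granting this, the statement that $p$ is an Eisenstein prime number for $\fn$ is purely formal, and I treat it at the end. Write $\fp^e\|\fn$, so the hypothesis reads $e\geq 2$, together with $\fn/\fp^2\neq A$ when $e=2$. I would first recall Gekeler's description of the cusps of $X_0(\fn)$: they correspond to pairs $(\fd,[x])$ with $\fd\mid\fn$ monic and $[x]\in (A/{\mathfrak g})^\times/\F_q^\times$, where ${\mathfrak g}=\gcd(\fd,\fn/\fd)$; the cusp $(\fd,[x])$ is rational over the appropriate subfield of the cyclotomic function field $F(\Lambda_{{\mathfrak g}})$, and $\Gal(F^\sep/F)$ permutes the cusps of a fixed type $\fd$. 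In particular $[0]$ and $[\infty]$ are $F$-rational, and for every $\fd\mid\fn$ the full fibre $c_\fd:=\sum_{[x]}(\fd,[x])$ is an $F$-rational divisor. I will work inside the subgroup $\cC'(\fn)\subseteq\cC(\fn)$ generated by the classes of the degree-zero divisors supported on $\{c_\fd\mid\fd\mid\fn\}$; since every such divisor is $F$-rational, so is its class, and it will suffice to find in $\cC'(\fn)$ an element that is Eisenstein and of order divisible by $p$.

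Next I would dispose of the Eisenstein property. For a prime $\fl\nmid\fn$ the Hecke correspondence $T_\fl$ preserves the type of each cusp, and on the rank-one summand $\Z\,c_\fd$ it acts as multiplication by $|\fl|+1$: at a cusp of type $\fd$ the degree-$\fl$ isogeny correspondence contributes one quotient of type $\fd$ with multiplicity one, together with a further family of quotients of type $\fd$ with total multiplicity $|\fl|$, and this family permutes the fibre $\{[x]\}$ in an $\F_q^\times$-equivariant manner, so summing over the fibre gives $(1+|\fl|)c_\fd$. Hence $T_\fl$ acts as $|\fl|+1$ on the entire subgroup spanned by the $c_\fd$, and every element of $\cC'(\fn)$ lies in $\cC(\fn)[\fE(\fn)]$. (This is essentially the cusp computation already used in \cite{Pal} and \cite{PW}.)

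The core of the proof is the computation of $\cC'(\fn)$, and this is where I expect the main obstacle to lie. Using Gekeler's description of the modular units on $X_0(\fn)$ — those whose divisor is constant along each fibre $\{(\fd,[x])\}_{[x]}$ are, up to constants and up to an index prime to $p$, exactly the products of quotients $\Delta_\fa/\Delta_\fb$ of Drinfeld discriminant functions $\Delta_\fa(z)=\Delta(\fa z)$ with $\fa,\fb\mid\fn$ — one identifies $\cC'(\fn)$ with the cokernel, on the degree-zero sublattice of $\bigoplus_{\fd\mid\fn}\Z\,c_\fd$, of the order matrix $M=\big(\ord_{(\fd,\ast)}(\Delta_\fa)\big)_{\fa,\fd\mid\fn}$, whose entries are given by Gekeler's explicit formula as a fixed rational multiple with coprime-to-$p$ denominator of $|\gcd(\fa,\fd)|^2\,|\fn|\,\big/\,\big(|\fa|\,|\fd|\,|{\mathfrak g}|\big)$. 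Thus the entries of $M$ are assembled from the prime powers $|\fp|=q^{\deg\fp}\equiv 0\pmod p$ and from the coprime-to-$p$ factors $|\fp^{j}|-1$. Reducing $M$ modulo $p$ and comparing the rows indexed by $\fa$ and $\fp\fa$ for $0\leq v_\fp(\fa)\leq e-1$ — in each column of type $\fd$ their entries differ by a factor $|\fp|^{\pm 1}$, which vanishes mod $p$ exactly when $\fp\mid{\mathfrak g}=\gcd(\fd,\fn/\fd)$ — one finds that $M$ becomes singular mod $p$ on the degree-zero lattice, i.e. $p\mid\#\cC'(\fn)$. The delicate point, which is the real obstacle, is to run this $\F_p$-linear-algebra argument cleanly in general: one must check that the relevant minors of $M$ really retain a factor of $|\fp|$ after the degree-zero restriction and after accounting for the coprime-to-$p$ normalization in the modular-unit lattice, and it is precisely the hypothesis $\fn\neq\fp^2$ that supplies the needed slack — it produces a divisor $\fd\mid\fn$ with $1\leq v_\fp(\fd)\leq e-1$ other than the degenerate $\fd=\fp$ that is forced when $\fn=\fp^2$ (a second power of $\fp$ if $e\geq 3$, and a divisor $\fp\fl$ with $\fl\mid\fn/\fp^2$ if $e=2$). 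Granting this, $\cC'(\fn)$ is a finite abelian group whose order is divisible by $p$, and I take $D$ to be a generator of a cyclic factor of $\cC'(\fn)$ of $p$-power order: by construction $D$ is represented by an $F$-rational cuspidal divisor, it is Eisenstein by the previous paragraph, and it has order divisible by $p$.

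Finally, the implication. The map $t\mapsto tD$ is a surjection of finite abelian groups $\T(\fn)\twoheadrightarrow\T(\fn)\cdot D\subseteq J_0(\fn)$; it annihilates $\fE(\fn)$ because $D$ is Eisenstein, so it factors through the finite ring $\T(\fn)/\fE(\fn)$ (Lemma \ref{lemTE0}). Since $D$ has order divisible by $p$, so does $\T(\fn)\cdot D$, and as the latter is a quotient of $\T(\fn)/\fE(\fn)$ we get $p\mid\#\big(\T(\fn)/\fE(\fn)\big)$; that is, $p$ is an Eisenstein prime number for $\fn$.
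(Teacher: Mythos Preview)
Your approach is genuinely different from the paper's. The paper does not attempt a uniform cokernel computation. Instead it first reduces to the two minimal levels $\fn=\fp^3$ and $\fn=\fp^2\fq$ (with $\fq\neq\fp$ prime) by showing that the pullback $f^*:J_0(\fm)\to J_0(\fn)$ along the degeneracy map has no $p$-torsion in its kernel; this uses the rigid-analytic uniformization of $J_0(\fn)$ and the fact that the transfer map on $\overline{\Gamma}$'s has torsion-free cokernel. For $\fn=\fp^2\fq$ the argument is then purely geometric: the specialization $\wp_\fq(c_0)$, where $c_0=[0]-[\infty]$, equals the class $Z-Z'$ of the two components of $X_0(\fn)_{\F_\fq}$, and this has order divisible by $p$ by the known structure of $\Phi_\fq$. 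For $\fn=\fp^3$ the paper exhibits the single divisor identity
\[
\mathrm{div}\!\left(\Bigl(\tfrac{\Delta_\fp}{\Delta}\Bigr)^{|\fp|}\cdot\tfrac{\Delta_\fp}{\Delta_{\fp^2}}\right)=(|\fp|^2-1)(q-1)\,|\fp|\cdot c,
\]
and argues by contradiction: if $p\nmid\mathrm{ord}(c)$ then $\Delta_\fp/\Delta_{\fp^2}$ is a $|\fp|$-th power in $\cO(\Omega)^\times$, contrary to Gekeler's bound \cite[Cor.~3.5]{Discriminant} that the root-index of $\Delta/\Delta_a$ divides $(q-1)(q^2-1)$.

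Your route is more uniform in spirit, but there are two gaps. The first you acknowledge: the $\F_p$-singularity of the order matrix on the degree-zero lattice is only sketched, and the row comparison you describe does not by itself control the minors after restricting to degree zero. The second you do not flag: identifying $\cC'(\fn)$ with the cokernel of the $\Delta$-matrix requires that every modular unit with fibre-constant divisor lie, up to a root of $p$-prime degree, in the group generated by the $\Delta_\fa/\Delta_\fb$. Without this, the cokernel of your $M$ only \emph{surjects} onto $\cC'(\fn)$, and $p\mid\det M$ would be inconclusive. I do not believe this index statement is available in the literature for general $\fn$; the paper's $\fp^3$ argument was designed precisely to avoid it, using only the root constraint on a single $\Delta$-ratio rather than any claim about the full unit lattice. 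If you can either supply that index bound or rephrase your argument to give a direct lower bound on the order of one explicit divisor (as the paper does), your approach would go through and would have the pleasant feature of treating all cases at once, without N\'eron models or component groups.
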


\begin{rem}
In general, $\cC(\fn)$ is neither Eisenstein nor rational over $F$; see Example \ref{exmP2}. 
This example also shows that the assumption $\fn\neq \fp^2$ is necessary in Theorem \ref{thmMAIN8}. 
\end{rem}

Let $\T(\fn)'$ be the quotient of $\T(\fn)^0$ through which $\T(\fn)^0$ acts on the new quotient $J_0(\fn)^\new$ of $J_0(\fn)$. 
Let $\fE(\fn)'$ be the ideal of $\T(\fn)'$ generated by the images of elements $T_\fp-|\fp|-1$, 
where $\fp\nmid \fn$ is prime. In \cite{PalIJNT}, P\'al proved that $p$ does not divide the order of 
$\T(\fn)'/\fE(\fn)'$ if $\fn$ is square-free. If $\deg(\fn)=3$, then $J_0(\fn)^\new= J_0(\fn)$, so $\T(\fn)'=\T(\fn)^0$. 
 Moreover, by Theorem \ref{thmMAIN4}, $\T(\fn)^0=\T(\fn)$ when $\fn$ is not square-free. Thus, $\T(\fn)'=\T(\fn)$ 
 when $\fn$ is not square-free, and Theorem \ref{thmMAIN8} shows that $p$ divides $\T(\fn)'/\fE(\fn)'$. 
 In particular, the assumption that $\fn$ is square-free is necessary in \cite{PalIJNT}. 
 
 The results of this paper raise two interesting questions, which we are not able to answer for the present:
 
 \begin{enumerate}
 \item[(a)] Is it true that for general non-zero $\fn\lhd A$, there is an isomorphism 
 $$
 \T(\fn)/\fE(\fn)\cong \Phi_\infty[\fE(\fn)]?
 $$
 \end{enumerate}
 Theorem \ref{thmMAIN1} shows that this is true for $\deg(\fn)=3$, and Theorem \ref{thmEispnot} 
 shows that this is true if $\fn$ is prime. Also, Example \ref{exmP2} shows 
 that this is true for $q=2$ and $\fn=(T^2+T+1)^2$. 
 \begin{enumerate}
 \item[(b)] Is it true that $p$ is \textbf{not} an Eisenstein prime number for $\fn=\fp^2$, where $\fp\lhd A$ 
 is an arbitrary prime ideal? 
 \end{enumerate}
 Example \ref{exmP2} shows 
 that this is true for $q=2$ and $\fn=(T^2+T+1)^2$. Theorem \ref{thmMAIN8} shows that $p$ is an Eisenstein prime number 
 for any non square-free $\fn\neq \fp^2$. 

 \subsection*{Acknowledgements} We gratefully dedicate this paper to Winnie Li for her advice and encouragement 
 over the years. Part of this work was carried out during the Workshop on Function Field Arithmetic held at the 
 Nesin Mathematics Village in 2014. We thank the organizers of the workshop for their invitation, and the staff 
 of the village for creating a welcoming atmosphere.  


\section{Preliminaries}\label{sAL} 

\subsection{Notation} 
Besides $\infty$, the other places of $F$ are in bijection with the non-zero prime ideals of $A$.  
Given a place $v$ of $F$, we denote by $F_v$ the completion of $F$ at $v$, by $\cO_v$ 
the ring of integers of $F_v$, and by $\F_v$ the residue field of $\cO_v$. 
We fix $\pi:=T^{-1}$ as a uniformizer of $\cO_\infty$. 

Let $R$ be a commutative ring with unity. We denote by $R^\times$ the group of 
multiplicative units of $R$. Let 
$\GL_n(R)$ be the group of $n\times n$ matrices over $R$ whose determinant is in $R^\times$, and $Z(R)\cong R^\times$ 
the subgroup of  $\GL_n(R)$ consisting of scalar matrices. 

If $X$ is a scheme over a base $S$ and $S'\to S$ any base change, $X_{S'}$ 
denotes the pullback of $X$ to $S'$. If $S'=\Spec(R)$ is affine, 
we may also denote this scheme by $X_R$. By $X(S')$ we mean the $S'$-rational points 
of the $S$-scheme $X$, and again, if $S'=\Spec(R)$, we may also denote this set by $X(R)$. 

Given an abelian group $H$ and an integer $n$, $H[n]$ is the kernel of multiplication by 
$n$ in $G$. For a prime number $\ell$, $H_\ell$ is the $\ell$-primary component of $H$. 

Given an ideal $\fn\lhd A$, by abuse of notation, we denote by the same symbol the unique 
monic polynomial in $A$ generating $\fn$. It will always be clear from the context 
in which capacity $\fn$ is used; for example, if $\fn$ appears in a matrix, column vector, or a 
polynomial equation, then the monic polynomial is implied. 
The prime ideals $\fp\lhd A$ are always assumed to be non-zero. 
The notation $\fm\parallel \fn$ means that $\fm$ divides $\fn$ and $\gcd(\fm, \fn/\fm)=1$. 


\subsection{Harmonic cochains} 
Let $G$ be an oriented connected graph in the sense of Definition 1 of $\S$2.1 in \cite{SerreT}. 
We denote by $V(G)$ and $E(G)$ its sets of vertices and edges, respectively. 
For an edge $e\in E(G)$, let $o(e)$, $t(e)\in V(G)$ and $\bar{e}\in E(G)$ be its 
origin, terminus and inversely oriented edge, respectively. In particular, $t(\bar{e})=o(e)$ 
and $o(\bar{e})=t(e)$. We will assume that 
for any $v\in V(G)$ the number of edges with $t(e)=v$ is finite,  
and $\bar{e}\neq e$ for any $e\in E(G)$.  A \textit{path} in $G$ is a 
sequence of edges $\{e_i\}_{i\in I}$ indexed by a set $I$ where $I=\Z$, $I=\N$ or 
$I=\{1,\dots, m\}$ for some $m\in \N$ such that $t(e_i)=o(e_{i+1})$ for every 
$i, i+1\in I$.  We say that the path is \textit{without backtracking} if $e_i\neq \bar{e}_{i+1}$ 
for every $i, i+1\in I$. We say that the path without backtracking $\{e_i\}_{i\in \N}$ is a \textit{half-line} 
if for every vertex $v$ of $G$ there is at most one index $n\in \N$ 
such that $v=o(e_n)$. 

Let $\G$ be a group acting on a graph $G$. We say that $\G$ acts with \textit{inversion} if there is
$\gamma\in \G$ and $e\in E(G)$ such that $\gamma e=\bar{e}$. If
$\G$ acts without inversion, then we have a natural quotient graph
$\G\bs G$ such that $V(\G\bs G)=\G\bs V(G)$ and
$E(\G\bs G)=\G\bs E(G)$, cf. \cite[p. 25]{SerreT}.

\begin{defn}
Let $R$ be a commutative ring with unity. An $R$-valued \textit{harmonic cochain} on $G$ is a 
function $f: E(G)\to R$ that satisfies 
\begin{itemize}
\item[(i)] $$f(e)+f(\bar{e})=0\quad \text{for all $e\in E(G)$},$$
\item[(ii)] 
$$\sum_{\substack{e\in E(G) \\ t(e)=v}} f(e)=0\quad \text{for all $v\in V(G)$}.$$
\end{itemize}
Denote by $\cH(G, R)$ the group of $R$-valued harmonic cochains on $G$.
\end{defn}

The most important graphs in this paper are the Bruhat-Tits tree $\sT$ of $\PGL_2(\Fi)$, and the 
quotients of $\sT$. We recall the definition and introduce some notation for later use. 
The sets of vertices $V(\sT)$ and edges $E(\sT)$ are the cosets $\GL_2(\Fi)/Z(\Fi)\GL_2(\cO_\infty)$ 
and $\GL_2(\Fi)/Z(\Fi)\cI_\infty$, respectively, where $\cI_\infty$ is the Iwahori group:
$$
\cI_\infty=\left\{\begin{pmatrix} a & b\\ c & d\end{pmatrix}\in \GL_2(\cO_\infty)\ \bigg|\ c\in \pi\cO_\infty\right\}. 
$$
The matrix $\begin{pmatrix} 0 & 1\\ \pi & 0\end{pmatrix}$ 
normalizes $\cI_\infty$, so the multiplication from the right by this matrix on $\GL_2(\Fi)$ 
induces an involution on $E(\sT)$; this involution is $e\mapsto \bar{e}$. 
The matrices 
\begin{equation}\label{eq-setM}
E(\sT)^+=\left\{\begin{pmatrix} \pi^k & u \\ 0 & 1\end{pmatrix}\ \bigg|\
\begin{matrix} k\in \Z\\ u\in \Fi,\ u\ \mod\ \pi^k\cO_\infty\end{matrix}\right\}
\end{equation}
are in distinct left cosets of $\cI_\infty Z(\Fi)$, and there is a disjoint decomposition  
$$
E(\sT)=E(\sT)^+\bigsqcup E(\sT)^+\begin{pmatrix} 0 & 1\\ \pi & 0\end{pmatrix}. 
$$
We call the edges in $E(\sT)^+$ \textit{positively oriented}. 
The group $\GL_2(\Fi)$ naturally acts on $E(\sT)$ by left multiplication. 
This induces an action on the group of $R$-valued functions on $E(\sT)$: 
for a function $f$ on $E(\sT)$ and $\gamma\in \GL_2(\Fi)$ we define the function $f|\gamma$ on $E(\sT)$ by 
$(f|\gamma)(e)=f(\gamma e)$. 
It is clear from the definition that $f|\gamma$ is harmonic if $f$ is harmonic. 

Let $\G$ be a subgroup of $\GL_2(\Fi)$ which acts on $\sT$ without inversions. 
Denote by $\cH(\sT, R)^\G$ the subgroup of $\G$-invariant harmonic cochains, i.e.,  
$f|\gamma=f$ for all $\gamma\in \G$.
It is clear that $f\in \cH(\sT, R)^\G$ defines a function $f'$ on the quotient graph $\G\bs\sT$, and 
$f$ itself can be uniquely recovered from this function: if $e\in E(\sT)$ maps to $\tilde{e}\in E(\G\bs \sT)$ under the quotient map, 
then $f(e)=f'(\tilde{e})$. The conditions of harmonicity (i) and (ii) can be formulated 
in terms of $f'$ as follows. Since $\G$ acts without inversion, (i) is equivalent to 
\begin{itemize}
\item[(i$'$)] 
$$
f'(\tilde{e})+f'(\bar{\tilde{e}})=0\quad \text{for all $\tilde{e}\in E(\G\bs\sT)$}.
$$
\end{itemize}
Let $v\in V(\sT)$ and $\tilde{v}\in V(\G\bs\sT)$ be its image. The stabilizer group 
$$\G_v=\{\gamma\in \G\ |\ \gamma v=v\}$$  acts on the set $\{e\in E(\sT)\ |\ t(e)=v\}$, 
and the orbits correspond to $$\{\tilde{e}\in E(\G\bs \sT)\ |\ t(\tilde{e})=\tilde{v}\}.$$ 
Let $\G_e:=\{\gamma\in \G\ |\ \gamma e=e\}$; clearly $\G_e$ is a subgroup of $\G_{t(e)}$. 
The \textit{weight} of $e$ 
$$
w(e):=[\G_{t(e)}:\G_e]
$$
is the length of the orbit corresponding to $e$. Since $w(e)$ depends only on its image $\tilde{e}$ in $\G\bs \sT$, 
we can define $w(\tilde{e}):=w(e)$. Note that 
$\sum_{t(\tilde{e})=\tilde{v}} w(\tilde{e}) = q+1$. 
(In general, $w(e)$ depends on the orientation, i.e., $w(e)\neq w(\bar{e})$.) 
With this notation, condition (ii) is equivalent to 
\begin{itemize}
\item[(ii$'$)] 
$$
\sum_{\substack{\tilde{e}\in E(\G\bs\sT) \\ t(\tilde{e})=\tilde{v}}} w(\tilde{e})f'(\tilde{e})=0\quad 
\text{for all $\tilde{v}\in V(\G\bs\sT)$}.
$$
\end{itemize}

\begin{defn} The group of $R$-valued \textit{cuspidal 
harmonic cochains} for $\G$, denoted $\cH_0(\sT, R)^\G$, is the 
subgroup of $\cH(\sT, R)^\G$ consisting of functions 
which have compact support as functions on $\G\bs\sT$, i.e., functions 
which have value $0$ on all but finitely many edges of $\G\bs\sT$.  
\end{defn}

To simplify the notation, we put 
\begin{align*}
\cH(\fn, R)&:=\cH(\sT, R)^{\G_0(\fn)}\\
\cH_0(\fn, R)&:=\cH_0(\sT, R)^{\G_0(\fn)} \\
\cH_{00}(\fn, R)&:=\text{ the image of }\cH_0(\fn, \Z)\otimes R \text{ in } \cH_0(\fn, R). 
\end{align*}

It is known that the quotient 
graph $\G_0(\fn)\bs \sT$ is the edge disjoint union 
$$
\G_0(\fn)\bs \sT = (\G_0(\fn)\bs \sT)^0\cup \bigcup_{s\in \G_0(\fn)\bs \p^1(F)} h_s
$$
of a finite graph $(\G_0(\fn)\bs \sT)^0$ with a finite number of half-lines $h_s$, called \textit{cusps}. 
The cusps are in bijection with the orbits of the natural (left) action of $\G_0(\fn)$ on $\p^1(F)$; cf. \cite[(2.6)]{GR}. 
It is clear that $f\in \cH(\fn, R)$ is cuspidal if and only if it eventually vanishes on each $h_s$. 
It is also clear that if $R$ is flat over $\Z$, then $\cH_0(\fn, R)=\cH_{00}(\fn, R)$. 
On the other hand, it is easy to construct examples where this equality does not hold; cf. \cite[$\S$1.1]{PW}. 

One can show that $\cH_0(\fn, \Z)$ and $\cH(\fn, \Z)$ are finitely generated free 
$\Z$-modules of rank $g(\fn)$ and  $g(\fn)+c(\fn)-1$, respectively, where $g(\fn)$ 
is the genus of $X_0(\fn)$ and $c(\fn)$ is the number of cusps. 

\subsection{Hecke operators and the Eisenstein ideal}\label{ssHOEI}
Fix a non-zero ideal $\fn\lhd A$. Given a non-zero ideal $\fm\lhd A$, define 
an $R$-linear transformation of the space of $R$-valued functions on $E(\sT)$ by 
\begin{equation}\label{eqDefTm}
f|T_\fm=\sum f|\begin{pmatrix} a & b \\ 0 & d\end{pmatrix},
\end{equation}
where the sum is over $a,b,d\in A$ such that $a,d$ are monic, $(ad)=\fm$, $(a)+\fn=A$, and $\deg(b)< \deg(d)$. 
This transformation is the \textit{$\fm$-th Hecke operator}. Following a common convention, 
for a prime divisor $\fp$ of $\fn$ we sometime write $U_\fp$ instead of $T_\fp$. 

\begin{prop}\label{propHOm} The Hecke operators preserve $\cH_0(\fn, R)$, and satisfy the 
recursive formulas: 
\begin{align*}
T_{\fm\fm'}&= T_\fm T_{\fm'}\quad \text{if}\quad  \fm+\fm'=A,\\
T_{\fp^i} &= T_{\fp^{i-1}}T_\fp-|\fp|T_{\fp^{i-2}}\quad \text{if}\quad  \fp\nmid \fn, \\
T_{\fp^i} &= T_\fp^i\quad \text{if}\quad  \fp| \fn. 
\end{align*}
\end{prop}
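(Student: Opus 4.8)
The plan is to verify the three formulas directly from the matrix definition \eqref{eqDefTm}, using the standard combinatorics of cosets of upper-triangular matrices. First I would check that $T_\fm$ preserves $\cH_0(\fn,R)$: harmonicity conditions (i) and (ii) are stable under the right-translation action $f\mapsto f|\gamma$ for any $\gamma\in\GL_2(\Fi)$ (this is remarked in the text), hence under the finite sum defining $T_\fm$; $\G_0(\fn)$-invariance follows because left multiplication by $\G_0(\fn)$ permutes the coset representatives $\begin{pmatrix} a & b\\ 0 & d\end{pmatrix}$ appearing in the sum, up to elements of $\G_0(\fn)$ itself — here one uses the condition $(a)+\fn=A$ to solve the relevant congruences, exactly as in the classical case. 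Compact support on $\G_0(\fn)\bs\sT$ is preserved since the image of a finite-support cochain under a finite sum of translates still has finite support.

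Next I would prove the multiplicativity $T_{\fm\fm'}=T_\fm T_{\fm'}$ when $\fm+\fm'=A$. This is the assertion that the set of representatives $\begin{pmatrix} a & b\\ 0 & d\end{pmatrix}$ with $(ad)=\fm\fm'$, $a,d$ monic, $\deg b<\deg d$, $(a)+\fn=A$, is in bijection (as a set of left cosets modulo upper-triangular integral matrices with unit diagonal) with the product set coming from $\fm$ and $\fm'$ separately. Coprimality of $\fm$ and $\fm'$ gives unique factorizations $a=a_1a_2$, $d=d_1d_2$ with $(a_1d_1)=\fm$, $(a_2d_2)=\fm'$, and the Chinese Remainder Theorem matches up the $b$'s; the condition $(a)+\fn=A$ splits as $(a_1)+\fn=A$ and $(a_2)+\fn=A$. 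This is a bookkeeping argument with no real obstacle.

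For the recursion $T_{\fp^i}=T_{\fp^{i-1}}T_\fp-|\fp|T_{\fp^{i-2}}$ when $\fp\nmid\fn$, I would compute $T_{\fp^{i-1}}T_\fp$ by composing the two sums of matrices and regrouping according to the valuation of the product at $\fp$. Multiplying a representative for $T_{\fp^{i-1}}$ by one for $T_\fp$ produces an upper-triangular matrix whose determinant generates $\fp^i$; after reducing to a standard representative one finds each representative for $T_{\fp^i}$ arising once, plus extra terms where "cancellation'' occurs, and these extra terms assemble — using that $\fp$ is invertible mod $\fn$ and that there are $|\fp|$ choices of the relevant parameter — into $|\fp|\cdot T_{\fp^{i-2}}$ applied via the matrix $\begin{pmatrix}\fp & 0\\ 0 & \fp\end{pmatrix}$, which acts trivially on harmonic cochains since it is scalar. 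The case $\fp\mid\fn$, i.e. $T_{\fp^i}=T_\fp^i$, is the degenerate version of the same computation: now the condition $(a)+\fn=A$ forces $a$ to be a unit in every representative, so in the product $U_\fp^{i-1}U_\fp$ no cancellation term can occur (the "$a$-part'' would be divisible by $\fp\mid\fn$), and the representatives match up exactly. The main obstacle is purely organizational: setting up the composition of the two coset sums carefully enough that the regrouping by $\fp$-valuation is transparent, and correctly identifying which representatives are "new'' and which form the scalar correction term — essentially transcribing Mazur's classical computation \cite{Mazur} into the function field setting, which presents no genuinely new difficulty.
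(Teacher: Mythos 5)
Your proposal is correct and is essentially the paper's own argument: the paper simply remarks that the group-theoretic proofs of the classical analogue (cf.\ Miyake, \S 4.5) carry over to this setting, and what you write out is exactly that double-coset computation — preservation of harmonicity, invariance and cuspidality, multiplicativity via coprime factorization, the $T_{\fp^i}$ recursion with the scalar matrix acting trivially, and the degenerate case $\fp\mid\fn$ where $(a)+\fn=A$ forces $a=1$.
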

\begin{proof} The group-theoretic proofs of the analogous statement for the Hecke operators 
acting on classical modular forms work also in this setting; cf. \cite[$\S$4.5]{Miyake}. 
\end{proof}

\begin{defn}\label{defHA}
Let $\T(\fn)$ be the commutative $\Z$-subalgebra of $\End_\Z(\cH_0(\fn, \Z))$ 
generated by all Hecke operators. Let $\T(\fn)^0$ be the subalgebra of $\T(\fn)$ generated 
by the Hecke operators $T_\fm$ with $\fm$ coprime to $\fn$. 
\end{defn}

\begin{defn}
For every ideal $\fm\parallel \fn$, let $W_\fm$ be any matrix of the form 
\begin{equation}\label{ALmatrix}
\begin{pmatrix} a\fm & b \\ c\fn & d\fm \end{pmatrix} 
\end{equation}
such that $a,b,c,d, \in A$, and the ideal generated by $\det(W_\fm)$ in $A$ is $\fm$. 
\end{defn}

It is not hard to check that for $f\in \cH_0(\fn, \Z)$, $f|W_\fm$ does not depend on the choice 
of the matrix for $W_\fm$ and  $f|W_\fm\in \cH_0(\fn, \Z)$. Moreover, 
as $\Z$-linear endomorphisms of $\cH(\fn, \Z)$, $W_\fm$'s satisfy 
\begin{equation}\label{eqWs}
W_{\fm_1}W_{\fm_2}=W_{\fm_3}, \quad \text{where} \quad \fm_3=
\frac{\fm_1\fm_2}{\mathrm{gcd}(\fm_1, \fm_2)^2}. 
\end{equation}
Therefore, the matrices $W_\fm$ acting on $\cH_0(\fn, \Z)$ 
generate an abelian group $\W\cong (\Z/2\Z)^s$, called the group of \textit{Atkin-Lehner involutions}, 
where $s$ is the number of distinct prime divisors of $\fn$. 

\begin{lem}\label{lemUW} Let $f\in \cH_0(\fn, \Z)$. 
\begin{enumerate}
\item If $\fp^2\mid \fn$, then $f|U_\fp\in \cH_0(\fn/\fp, \Z)$. 
\item If $\fp\parallel \fn$, then $f|(U_\fp+W_\fp)\in \cH_0(\fn/\fp, \Z)$. 
\end{enumerate}
\end{lem}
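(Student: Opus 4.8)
\textbf{Proof proposal for Lemma \ref{lemUW}.}

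The plan is to reduce everything to a concrete statement about which matrices are needed to compute $f|U_\fp$, and then to detect $\G_0(\fn/\fp)$-invariance from that explicit formula. Recall that by \eqref{eqDefTm},
$$
f|U_\fp=\sum_{\deg(b)<\deg(\fp)} f\Big|\begin{pmatrix} 1 & b\\ 0 & \fp\end{pmatrix},
$$
the sum running over the $|\fp|$ polynomials $b\in A$ of degree less than $\deg(\fp)$ (here I use that $\fp\mid\fn$, so in the definition of $T_\fp$ the term with $a=\fp$, $d=1$ is excluded because $(\fp)+\fn\neq A$). Write $g:=f|U_\fp$. Since $f$ is already $\G_0(\fn)$-invariant, and $\G_0(\fn)$ is normal in neither group, the cleanest route is: show directly that $g|\gamma=g$ for every $\gamma\in\G_0(\fn/\fp)$ by exhibiting, for each $\gamma$ and each coset representative $\begin{pmatrix}1&b\\0&\fp\end{pmatrix}$, an element $\delta(\gamma,b)\in\G_0(\fn)$ and another representative $\begin{pmatrix}1&b'\\0&\fp\end{pmatrix}$ with
$$
\begin{pmatrix}1&b\\0&\fp\end{pmatrix}\gamma=\delta(\gamma,b)\begin{pmatrix}1&b'\\0&\fp\end{pmatrix},
$$
so that $b\mapsto b'$ permutes the representatives and the sum is unchanged. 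For $\gamma=\begin{pmatrix}\alpha&\beta\\\gamma_0 \fn/\fp&\delta\end{pmatrix}$ with $\gamma_0\in A$, one computes $\begin{pmatrix}1&b\\0&\fp\end{pmatrix}\gamma=\begin{pmatrix}\alpha+b\gamma_0\fn/\fp & \beta+b\delta\\ \fp\gamma_0\fn/\fp & \fp\delta\end{pmatrix}=\begin{pmatrix}\alpha+b\gamma_0\fn/\fp & \ast\\ \gamma_0\fn & \fp\delta\end{pmatrix}$; the lower-left entry is now divisible by $\fn$. The key point in part (1) is that $\fp^2\mid\fn$ forces $\fp\mid\fn/\fp$, hence $\fp\mid(\alpha+b\gamma_0\fn/\fp)$ exactly when $\fp\mid\alpha$; but that need not hold, so instead I factor out on the left a matrix $\begin{pmatrix}1&0\\ c\fn & 1\end{pmatrix}\in\G_0(\fn)$ (choosing $c$ appropriately) together with a unipotent upper-triangular and diagonal piece, checking that the leftover is again of the form $\begin{pmatrix}1&b'\\0&\fp\end{pmatrix}$ up to an element of $\G_0(\fn)$ and scalars.

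The honest bookkeeping is: decompose $\begin{pmatrix}1&b\\0&\fp\end{pmatrix}\gamma$ via the standard Bruhat-type argument — since the lower-left entry is in $\fn\subseteq(\fp)$ and the determinant is a unit times $\fp$, one can write this matrix as $\delta\cdot z\cdot\begin{pmatrix}1&b'\\0&\fp\end{pmatrix}$ with $\delta\in\G_0(\fn)$, $z\in Z(\Fi)$ (scalar, hence acts trivially on $\sT$), and $b'$ uniquely determined mod $(\fp)$. Establishing that $b\mapsto b'$ is a bijection on $A/(\fp)$ then gives $g|\gamma=g$, i.e. $g\in\cH(\fn/\fp,\Z)$. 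Cuspidality of $g$ is automatic: $g=f|U_\fp$ is a $\Z$-linear combination of translates of the compactly-supported $f$, hence compactly supported on $\G_0(\fn)\bs\sT$ and a fortiori on the (finite-index) quotient $\G_0(\fn/\fp)\bs\sT$. So $g\in\cH_0(\fn/\fp,\Z)$, proving (1). For (2), when $\fp\parallel\fn$ the sum for $U_\fp$ again omits the $a=\fp$ term, but now $\fp\nmid\fn/\fp$, so the above left-factoring fails for the representatives where $\fp\nmid\alpha$; this is precisely the defect that $W_\fp$ is designed to repair. Using the matrix form \eqref{ALmatrix} for $W_\fp$, namely $W_\fp=\begin{pmatrix}a\fp & b\\ c\fn & d\fp\end{pmatrix}$ with $\det$ generating $(\fp)$, one checks that $f|(U_\fp+W_\fp)$ is a sum over a set of matrices $\begin{pmatrix}\ast&\ast\\0&\ast\end{pmatrix}$ and $W_\fp$-type matrices which, after right-multiplication by $\gamma\in\G_0(\fn/\fp)$ and left-adjustment by $\G_0(\fn)$, is permuted to itself: the $U_\fp$-representatives with $\fp\mid\alpha$ go to $U_\fp$-representatives, and those with $\fp\nmid\alpha$ get absorbed into the $W_\fp$ term (and vice versa), the whole collection being stable. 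Concretely one verifies $W_\fp\gamma=\delta' W_\fp\gamma'$ type identities and the analogous mixing relations, using \eqref{eqWs} with $\fm_1=\fm_2=\fp$ to see $W_\fp^2$ acts as the identity on the relevant space.

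I expect the main obstacle to be the combinatorial verification in part (2) that the union of the $U_\fp$-coset representatives and the $W_\fp$-translate is genuinely permuted as a set by right translation by $\G_0(\fn/\fp)$ modulo left translation by $\G_0(\fn)$ — in particular, matching up exactly which old-level representatives map into the $W_\fp$ block and checking no representative is double-counted or lost. This is the standard ``$U_p+W_p$ lowers the level'' computation familiar from the classical theory (cf. \cite[\S4.5]{Miyake}), and the function-field version is formally identical once one works with $\GL_2(\Fi)/Z(\Fi)\cI_\infty$-cosets in place of the classical double cosets; I would organize it as an explicit case distinction on whether $\fp\mid\alpha$ in $\gamma$, carrying the scalar and $\G_0(\fn)$ adjustments along, and invoke Proposition \ref{propHOm} to keep track of composed operators. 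Part (1) should be routine by comparison. As a sanity check on both parts, one can count: the right-hand spaces $\cH_0(\fn/\fp,\Z)$ sit inside $\cH_0(\fn,\Z)$ via the two degeneracy maps, and the claimed images land in the correct sublattice; this also confirms the $\Z$-integrality, which is immediate since all matrices involved have entries in $A$ and all coefficients are $1$.
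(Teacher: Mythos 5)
Your plan is, at bottom, the same double-coset bookkeeping as the paper's proof, but organized less efficiently. The paper observes that $f|D_\fp$, with $D_\fp=\begin{pmatrix}1&0\\0&\fp\end{pmatrix}$, is invariant under the subgroup $\G_0(\fn/\fp,\fp)$ (lower-left entry in $\fn/\fp$, upper-right entry in $\fp$), because $D_\fp\gamma D_\fp^{-1}\in\G_0(\fn)$ for such $\gamma$, and then sums over right coset representatives of $\G_0(\fn/\fp,\fp)$ in $\G_0(\fn/\fp)$: invariance under the big group is then automatic, and the only computation left is to list the representatives — the $|\fp|$ unipotents $\begin{pmatrix}1&b\\0&1\end{pmatrix}$ when $\fp\mid\fn/\fp$, giving exactly $f|U_\fp$, plus one extra representative $\begin{pmatrix}\alpha\fp&1\\\delta\fn/\fp&1\end{pmatrix}$ when $\fp\parallel\fn$, whose contribution is $f|W_\fp$. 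Your route — verifying by hand that right translation by each $\gamma\in\G_0(\fn/\fp)$ permutes the left $\G_0(\fn)$-cosets of the $U_\fp$-matrices (together with the $W_\fp$-coset in case (2)) — proves the same statement, but you must then carry out the matrix factorization for every $\gamma$, and that is exactly where your write-up has real soft spots.

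Concretely: (i) your ``Bruhat-type'' criterion is false as stated. Having lower-left entry in $\fn$ and determinant a unit times $\fp$ does not place a matrix in $\bigcup_{b'}\G_0(\fn)\begin{pmatrix}1&b'\\0&\fp\end{pmatrix}$; for instance $\begin{pmatrix}\fp&0\\0&1\end{pmatrix}$ satisfies both conditions and lies in no such coset. What actually makes case (1) work — and it is precisely the point you dismissed with ``but that need not hold'' — is that for $\gamma=\begin{pmatrix}\alpha&\beta\\\gamma_0\fn/\fp&\delta\end{pmatrix}\in\G_0(\fn/\fp)$ with $\fp\mid\fn/\fp$ one has $\alpha\delta\equiv\det\gamma\in\F_q^\times\ (\mathrm{mod}\ \fp)$, so the upper-left entry $\alpha+b\gamma_0\fn/\fp\equiv\alpha$ of $\begin{pmatrix}1&b\\0&\fp\end{pmatrix}\gamma$ is a unit mod $\fp$; then $b'$ is determined by $\alpha b'\equiv\beta+b\delta\ (\mathrm{mod}\ \fp)$, the complementary factor lands in $\G_0(\fn)$, and $b\mapsto b'$ is a bijection because $\delta$ is also a unit mod $\fp$. (ii) In case (2) the dichotomy is governed by whether $\fp$ divides $\alpha+b\gamma_0\fn/\fp$ — which depends on $b$ as well as $\gamma$, with at most one $b$ per $\gamma$ falling into the $W_\fp$ slot — not by whether $\fp\mid\alpha$. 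With these corrections your argument closes (cuspidality and harmonicity are indeed routine, as you say), but as written the decisive step of part (1) rests on an incorrect general claim; either repair it as above or adopt the paper's trace-over-cosets formulation, which sidesteps the $\gamma$-by-$\gamma$ factorization entirely.
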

\begin{proof} For $\fm\lhd A$, denote $D_\fm=\begin{pmatrix} 1 & 0 \\ 0 & \fm\end{pmatrix}$ and 
$$
\G_0(\fn,\fm)=\left\{\begin{pmatrix} a & b \\  c & d\end{pmatrix}\in \GL_2(A)\ \big|\ c\in \fn, b\in \fm \right\}. 
$$
Suppose $\fm\mid \fn$. Since 
$$
\begin{pmatrix} 1 & 0 \\  0 & \fm\end{pmatrix} \begin{pmatrix} a & b \\  c & d\end{pmatrix} 
= \begin{pmatrix} a & b/\fm \\  c\fm & d\end{pmatrix} \begin{pmatrix} 1 & 0 \\  0 & \fm\end{pmatrix},
$$
we see that if $f$ is a $\G_0(\fn)$-invariant function on $E(\sT)$, then $f|D_\fm$ is $\G_0(\fn/\fm, \fm)$-invariant.  

Let $\fp$ be a prime dividing $\fn$, and let $S_\fp$ be a set of right coset representatives of $\G_0(\fn/\fp, \fp)$ in $\G_0(\fn/\fp)$. 
Since $f|D_\fp$ is $\G_0(\fn/\fp, \fp)$-invariant, the function 
$$
\sum_{\gamma\in S_\fp} (f|D_\fp) |\gamma
$$
is $\G_0(\fn/\fp)$-invariant. 

If $\fp$ divides $\fn/\fp$, then one checks that a set of right coset representatives for $\G_0(\fn/\fp, \fp)$ 
in $\G_0(\fn/\fp)$ is given by $\left\{\begin{pmatrix} 1 & b \\  0 & 1\end{pmatrix}\big| \deg(b)<\deg(\fp)\right\}$. The first claim then follows from 
$$
\sum_{\deg(b)<\deg(\fp)} f|D_\fp \begin{pmatrix} 1 & b \\  0 & 1\end{pmatrix} = f|U_\fp. 
$$

If $\fp$ does not divide $\fn/\fp$, then a set of right coset representatives for $\G_0(\fn/\fp, \fp)$ 
in $\G_0(\fn/\fp)$ is given by $\left\{\begin{pmatrix} 1 & b \\  0 & 1\end{pmatrix}\big| \deg(b)<\deg(\fp)\right\}\cup B$, where 
$B\in \G_0(\fn/\fp)$ is any matrix of the form $\begin{pmatrix} \alpha \fp & 1 \\  \delta\fn/\fp & 1\end{pmatrix}$. Now 
$$
\sum_{\deg(b)<\deg(\fp)} f|D_\fp \begin{pmatrix} 1 & b \\  0 & 1\end{pmatrix} + f|D_\fp B= f|U_\fp+f|W_\fp  
$$
is $\G_0(\fn/\fp)$-invariant, which proves the second claim. 
\end{proof}

\begin{cor}\label{cor1.8}
Suppose $\fp$ is a prime dividing $\fn$ and $\deg(\fn/\fp)\leq 2$. We have the following equalities of operators acting on $\cH_0(\fn, \Z)$:
\begin{enumerate}
\item If $\fp^2 | \fn$, then $U_\fp=0$. 
\item If $\fp^2\nmid\fn$, then $U_\fp=-W_\fp$. 
\end{enumerate}
\end{cor}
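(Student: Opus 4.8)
The plan is to deduce both statements directly from Lemma \ref{lemUW}, using the observation that when $\deg(\fn/\fp)\leq 2$ there are \emph{no} nonzero cuspidal harmonic cochains of level $\fn/\fp$. Indeed, $\cH_0(\fm,\Z)$ is free of rank $g(\fm)$, the genus of $X_0(\fm)$, and this genus is zero whenever $\deg(\fm)\leq 2$ (equivalently, $\T(\fm)=0$, as remarked after Theorem \ref{thmMAIN1}). Thus $\cH_0(\fn/\fp,\Z)=0$, so any element of $\cH_0(\fn,\Z)$ landing in the subgroup $\cH_0(\fn/\fp,\Z)\subseteq \cH_0(\fn,\Z)$ under an operator must be sent to $0$ by that operator.

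For part (1): assume $\fp^2\mid\fn$. Then $\fp^2\mid\fn$ exactly means we are in the first case of Lemma \ref{lemUW}, which gives $f|U_\fp\in\cH_0(\fn/\fp,\Z)$ for every $f\in\cH_0(\fn,\Z)$. Since $\deg(\fn/\fp)=\deg(\fn)-\deg(\fp)\leq \deg(\fn)-1$, and we must check $\deg(\fn/\fp)\leq 2$; this is exactly the hypothesis. Hence $\cH_0(\fn/\fp,\Z)=0$, so $f|U_\fp=0$ for all $f$, i.e. $U_\fp=0$ as an operator on $\cH_0(\fn,\Z)$.

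For part (2): assume $\fp^2\nmid\fn$, i.e. $\fp\parallel\fn$. Then part (2) of Lemma \ref{lemUW} applies and gives $f|(U_\fp+W_\fp)\in\cH_0(\fn/\fp,\Z)$ for every $f\in\cH_0(\fn,\Z)$. Again $\cH_0(\fn/\fp,\Z)=0$ by the hypothesis $\deg(\fn/\fp)\leq 2$, so $f|(U_\fp+W_\fp)=0$ for all $f$, which is the assertion $U_\fp=-W_\fp$ on $\cH_0(\fn,\Z)$.

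The only genuinely non-formal ingredient is the vanishing $\cH_0(\fm,\Z)=0$ for $\deg(\fm)\leq 2$; everything else is bookkeeping with Lemma \ref{lemUW}. I do not anticipate an obstacle here, since this vanishing is standard (it follows from the genus formula for $X_0(\fm)$, or equivalently from the description of $\G_0(\fm)\bs\sT$ for small $\fm$) and is already invoked elsewhere in the paper; one should simply cite it (e.g. \cite{GekelerKleinem}, or the rank statement following Theorem \ref{thmMAIN1}) rather than reprove it. A minor subtlety worth a remark: Lemma \ref{lemUW} is phrased for $f\in\cH_0(\fn,\Z)$ with the image lying in $\cH_0(\fn/\fp,\Z)$ regarded as the subgroup of level-$\fn$ cochains pulled back from level $\fn/\fp$; one should note that this pullback is injective, so vanishing of the abstract group $\cH_0(\fn/\fp,\Z)$ really does force the operator to be zero on all of $\cH_0(\fn,\Z)$.
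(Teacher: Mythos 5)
Your proof is correct and is essentially the paper's own argument: the paper's proof is the one-line observation that $\cH_0(\fn/\fp,\Z)=0$ when $\deg(\fn/\fp)\leq 2$ (citing \cite{GN}), combined with Lemma \ref{lemUW}. Your closing remark about a pullback is unnecessary, since in Lemma \ref{lemUW} the cochain $f|U_\fp$ (resp.\ $f|(U_\fp+W_\fp)$) is literally a $\G_0(\fn/\fp)$-invariant function on $E(\sT)$, so the vanishing of $\cH_0(\fn/\fp,\Z)$ forces it to be zero outright.
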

\begin{proof} 
If $\deg(\fn/\fp)\leq 2$, then $\cH_0(\fn/\fp, \Z)=0$; cf. \cite{GN}. Now apply Lemma \ref{lemUW}.  
\end{proof}


\begin{defn}\label{defEI} The \textit{Eisenstein ideal} $\fE(\fn)$ of $\T(\fn)$ is the 
ideal generated by the elements $T_\fp-(|\fp|+1)$ for all prime $\fp\nmid \fn$. Let $\fE(\fn)^0$ 
be the ideal of $\T(\fn)^0$ generated by the same elements. 
Denote 
$$
\cE_{00}(\fn, R)=\cH_{00}(\fn, R)[\fE(\fn)]
$$ 
\end{defn}

\begin{lem}\label{lemTE0} Let $\fn\lhd A$ be a non-zero ideal. 
\begin{enumerate}
\item $\T(\fn)/\fE(\fn)$ is finite. 
\item $\T(\fn)^0/\fE(\fn)^0$ is finite and cyclic. 
\item The exponent of $\T(\fn)/\fE(\fn)$ divides the order of $\T(\fn)^0/\fE(\fn)^0$. 
\end{enumerate}
\end{lem}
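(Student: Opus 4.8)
The plan is to reduce all three parts to a single analytic fact---that no nonzero cuspidal Hecke eigenform can have the Eisenstein eigenvalue $|\fp|+1$ at every prime $\fp\nmid\fn$---and then to argue purely with ideals in finite $\Z$-algebras. The structural observation underlying everything is that, since $\cH_0(\fn,\Z)$ is free of finite rank over $\Z$, both $\T(\fn)\subseteq\End_\Z(\cH_0(\fn,\Z))$ and its subring $\T(\fn)^0$ are finitely generated free $\Z$-modules; consequently any $\Z$-torsion quotient of either one is finite.

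First I would treat $\T(\fn)^0$. Fix a prime $\fp\nmid\fn$. The eigenvalues $\lambda$ of $T_\fp$ acting on $\cH_0(\fn,\C)$ are algebraic integers satisfying $|\lambda|\le 2|\fp|^{1/2}$ in every complex embedding (the Ramanujan bound over function fields, due to Drinfeld; alternatively one may use multiplicity one together with the fact that a harmonic cochain which is an eigenform with eigenvalue $|\fp|+1$ for all $\fp\nmid\fn$ is a combination of genuine Eisenstein series and hence not cuspidal). Since $2|\fp|^{1/2}<|\fp|+1$, the operator $T_\fp-(|\fp|+1)$ has no zero eigenvalue on $\cH_0(\fn,\Q)$ and is therefore invertible there. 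Now $\T(\fn)^0\otimes\Q$ is a finite-dimensional commutative $\Q$-algebra acting faithfully on $\cH_0(\fn,\Q)$, and any element of such an algebra that acts invertibly is a unit (its minimal polynomial on $\cH_0(\fn,\Q)$ has nonzero constant term, which exhibits the inverse as a polynomial in the element). Hence $T_\fp-(|\fp|+1)$ is a unit in $\T(\fn)^0\otimes\Q$, so $\fE(\fn)^0\otimes\Q=\T(\fn)^0\otimes\Q$; thus $\T(\fn)^0/\fE(\fn)^0$ is $\Z$-torsion, hence finite. It is cyclic because, by the recursions of Proposition \ref{propHOm}, $\T(\fn)^0$ is generated as a ring by $\{T_\fp:\fp\nmid\fn\}$, and modulo $\fE(\fn)^0$ each $T_\fp$ is congruent to the rational integer $|\fp|+1$, so the image of $1$ generates; this gives $\T(\fn)^0/\fE(\fn)^0\cong\Z/m\Z$ for some $m\ge 1$, proving (2).

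For (3), let $m$ be the order of $\T(\fn)^0/\fE(\fn)^0$. Then $m=m\cdot 1\in\fE(\fn)^0$, and $\fE(\fn)^0\subseteq\fE(\fn)$ because the two ideals have the same generators, those of $\fE(\fn)^0$ being combined with coefficients from $\T(\fn)^0\subseteq\T(\fn)$. Since $\fE(\fn)$ is an ideal of $\T(\fn)$, we get $m\,\T(\fn)\subseteq\fE(\fn)$, i.e. $m$ annihilates $\T(\fn)/\fE(\fn)$, so its exponent divides $m$. Part (1) is then immediate: $\T(\fn)/\fE(\fn)$ is a finitely generated $\Z$-module killed by $m$, hence finite. (One could also prove (1) directly by the argument of (2): $T_\fp-(|\fp|+1)$ acts invertibly on $\cH_0(\fn,\Q)$, so $\fE(\fn)\otimes\Q=\T(\fn)\otimes\Q$.)

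I expect the only real content to be the eigenvalue input used in (2)---namely that the cuspidal and Eisenstein systems of Hecke eigenvalues are disjoint---while the rest is formal manipulation with ideals in finite rings. In this setting that input is a known theorem (Ramanujan--Drinfeld, or the explicit description of Eisenstein harmonic cochains), so the ``obstacle'' is mainly a matter of invoking it correctly rather than of proving it.
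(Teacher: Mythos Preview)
Your proof is correct and follows essentially the same approach as the paper: both rely on the invertibility of $T_\fp-(|\fp|+1)$ on $\cH_0(\fn,\Q)$ (the paper cites Gekeler/Drinfeld reciprocity/Weil conjectures, you invoke the Ramanujan bound, which is the same input), deduce cyclicity from the fact that $\T(\fn)^0$ is generated by the $T_\fp$, and prove (3) by observing that the order $m$ of $\T(\fn)^0/\fE(\fn)^0$ lies in $\fE(\fn)^0\subseteq\fE(\fn)$ and hence annihilates $\T(\fn)/\fE(\fn)$. The only cosmetic differences are your ordering (you prove (2), then (3), then deduce (1), whereas the paper proves (1) directly first via the pairing with $\cH_0(\fn,\Q)$) and your use of the minimal polynomial where the paper uses Cayley--Hamilton.
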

\begin{proof} 
To simplify the notation, we will omit the level $\fn$. Since $\T/\fE$ is finitely generated, to prove the first statement it is enough to 
prove that $(\T/\fE)\otimes \Q \cong (\T\otimes \Q)/(\fE\otimes \Q)=0$. The pairing 
$$
(\T\otimes \Q) \times \cH_0(\fn, \Q)\to \Q
$$
obtained from (\ref{GPairing}) by extension of scalars is perfect, so if $(\T\otimes \Q)/(\fE\otimes \Q)\neq 0$, then $\cE_{00}(\fn, \Q)\neq 0$. 
This implies that for any prime $\fp\nmid \fn$ the operator $\eta_\fp:=T_\fp-|\fp|-1$ acting on $\cH_0(\fn, \Q)$ 
has non-trivial kernel, contradicting \cite[p. 366]{GekelerIJM}, according to which $\eta_\fp$ is invertible. (This is a consequence 
of Drinfeld's reciprocity \cite{Drinfeld} and the Weil conjectures.)  

To prove the second statement, note that from the definition of $\T^0$ and Proposition \ref{propHOm} 
it follows that the $\Z$-algebra $\T^0$ is generated by $T_\fp$'s with $\fp\nmid \fn$ prime. Since $T_\fp$ 
is congruent to an element of $\Z$ modulo $\fE^0$, the natural inclusion of $\Z$ into $\T^0$ 
induces a surjection $\Z\to \T^0/\fE^0$. Hence to prove the second statement it is enough to show that $\fE^0$ 
contains a non-zero integer. Fix some $\fp\nmid \fn$. Let $x^m+a_{m-1}x^{n-1}+\cdots+a_1x +a_0$ 
be the characteristic polynomial of $\eta_\fp$ acting on $\cH_0(\fn, \Z)$. Since $\eta_\fp$ is invertible, $a_0\neq 0$. 
By the Cayley-Hamilton theorem, 
$$
-a_0=\eta_\fp^m+a_{m-1}\eta_\fp^{n-1}+\cdots+a_1\eta_\fp\in \fE^0. 
$$

Finally, to prove the last statement, note that $\T/\fE$ is a $\T^0$-module. Let $N$ be the order of $\T(\fn)^0/\fE(\fn)^0$. 
Since $\fE^0$ obviously annihilates 
$\T/\fE$, we get that $N\in \fE^0$ annihilates $\T/\fE$. 
\end{proof}


\subsection{Fourier expansion}\label{ssFE} Let $f\in \cH_0(\fn,\C)$. Let $\eta: \Fi\to \C^\times$ be the character 
\begin{equation}\label{eq-eta}
\eta: \sum a_i\pi^i \mapsto \eta_0(\Tr(a_1)), 
\end{equation}
where $\Tr: \F_q\to \F_p$ is the trace and $\eta_0$ is any nontrivial character of $\F_p$. 
For $\fm\lhd A$, the \textit{$\fm$-th Fourier coefficient} $f^\ast(\fm)$ of $f$ is 
$$
f^\ast(\fm) = q^{-1-\deg(\fm)}\sum_{u\in (\pi)/(\pi^{2+\deg(\fm)})} 
f\left(\begin{pmatrix}\pi^{2+\deg(\fm)} & u \\ 0 &1\end{pmatrix}\right)\eta(-\fm u).  
$$ 
It is easy to show that 
\begin{equation}\label{eqfast1}
f^\ast(1)=-f\left(\begin{pmatrix}\pi^2 & \pi \\ 0 &1\end{pmatrix}\right), 
\end{equation}
and the Fourier coefficients $f^\ast(\fm)$ of $f\in \cH_0(\fn, \Z)$ lie in $\Z[p^{-1}]$; cf. \cite[pp. 42-43]{Analytical}.

The cuspidal harmonic cochain $f$ can be uniquely recovered from its Fourier coefficients via the \textit{Fourier expansion} 
$$
f \left(\begin{pmatrix} \pi^k & u \\ 0 &1\end{pmatrix}\right) = 
\sum_{0\leq j\leq k-2} q^{-k+2+j}\sum_{\deg(\fm)=j}f^\ast(\fm)\nu(\fm u) 
$$
with $\nu(v)=-1$ if $v$ has a term of order $\pi$ in its $\pi$-expansion, and $\nu(v)=q-1$ if it has no term of order $\pi$; see 
\cite[$\S$2]{Improper}. 

The action of Hecke operators on the Fourier expansion is given by the formula, cf. \cite[Lem. 3.2]{Pal}, 
$$
(f|T_\fm)^\ast(\fr) = \sum_{\substack{a\ \mathrm{monic}\\ a|\gcd(\fm, \fr)\\  (a)+\fn=A}}\frac{|\fm|}{|a|}\cdot f^\ast\left(\frac{\fr \fm}{a^2}\right). 
$$
In particular, 
$$
(f|T_\fm)^\ast(1)=|\fm|f^\ast(\fm). 
$$


\subsection{Drinfeld modular curves}\label{ssDMC} 
The Drinfeld half-plane 
$$\Omega=\p^1(\C_\infty)-\p^1(\Fi)=\C_\infty-\Fi$$ 
has a natural structure of a smooth connected rigid-analytic space 
over $\Fi$; see \cite[$\S$1]{GR}. 
The group $\G_0(\fn)$ acts on $\Omega$ via linear 
fractional transformations:
$$
\begin{pmatrix} a & b \\ c & d \end{pmatrix}z=\frac{az+b}{cz+d}. 
$$
There is a smooth affine algebraic curve $Y_0(\fn)$ defined over $F$ whose analytification over $\Fi$ is 
isomorphic to the quotient $\G_0(\fn)\bs \Omega$; cf. \cite[Prop. 6.6]{Drinfeld}. Let $X_0(\fn)$ be the smooth 
projective model of $Y_0(\fn)$. The curve $X_0(\fn)$ is the \textit{Drinfeld modular curve} corresponding to $\G_0(\fn)$. 
 The points $X_0(\fn)(\C_\infty)-Y_0(\fn)(\C_\infty)$, called the \textit{cusps} of $X_0(\fn)$, 
 are in natural bijection with the cusps of $\G_0(\fn)\bs \sT$. Moreover, the genus of $X_0(\fn)$ is equal to $\rank_\Z \cH_0(\fn, \Z)$. 

 The curve $Y_0(\fn)$ is the generic fibre of the coarse moduli scheme for the functor which associates to an $A$-scheme $S$ 
 the set of isomorphism classes of pairs $(\phi, C_\fn)$, where $\phi$ is a Drinfeld $A$-module of rank $2$ over $S$ 
 and $C_\fn\cong A/\fn$ is an ``$\fn$-cyclic subgroup'' of $\varphi$; we refer to \cite{Drinfeld}, \cite{Uber}, \cite{GR} for the details. 

 Let $\fm$ be a divisor of $\fn$. There is a functorial morphism 
\begin{equation}\label{eqLLM}
 X_0(\fn)\to X_0(\fm),
\end{equation}
which in terms of the moduli problem is given by $(\phi, C_\fn)\mapsto (\phi, C_{\fm})$, 
where $C_{\fm}$ is the $\fm$-cyclic subgroup of $C_\fn$. This morphism is defined over $F$ and 
maps the cusps of $X_0(\fn)$ to the cusps of $X_0(\fm)$, cf. \cite{KM}. 
 
The Jacobian variety $J:=J_0(\fn)$ of $X_0(\fn)$ has bad reduction at $\infty$ and at the primes dividing $\fn$. 
Let $v$ be a place of bad reduction of $J$, and let $\cJ$ denote the N\'eron model of $J$ over $\cO_v$. 
Let $\cJ^0$ denote the relative connected component of the identity of $\cJ$, that is, the largest open 
subscheme of $\cJ$ containing the identity section which has connected fibres.  
The \textit{group of connected components} (or \textit{component group}) of $J$ at $v$ 
is $\Phi_v:=\cJ_{\F_v}/\cJ_{\F_v}^0$. The homomorphism 
$\wp_v: J(F_v)\to \Phi_v$ obtained from the composition 
$$
\wp_v: J(F_v)=\cJ(\cO_v)\to \cJ_{\F_v}(\overline{\F}_v)\to \Phi_v
$$
will be called the \textit{canonical specialization map}. (Of course, the component groups depend on $\fn$, 
which we will omit from notation.)

The Hecke operator $T_\fm$ may also be defined as a correspondence on $X_0(\fn)$, so 
$T_\fm$ induces an endomorphism of the Jacobian variety 
 $J:=J_0(\fn)$ of $X_0(\fn)$. The $\Z$-subalgebra of $\End_F(J)$ generated 
by all Hecke endomorphisms is canonically isomorphic to $\T(\fn)$; this is a 
consequence of Drinfeld's reciprocity law \cite[Thm. 2]{Drinfeld}.  
The endomorphisms $T_\fm$ of $J$ canonically extend 
to $\cJ$ and preserve $\cJ^0$, hence act on $\Phi_v$. 


\subsection{Grothendieck's monodromy pairing}\label{ssGMP}
Fix a non-zero ideal $\fn\lhd A$, and denote $\G=\G_0(\fn)$. 
Let $e\in E(\G\bs \sT)$ and $\tilde{e}\in E(\sT)$ be any edge mapping to $e$ under the quotient map. 
The stabilizer group 
$$
\Stab_{\G}(\tilde{e}):=\{\gamma\in \G\ |\ \gamma\tilde{e}=\tilde{e}\}
$$
is finite, contains the scalar matrices $Z(\F_q)$, and does not depend on the choice of $\tilde{e}$ mapping to $e$. Denote 
\begin{equation}\label{eqn(e)}
n(e)=\# \Stab_{\G}(\tilde{e})/\F_q^\times. 
\end{equation}
This is an integer which is equal to $1$ for most edges in $(\G\bs\sT)^0$. It is clear that $n(e)$ does not depend on the orientation of $e$. 
Define a pairing on $\cH_0(\fn, \Z)$ by 
\begin{equation}\label{eqPIP}
(f, g)=\sum_{e\in E(\G\bs\sT)^+}f(e)g(e)n(e)^{-1}. 
\end{equation}
Since $f$ and $g$ are cuspidal, all but finitely many terms of this sum are zero, so the pairing is well-defined. 
It is clear that $(\cdot, \cdot)$ is symmetric and positive-definite. It is also $\Z$-valued, as follows from \cite[(5.7)]{GR}. 

\begin{rem}\label{remPIP}
The Haar measure on $\GL_2(\Fi)$ induces a push-forward measure on $E(\G\bs \sT)$, which, up to a scalar multiple, 
is equal to $n(e)^{-1}$; cf. \cite[(4.8)]{GR}. 
One can show that (\ref{eqPIP}) agrees with the restriction to $\cH_0(\fn, \Z)$ of the 
Petersson scalar product if one interprets $\cH_0(\fn, \C)$ as a space 
of automorphic forms; see \cite[5.7]{GR}.
\end{rem}

The existence of rigid-analytic 
uniformization of Drinfeld modular curves over $\Fi$ implies that $X_0(\fn)_{\Fi}$ is a Mumford curve. Thus,   
the Jacobian $J:=J_0(\fn)$ has split purely toric reduction at $\infty$, i.e., $\cJ^0_{\F_\infty}$ is a split algebraic 
torus over $\F_\infty$. It follows from the theory of Mumford curves that the character group of the torus $\cJ^0_{\F_\infty}$ 
can be identified with the simplicial homology group $H_1(\G\bs\sT, \Z)$.  

Let $\varphi\in H_1(\G\bs\sT, \Z)$, regarded as a $\G$-invariant function $\varphi: E(\sT)\to \Z$. 
Then $\varphi^\ast: e\mapsto n(e)\varphi(e)$ is a well-defined element of $\cH_0(\fn, \Z)$, and $\varphi\mapsto \varphi^\ast: 
H_1(\G\bs\sT, \Z)\to \cH_0(\fn, \Z)$ is an isomorphism by \cite{GN}. Via these isomorphisms, the pairing (\ref{eqPIP}) 
is Grothendieck's monodromy pairing with respect to the canonical principal polarization on $J$; see \cite[$\S$4]{PapikianAIF}. 
Therefore, by a theorem of Grothendieck, there is a short exact sequence (cf. \textit{loc. cit.})
\begin{equation}\label{eqmopa-inf}
0\To \cH_0(\fn, \Z)\xrightarrow{f\mapsto (f, \cdot)} \Hom(\cH_0(\fn, \Z), \Z)\To \Phi_\infty\To 0.
\end{equation}

Let $\theta$ be the canonical principal polarization on $J$. Let $\alpha\mapsto \alpha^\dag$ be the 
Rosati involution of $\End(J)$ induced by $\theta$. By the N\'eron mapping property, an endomorphism $\alpha$ of $J$
extends to an endomorphism of $\cJ$ which preserves $\cJ^0$. Hence $\alpha$ canonically acts on $\Phi_\infty$. 
Moreover, considering $\cH_0(\fn, \Z)$ as the character group of 
$\cJ^0_{\F_\infty}$, we get an endomorphism of $\cH_0(\fn, \Z)$, which we again denote by $\alpha$. The adjoint 
of $\alpha$ with respect to the monodromy pairing (\ref{eqPIP}) is $\alpha^\dag$ (cf. \cite[$\S$3.3]{PapikianCM}), i.e.,  
$$
(\alpha f, g)=(f, \alpha^\dag g)\quad \text{for all }f, g\in \cH_0(\fn, \Z). 
$$
Let $\alpha$ act on $\Hom(\cH_0(\fn, \Z), \Z)$ through its action on the first argument. If we make $\alpha$ act 
on the first term of the exact sequence (\ref{eqmopa-inf}) as $\alpha^\dag$, then the whole sequence 
becomes $\alpha$-equivariant. 

For $t\in \T(\fn)$, considered as an element of $\End(J)$, we have (cf. \cite[p. 444]{RibetLL})
$$
W_\fn t W_\fn=t^\dag. 
$$
(This reflects the fact that the adjoint of $t$ with respect to the Petersson inner product is $W_\fn t W_\fn$.)
Since the Atkin-Lehner involution $W_\fn$ commutes with $T_\fp\in \T(\fn)$ for any $\fp\nmid \fn$, the exact sequence 
(\ref{eqmopa-inf}) is $\T(\fn)^0$-equivariant.  

\begin{rem}
The existence of the exact sequence (\ref{eqmopa-inf}) was deduced by Gekeler
as a consequence of the rigid-analytic uniformization of $J_0(\fn)$ constructed in \cite{GR}, without using Grothendieck's result; 
see Corollary 2.11 in \cite{Analytical}. The $\T(\fn)^0$-equivariance 
of (\ref{eqmopa-inf}) then follows from the $\T(\fn)^0$-equivariance of this uniformization; see \cite[$\S$9]{GR}.
\end{rem}


\section{The cuspidal divisor group}\label{sCDG}

The \textit{cuspidal divisor group} $\cC(\fn)$ 
is the subgroup of $J_0(\fn)$ generated by the classes of divisors $[c]-[c']$, where $c, c'$  
run through the set of cusps of $X_0(\fn)$. The cuspidal divisor group is finite \cite{GekelerIJM}. 

The cusps of $X_0(\fn)$ are in bijection with the orbits 
of the action of $\G_0(\fn)$ on 
$$
\p^1(F)=\p^1(A)=\left\{\begin{pmatrix} a \\ b\end{pmatrix}\ \big|\ a, b\in A, \gcd(a, b)=1, a \text{ is monic}\right\}, 
$$
where $\G_0(\fn)$ acts on $\p^1(F)$ from the left as on column vectors. 
The rational cusps are those cusps which are defined over $F$, i.e., which give $F$-rational points on $X_0(\fn)$. 

\begin{lem}\label{lemCD1} Let 
$
\fn=\fp_1^{r_1}\cdots \fp_s^{r_s}$
be the prime factorization of $\fn$. Put 
$$
\kappa(\fn)=\prod_{i=1}^s \left(|\fp_i|^{\lfloor r_i/2\rfloor}+|\fp_i|^{\lfloor (r_i-1)/2\rfloor}\right). 
$$
\begin{enumerate}
\item[(i)] Every cusp of $X_0(\fn)$ has a representative $\begin{pmatrix} a \\ b\end{pmatrix}$ where $a, b\in A$ are 
monic, $b| \fn$, and $\mathrm{gcd}(a, \fn)=1$. Two such representatives $\begin{pmatrix} a \\ b\end{pmatrix}$ 
and $\begin{pmatrix} a' \\ b'\end{pmatrix}$ represent the same cusp of $X_0(\fn)$ if and only if $b=b'$ 
and $\alpha a'=a$ modulo $\tilde{b}:=\mathrm{gcd}(b, \fn/b)$ for some $\alpha\in \F_q^\times$. 
\item[(ii)] The total number of cusps of $X_0(\fn)$ is $2^s+\frac{\kappa(\fn)-2^s}{q-1}$. 
\item[(iii)] The cusps with the same $b$ are conjugate over $F$. In particular, $\begin{pmatrix} a \\ b\end{pmatrix}$ 
is rational if and only if $\deg(\tilde{b})\leq 1$, or $q=2$ and $\tilde{b}=T^2+T$. 
\item[(iv)]
If $\fp_i$ is a prime divisor of $\fn$ of degree one, put 
$$
t_i=
\begin{cases}
0 & r_i=1, \\
1 & r_i=1, \\
2 & r_i\geq 3. 
\end{cases}
$$
Let $u=t_1\cdot t_2$ if ($q=2$ and $\fp_1=T$, $\fp_2=T-1$ 
are divisors of $\fn$), and $u=0$ otherwise. 
Then the number of rational cusps of $X_0(\fn)$ is 
$$
2^s+2^{s-1}\sum t_i+2^{s-2}u,
$$
where the sum is over the prime divisors of degree one of $\fn$. 
\end{enumerate}
\end{lem}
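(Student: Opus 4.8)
The plan is to prove the four parts in order: (i) supplies the combinatorial backbone, (ii) and (iv) are counting exercises built on it, and (iii) carries the single piece of genuinely arithmetic input.

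\emph{Part (i).} Recall that the cusps of $X_0(\fn)$ biject with $\G_0(\fn)\backslash\p^1(A)$, a point of $\p^1(A)$ being a primitive column vector $\binom{x}{y}$ with $x$ monic. First I would establish the normal form by reducing inside a $\G_0(\fn)$-orbit: the translations $\left(\begin{smallmatrix}1 & m\\0 & 1\end{smallmatrix}\right)$ replace $x$ by $x+my$, the lower unipotents $\left(\begin{smallmatrix}1 & 0\\ \fn m & 1\end{smallmatrix}\right)$ replace $y$ by $y+\fn mx$, scalars rescale, and with these (and more general elements of $\G_0(\fn)$ where needed) one brings any primitive $\binom{x}{y}$ to some $\binom{a}{b}$ with $b\mid\fn$ monic and $\gcd(a,\fn)=1$ --- the exact analogue of the classical reduction for $\Gamma_0(N)\backslash\p^1(\Z)$ (cf. \cite{GR}). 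For the equivalence criterion I would argue in two steps. If $\gamma=\left(\begin{smallmatrix}u & v\\ w\fn & z\end{smallmatrix}\right)\in\G_0(\fn)$ carries $\binom{x}{y}$ to $\binom{a}{b}$ in $\p^1$, then $\det\gamma\in\F_q^\times$ forces $\gcd(z,\fn)=1$, so $\gcd(w\fn x+zy,\fn)=\gcd(y,\fn)$, and comparing with the second coordinate $\lambda b$ ($\lambda\in\F_q^\times$) gives $b=\gcd(y,\fn)$; thus $b$ is an orbit invariant, which already yields the ``$b=b'$'' assertion. For fixed $b$, writing out $\gamma\binom{a}{b}=\binom{a'}{b}$ gives $z=1-w(\fn/b)a$, $a'=ua+vb$, and $\det\gamma=u-w(\fn/b)a'\in\F_q^\times$; analyzing this last condition one prime $\fp\mid\fn$ at a time --- comparing the power of $\fp$ in $b$ with that in $\fn/b$ --- shows that $a'$ is reachable precisely when $a'\equiv\alpha a\pmod{\tilde b}$ for some $\alpha\in\F_q^\times$, with $\tilde b=\gcd(b,\fn/b)$.

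\emph{Part (ii).} By (i) the cusps form $\bigsqcup_{b\mid\fn\ \mathrm{monic}}(A/\tilde b)^\times/\F_q^\times$. The $\F_q^\times$-action on $(A/\tilde b)^\times$ is free once $\deg\tilde b\ge1$ (a constant $\equiv1$ modulo a polynomial of positive degree equals $1$), and is trivial on the one-point set $(A/\tilde b)^\times$ when $\tilde b=1$, which happens for exactly the $2^s$ unitary divisors $b=\prod_{i\in S}\fp_i^{r_i}$. So the number of cusps is $2^s+(q-1)^{-1}\sum_{b:\,\tilde b\ne1}\phi(\tilde b)$ with $\phi(\fm):=\#(A/\fm)^\times$. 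By multiplicativity $\sum_{b\mid\fn}\phi(\tilde b)=\prod_{i=1}^s\bigl(\sum_{k=0}^{r_i}\phi(\fp_i^{\min(k,\,r_i-k)})\bigr)$, and each per-prime sum telescopes to $|\fp_i|^{\lfloor r_i/2\rfloor}+|\fp_i|^{\lfloor(r_i-1)/2\rfloor}$; hence $\sum_{b\mid\fn}\phi(\tilde b)=\kappa(\fn)$, and removing the $2^s$ terms with $\tilde b=1$ yields $2^s+\frac{\kappa(\fn)-2^s}{q-1}$.

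\emph{Part (iii).} Here an orbit count is not enough and I would appeal to the Galois action on cusps: the ``$b$-packet'' of cusps is a closed $F$-finite subscheme of $X_0(\fn)$, and by the theory of cyclotomic function fields (equivalently, via the Tate--Drinfeld uniformization at the cusps; cf. \cite{GR}) $\Gal(F^{\sep}/F)$ acts on it through a surjection onto $(A/\tilde b)^\times/\F_q^\times$ compatible with the labelling of (i). In particular this action is transitive, so cusps with the same $b$ are all conjugate over $F$; such a cusp is therefore $F$-rational iff its packet is a single point, i.e.\ iff $\tilde b=1$ or $\phi(\tilde b)=q-1$. A short check on polynomials of small degree shows $\phi(\tilde b)=q-1$ occurs exactly when $\deg\tilde b=1$, together with the single extra case $q=2$, $\tilde b=T^2+T$; with $\tilde b=1$ this is precisely ``$\deg\tilde b\le1$, or $q=2$ and $\tilde b=T^2+T$''. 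Finally, for (iv): by (i) and (iii) the rational cusps are the cusps whose packet has $\tilde b$ equal to $1$, to a linear polynomial, or (when $q=2$) to $T^2+T=T(T-1)$, and each such packet is a singleton, so I count the monic $b=\prod_i\fp_i^{k_i}\mid\fn$ with $\tilde b=\prod_i\fp_i^{\min(k_i,\,r_i-k_i)}$ of such a shape: $2^s$ unitary divisors for $\tilde b=1$; for $\deg\tilde b=1$, the constraint $\min(k_i,r_i-k_i)=1$ at a single degree-one prime $\fp_i$ (with $t_i$ choices of $k_i$, $t_i=0,1,2$ as $r_i=1,2,\ge3$) and unitary choices elsewhere, giving $2^{s-1}\sum t_i$; and, when $q=2$ with $T,T-1\mid\fn$, the analogous constraint at both $T$ and $T-1$ with unitary choices elsewhere, giving $2^{s-2}u$. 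Adding the disjoint cases gives $2^s+2^{s-1}\sum t_i+2^{s-2}u$. I expect the main obstacle to be the equivalence criterion in (i) --- in particular, showing that the residual freedom is reduction modulo $\tilde b$ and not modulo $b$, which turns delicately on the unit-determinant condition --- together with the cyclotomic input underlying (iii).
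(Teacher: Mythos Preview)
Your argument is correct and follows the standard route. The paper itself does not prove this lemma at all: its ``proof'' consists solely of the citation ``See \cite[\S6]{Invariants} and \cite[Prop.~1]{SchweizerHE}.'' Your sketch therefore supplies considerably more than the paper does, and the approach you take --- the classical $\Gamma_0(N)$-style reduction to normal form in (i), the Euler-$\phi$ count factored prime-by-prime in (ii), the Galois action via Carlitz cyclotomic theory in (iii), and the divisor count in (iv) --- is exactly what one finds in those references. One small pointer: for the Galois input in (iii) the precise statement you want (that the cusps in a $b$-packet form a single Galois orbit over $F$, with the action factoring through $(A/\tilde b)^\times/\F_q^\times$) is Gekeler's result in \cite{Invariants}; the paper itself alludes to this in Remark~\ref{rem3.6}(2), where the field $F_+(\fn)$ appears.
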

\begin{proof} See \cite[$\S$6]{Invariants} and \cite[Prop. 1]{SchweizerHE}. 
\end{proof}

\begin{lem}\label{lemWcusp} Let $\fm$ be a non-trivial ideal dividing $\fn$ with $\mathrm{gcd}(\fm, \fn/\fm)=1$, and let $W_\fm$ 
denote the corresponding Atkin-Lehner involution. 
Let $\begin{pmatrix} a \\ b\end{pmatrix}$ and $\begin{pmatrix} a' \\ b'\end{pmatrix}$ be 
cusps of $X_0(\fn)$ with $$\begin{pmatrix} a' \\ b'\end{pmatrix}=W_\fm \begin{pmatrix} a \\ b\end{pmatrix}.$$ 
Then $$\mathrm{gcd}(b, \fm)\cdot \mathrm{gcd}(b', \fm)=\fm \quad \text{ and } \quad 
\mathrm{gcd}(b, \fn/\fm)= \mathrm{gcd}(b', \fn/\fm).$$
\end{lem}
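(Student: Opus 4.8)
The plan is to compute the effect of $W_\fm$ on a cusp representative directly, using the normal form from Lemma \ref{lemCD1}(i), together with the fact that $W_\fm$ can be taken to act by a matrix of the shape (\ref{ALmatrix}), i.e.\
$$
W_\fm=\begin{pmatrix} \alpha\fm & \beta \\ \gamma\fn & \delta\fm \end{pmatrix},\qquad (\det W_\fm)=\fm.
$$
Since $\gcd(\fm,\fn/\fm)=1$, write $\fn=\fm\cdot\fn'$ with $\gcd(\fm,\fn')=1$, and similarly factor $b=b_\fm\cdot b'$ where $b_\fm:=\gcd(b,\fm)$ and $b':=\gcd(b,\fn')$ (this uses $b\mid\fn$, so every prime power in $b$ is either a prime power dividing $\fm$ or one dividing $\fn'$). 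The claim is that applying $W_\fm$ replaces $b_\fm$ by $\fm/b_\fm$ and leaves $b'$ untouched; this is exactly the asserted identities $\gcd(b,\fm)\cdot\gcd(b',\fm)=\fm$ and $\gcd(b,\fn/\fm)=\gcd(b',\fn/\fm)$.

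The key steps, in order, are as follows. First I would reduce to the local situation: by the Chinese Remainder Theorem and the coprimality $\gcd(\fm,\fn')=1$, the $\PGL_2$-orbit computation on $\p^1(F)$ decouples prime-by-prime, and $W_\fm$ acts as the identity (up to $\G_0(\fn)$) on the primes dividing $\fn'$ while acting as a genuine Atkin-Lehner swap on the primes dividing $\fm$. Concretely, I would take the representative $\binom{a}{b}$ with $a,b$ monic, $b\mid\fn$, $\gcd(a,\fn)=1$, and compute
$$
W_\fm\begin{pmatrix} a \\ b\end{pmatrix}=\begin{pmatrix}\alpha\fm a+\beta b \\ \gamma\fn a+\delta\fm b\end{pmatrix}.
$$
Then I would extract the greatest common divisor of the two entries and divide out, and use Lemma \ref{lemCD1}(i) again to put the result back in normal form $\binom{a'}{b'}$. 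The second step is to track the power of each prime $\fp\mid\fm$ dividing $b'$: writing $r=v_\fp(\fm)$ and $j=v_\fp(b)$ (so $0\le j\le r$), one checks from the displayed column vector and the condition $(\det W_\fm)=\fm$ that $v_\fp(b')=r-j$, which is the local form of $\gcd(b,\fm)\gcd(b',\fm)=\fm$. The third step is to check that for $\fp\mid\fn'$ the matrix $W_\fm$ preserves the $\fp$-part of $b$; this follows because modulo such $\fp$ the matrix $W_\fm$ is $\begin{pmatrix}\ast&\beta\\0&\ast\end{pmatrix}$ with the diagonal entries units (as $\fp\nmid\fm$), hence lies in the image of $\G_0$-conjugation on $\p^1(\F_\fp)$, so it fixes the relevant cusp-parameter. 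Assembling the local data via CRT gives both identities.

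The main obstacle I expect is bookkeeping at the primes $\fp\mid\fm$ with $j=v_\fp(b)$ strictly between $0$ and $r$: there one must verify that $v_\fp$ of the gcd of the two entries of $W_\fm\binom{a}{b}$ is exactly $\min(r,j)=j$ when $\gcd(a,\fm)=1$ — here the hypothesis $\gcd(a,\fn)=1$ from Lemma \ref{lemCD1}(i) is essential, since it forces $\fp\nmid\alpha\fm a+\beta b$ to fail only to the expected order. One also has to confirm that the $\F_q^\times$-scaling ambiguity and the ``modulo $\tilde b$'' ambiguity in Lemma \ref{lemCD1}(i) do not affect $b'$ itself (they only affect $a'$), which is immediate from the statement of that lemma. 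Everything else is a routine valuation computation, so once the local picture at a single prime dividing $\fm$ is nailed down, the global statement follows by multiplicativity.
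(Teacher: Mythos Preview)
The paper does not give a proof of this lemma; it simply cites \cite[Lem.~8]{SchweizerHE}. Your proposal therefore supplies what the paper omits, and the direct valuation-by-valuation computation you outline is sound and is essentially how one proves the cited result. A few remarks: your use of $b'$ for both $\gcd(b,\fn')$ and the second coordinate of the new cusp is a notational collision you should fix; and at primes $\fp\mid\fn'$ you should work modulo $\fp^{v_\fp(\fn')}$ rather than just modulo $\fp$ (the point being that $\gamma\fn\equiv 0$ and $\alpha\fm,\delta\fm$ are units there, so $W_\fm$ reduces to an upper-triangular matrix with unit diagonal and hence preserves the local invariant). One clean way to organize the $\fp\mid\fm$ case is to first note that the content $g$ of $W_\fm\binom{a}{b}$ divides $\det W_\fm$ and hence $\fm$, which immediately gives $v_\fp(g)\le r$; combined with your lower bounds this pins down $v_\fp(g)=j$ in all cases without separate treatment of $j=r$.
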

\begin{proof}
See \cite[Lem. 8]{SchweizerHE}. 
\end{proof}

\begin{cor}\label{cor2.5} Assume $\deg(\fn)=3$. 
\begin{enumerate}
\item[(i)] The cusps of $X_0(\fn)$ are in bijection with the monic divisors of $\fn$ via 
$$
\fd|\fn \mapsto [\fd]:=\text{$\G_0(\fn)$-orbit of }\begin{pmatrix} 1 \\ \fd\end{pmatrix}. 
$$
\item[(ii)] All cusps of $X_0(\fn)$ are rational. 
\item[(iii)] For $\fm\parallel \fn$ and prime $\fp\parallel \fn$, we have 
\begin{align*} 
&W_\fn[\fd]=[\fn/\fd], \\ 
&W_\fm[\fn]=[\fn/\fm], \\ 
&W_\fp[\fd]=\begin{cases}
[\fd/\fp] &\text{if $\fp$ divides $\fd$};\\
[\fd\fp] &\text{otherwise}. 
\end{cases} 
\end{align*}  
\end{enumerate}
\end{cor}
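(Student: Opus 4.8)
The plan is to deduce all three parts from Lemmas~\ref{lemCD1} and~\ref{lemWcusp}, the decisive point being that $\deg(\fn)=3$ forces $\widetilde{\fd}:=\gcd(\fd,\fn/\fd)$ to have degree at most $1$ for every monic divisor $\fd\mid\fn$: indeed $\widetilde{\fd}^2\mid\fn$, so $2\deg(\widetilde{\fd})\leq 3$. Hence $A/\widetilde{\fd}$ is either the zero ring or a field isomorphic to $\F_q$, and in either case every unit of $A/\widetilde{\fd}$ has the form $\alpha\bmod\widetilde{\fd}$ with $\alpha\in\F_q^\times$. For part (i): by Lemma~\ref{lemCD1}(i) every cusp has a representative $\begin{pmatrix} a\\ b\end{pmatrix}$ with $a,b$ monic, $b\mid\fn$, and $\gcd(a,\fn)=1$; then $a\bmod\widetilde{b}$ is a unit, so $a\equiv\alpha\pmod{\widetilde{b}}$ for some $\alpha\in\F_q^\times$, and the equivalence criterion of Lemma~\ref{lemCD1}(i) shows that $\begin{pmatrix} a\\ b\end{pmatrix}$ and $\begin{pmatrix} 1\\ b\end{pmatrix}$ define the same cusp. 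Thus every cusp equals $[\fd]$ for some monic $\fd\mid\fn$, and this $\fd$ is unique, since by Lemma~\ref{lemCD1}(i) again $\begin{pmatrix} 1\\\fd\end{pmatrix}$ and $\begin{pmatrix} 1\\\fd'\end{pmatrix}$ are $\G_0(\fn)$-equivalent only when $\fd=\fd'$. This also gives (ii): by Lemma~\ref{lemCD1}(iii) the cusps sharing a fixed bottom entry form a single $\Gal(\bar F/F)$-orbit, and we have just shown each such orbit is a singleton, so every cusp is $F$-rational. (Alternatively, the rationality criterion in Lemma~\ref{lemCD1}(iii) reads $\deg(\widetilde b)\leq 1$, or $q=2$ and $\widetilde b=T^2+T$; the latter cannot occur as $\deg(T^2+T)=2$, and the former always holds here.)

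For part (iii), each of the three formulas concerns $W_\fm$ with $\fm\parallel\fn$ — including $W_\fn$, where $\fn/\fn=(1)$ — so the hypotheses of Lemma~\ref{lemWcusp} are met. Writing the image cusp as $[\fd']$ (legitimate by part (i), since the cusp is determined by its bottom entry, which is a monic divisor of $\fn$), Lemma~\ref{lemWcusp} gives
$$
\gcd(\fd,\fm)\cdot\gcd(\fd',\fm)=\fm\qquad\text{and}\qquad \gcd(\fd,\fn/\fm)=\gcd(\fd',\fn/\fm).
$$
Since $\gcd(\fm,\fn/\fm)=1$, every monic divisor of $\fn$ is the product of its gcd with $\fm$ and its gcd with $\fn/\fm$, so these two equations determine $\fd'$. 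Specializing: for $\fm=\fn$ we have $\gcd(\fd,\fn)=\fd$ and $\gcd(\fd',\fn)=\fd'$, so $\fd\fd'=\fn$ and $\fd'=\fn/\fd$; for the starting cusp $[\fn]$ and any $\fm\parallel\fn$ one has $\gcd(\fn,\fm)=\fm$, hence $\gcd(\fd',\fm)=1$ and $\gcd(\fd',\fn/\fm)=\gcd(\fn,\fn/\fm)=\fn/\fm$, so $\fd'=\fn/\fm$; and for a prime $\fp\parallel\fn$ with starting cusp $[\fd]$, if $\fp\mid\fd$ then $\gcd(\fd,\fp)=\fp$ forces $\gcd(\fd',\fp)=1$ and $\gcd(\fd',\fn/\fp)=\fd/\fp$, whence $\fd'=\fd/\fp$, while if $\fp\nmid\fd$ then $\gcd(\fd',\fp)=\fp$ and $\gcd(\fd',\fn/\fp)=\fd$, whence $\fd'=\fd\fp$.

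There is no genuine difficulty here: the corollary is bookkeeping layered on top of Lemmas~\ref{lemCD1} and~\ref{lemWcusp}, which carry the real content. The points that require care are tracking the $\F_q^\times$-scaling in Lemma~\ref{lemCD1}(i) so that the cusp equivalence collapses to ``equal bottom entries'' in this degree range, checking that the exceptional rational-cusp case $q=2$, $\widetilde{b}=T^2+T$ cannot arise when $\deg(\fn)=3$, and verifying in part (iii) that the hypotheses of Lemma~\ref{lemWcusp} (namely $\fm$ a nontrivial divisor of $\fn$ with $\gcd(\fm,\fn/\fm)=1$) are satisfied in each of the three cases, which they are.
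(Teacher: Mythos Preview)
Your proof is correct and follows exactly the approach the paper indicates: parts (i) and (ii) from Lemma~\ref{lemCD1} (using the key observation that $\deg(\widetilde\fd)\le 1$ forces each bottom-entry class to be a singleton), and part (iii) from Lemma~\ref{lemWcusp}. The paper's own proof is simply the one-liner ``(i) and (ii) follow from Lemma~\ref{lemCD1}, and (iii) follows from Lemma~\ref{lemWcusp}''; you have supplied the details the paper leaves to the reader.
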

\begin{proof}
(i) and (ii) follow from Lemma \ref{lemCD1}, and (iii) follows from Lemma \ref{lemWcusp}. 
\end{proof}

\begin{notn} Assume $\deg(\fn)=3$. 
Let $[\infty]:=[\fn]$. For a monic divisor $\fd$ of $\fn$, let $c_\fd:=[\fd]-[\infty]\in \cC(\fn)$. Note that 
$c_\fd$'s generate $\cC(\fn)$.  
\end{notn}

\begin{lem}\label{lem2.4}
If $\deg(\fn) = 3$, then $\cC(\fn)$ is Eisenstein. 
\end{lem}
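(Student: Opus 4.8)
The plan is to show that the Hecke operator $T_\fp$ acts on each generator $c_\fd = [\fd]-[\infty]$ of $\cC(\fn)$ as multiplication by $|\fp|+1$, for every prime $\fp\nmid\fn$. Since $T_\fp$ is an endomorphism of $J_0(\fn)$ induced by a correspondence, and the correspondence sends a point $P$ to an effective divisor of degree $|\fp|+1$, its effect on the cusps is controlled by how the degeneracy maps $X_0(\fn\fp)\to X_0(\fn)$ (forgetting the $\fp$-structure, resp. dividing by it) permute the cusps. So the first step is to recall, from $\S\ref{ssDMC}$ and Lemma \ref{lemCD1}, the standard description of $T_\fp$ on divisors supported on cusps: for $\fp\nmid\fn$ prime, $T_\fp[\fd]$ equals the pushforward along one degeneracy map of the pullback along the other, and since $\fp$ is coprime to $\fn$ the cusp $[\fd]$ of $X_0(\fn)$ has a unique cusp above it of each "$\fp$-type" — concretely, $T_\fp[\fd] = (\text{new cusp of }X_0(\fn\fp)\text{ pushed to }X_0(\fn)) + |\fp|\cdot(\text{another pushed cusp})$, and because $\fp\nmid\fn$ both pushforwards land back on $[\fd]$ itself. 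This gives $T_\fp[\fd] = (|\fp|+1)[\fd]$ as divisors on $X_0(\fn)$.

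Granting that computation, the lemma is immediate: $T_\fp c_\fd = T_\fp([\fd]-[\infty]) = (|\fp|+1)[\fd] - (|\fp|+1)[\infty] = (|\fp|+1)c_\fd$, so every generator of $\cC(\fn)$ — hence all of $\cC(\fn)$ — is Eisenstein. Note the hypothesis $\deg(\fn)=3$ enters only through Corollary \ref{cor2.5}(i), which guarantees that the cusps are exactly the $[\fd]$ for $\fd\mid\fn$ and are all rational, so that no extra Galois-conjugation complications arise and the generators $c_\fd$ are literally the $F$-rational differences of cusps; the Eisenstein property of $T_\fp$ on cuspidal divisors is in fact a general phenomenon, but stating it this way keeps the argument self-contained for the case at hand.

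The main obstacle is purely bookkeeping: one must verify that for $\fp\nmid\fn$ the two degeneracy maps act on the cusp set of $X_0(\fn)$ (equivalently, on the orbits of $\G_0(\fn)$ on $\p^1(F)$) in a way that fixes each $[\fd]$ set-theoretically, so that the correspondence $T_\fp$ restricted to cuspidal divisors is genuinely scalar rather than merely scalar "up to terms supported on other cusps." Since $\fp$ and $\fn$ are coprime, the natural maps $X_0(\fn\fp)\to X_0(\fn)$ induce a bijection on cusps compatible with $\fd\mapsto\fd$ (the $\fp$-part of the cusp label is forced to be trivial on the $X_0(\fn)$ side), and this makes the verification routine; alternatively one can invoke the known fact (e.g.\ \cite{GekelerIJM}) that $\cC(\fn)$ is annihilated by the Eisenstein ideal. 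I would present the degeneracy-map computation explicitly for the five cases of Corollary \ref{cor2.5}, where it reduces to checking $W_\fp$- and divisor-combinatorics already recorded in Corollary \ref{cor2.5}(iii).
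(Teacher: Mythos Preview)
Your proposal is correct and shares the same underlying idea as the paper's proof: show that $T_\fp[\fd]=(|\fp|+1)[\fd]$ for every cusp $[\fd]$ and every prime $\fp\nmid\fn$. However, the paper's execution is considerably more direct than what you outline. Rather than passing through degeneracy maps $X_0(\fn\fp)\to X_0(\fn)$ and analysing how cusps behave under pullback and pushforward, the paper simply uses the explicit matrix description of $T_\fp$ from (\ref{eqDefTm}) together with the fact (Corollary~\ref{cor2.5}(i)) that each cusp is represented by $\begin{pmatrix}1\\\fd\end{pmatrix}$ with $\fd\mid\fn$. One then checks in a single line that
\[
\begin{pmatrix}1&u\\0&\fp\end{pmatrix}\begin{pmatrix}1\\\fd\end{pmatrix}
\quad\text{and}\quad
\begin{pmatrix}\fp&0\\0&1\end{pmatrix}\begin{pmatrix}1\\\fd\end{pmatrix}
\]
both lie in the $\G_0(\fn)$-orbit of $\begin{pmatrix}1\\\fd\end{pmatrix}$ (again by Lemma~\ref{lemCD1}(i) and Corollary~\ref{cor2.5}(i), since there is exactly one cusp with second coordinate $\fd$). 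Summing over the $|\fp|+1$ coset representatives gives $T_\fp[\fd]=(|\fp|+1)[\fd]$ immediately, with no case analysis across the five types of $\fn$. Your suggestion to invoke Corollary~\ref{cor2.5}(iii) is a slight misdirection: that part concerns the Atkin--Lehner involutions $W_\fp$ for $\fp\mid\fn$, which play no role here.
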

\begin{proof}
By Lemma~\ref{lemCD1} (i) and Corollary~\ref{cor2.5} (i), we observe that for each cusp $c$ of $X_0(\fn)$,
$$\begin{pmatrix}1&u\\0&\fp \end{pmatrix} c = 
\begin{pmatrix} \fp&0\\0&1\end{pmatrix} c = c, \quad \text{for any $u$ and $\fp$ with $\fp \nmid \fn$, $\deg(u)<\deg(\fp)$}. 
$$
This implies that $T_\fp c=(|\fp|+1) c$, hence $\cC(\fn)$ is Eisenstein. 
\end{proof}

\begin{rem}\label{rem3.6}
(1) In general, $\cC(\fn)$ may not be Eisenstein (see Example \ref{exmP2}), but $\cC(\fn)$ is 
Eisenstein if $\fn$ is square-free (see the proof of Lemma 3.1 in \cite{PW}). \\ 
(2) Let $F(\fn)$ be the field of $\fn$-division points of the Carlitz module. It is an abelian extension of $F$ 
with Galois group $(A/\fn)^\times$. Let $F_+(\fn)$ be the maximal subfield of $F(\fn)$ in which $\infty$ 
totally splits. By a result of Gekeler (cf. \cite[Thm. 4.6]{Invariants}) the cusps of $X_0(\fn)$ are 
$F_+(\fn)$-rational points. Hence $\cC(\fn)$ is a subgroup of $F_+(\fn)$-rational points of $J_0(\fn)$. 
\end{rem}

For the rest of this section we compute the group structure of $\cC(\fn)$ for $\fn$ of degree $3$. Our strategy 
for doing this is the following. 
Take a prime divisor $\fp$ of $\fn$ and consider the functorial morphism $X_0(\fn)\to X_0(\fn/\fp)$ discussed in $\S$\ref{ssDMC}. 
Since $\deg(\fn/\fp)\leq 2$ and 
$[\infty]$ is rational, $X_0(\fn/\fp)$ is isomorphic to the projective line over $F$. 
The pullbacks of principal divisors on $X_0(\fn/\fp)$ supported at the cusps give relations between the cuspidal divisors. 
Next, by \cite{SchweizerHE} and \cite{Uber}, $X_0(\fn)$ is hyperelliptic (if $\deg(\fn)=3$), and the Atkin-Lehner 
involution $W_\fn$ is the hyperelliptic involution. This gives another morphism $X_0(\fn)\to \p^1_{F}\cong X_0(\fn)/W_\fn$, 
from which one deduces an extra relation for the cuspidal divisors. The relations that we obtain from these 
calculations give the generators of $\cC(\fn)$ and upper bounds on the orders of these generators. 

Next, we compute $\Phi_\infty$. One can compute the component groups by using a theorem of Raynaud \cite[Thm. 9.6/1]{NM},  
assuming the structure of the special fibre of a regular model of $X_0(\fn)$ over $\cO_\infty$ is known.  
Such a model can be obtained from the rigid-analytic uniformization of this curve; see \cite[$\S$4.2]{PapikianAIF}. 
More precisely, the structure of the special fibre $X_0(\fn)_{\F_\infty}$ of the minimal regular model of $X_0(\fn)$ 
over $\cO_\infty$ can be deduced from the structure of the quotient graph $\G_0(\fn)\bs \sT$ and the stabilizers of the edges; 
we refer to \cite[$\S$5.2]{PapikianJNT} for a more detailed explanation and a carefully worked out example. 
The quotient graphs $\G_0(\fn)\bs \sT$ are described in Section \ref{sHA}. 

Finally, we compute the canonical specialization $\wp_\infty: \cC(\fn)\to \Phi_\infty$ by using the relative position 
of the cusps on $\G_0(\fn)\bs \sT$. (Note that by Remark \ref{rem3.6} $\cC(\fn)$  
is $F_\infty$-rational, so $\wp_\infty$ is defined on $\cC(\fn)$.) This gives lower bounds for the orders of generators of $\cC(\fn)$, 
which turn out to match the previous upper bounds. 


\subsection{$\fn$ is irreducible} In this case $X_0(\fn)$ has two cusps, $[1]$ and $[\infty]$. 
Hence $\cC(\fn)$ is cyclic, generated by $c_1$. By \cite[(5.11)]{Uber}, $\cC(\fn)=\Z/(q^2+q+1)\Z$. 

$X_0(\fn)$ has a regular model over $\cO_\infty$ whose special fibre is depicted in Figure \ref{Fig1}. 

\begin{figure}[h]
\begin{tikzpicture}[scale=0.7, inner sep=.3mm]
\draw[thick, rounded corners=20pt]
(1,1) -- (4,4) -- (6,2) -- (8,4) -- (10,2);
\draw[thick, rounded corners=20pt]
(1,5) -- (4,2) -- (6,4) -- (8,2) -- (10,4);
\draw[thick, dashed] (1.5, 1) .. controls (0,3) .. (1.5, 5);
\node at (2.2, 2) [label=right:$Z$] {};
\node at (2.2, 4) [label=right:$Z'$] {};
\end{tikzpicture}
\caption{$X_0(\fn)_{\F_\infty}$: $\fn$ irreducible}\label{Fig1}
\end{figure}

\begin{figure}[h]
\begin{tikzpicture}[scale=0.7, inner sep=.3mm]
\draw[thick] (0,2) -- (2,0); 
\draw[thick] (1,0) -- (3,2); 
\draw[thick] (2,2) -- (4,0); 
\draw[thick] (3,0) -- (5,2); 
\draw[thick] (4,2) -- (6,0); 
\node at (6, 1) [circle,draw,fill=black] {};
\node at (6.5, 1) [circle,draw,fill=black] {};
\node at (7, 1) [circle,draw,fill=black] {};
\draw[thick] (7,2) -- (9,0); 
\draw[thick] (8,0) -- (10,2); 
\node at (0, 2) [label=left:$E_1$] {};
\node at (1, 0) [label=left:$E_2$] {};
\node at (10, 2) [label=right:$E_q$] {};
\end{tikzpicture}
\caption{Chain of projective lines}\label{Fig2}
\end{figure}

Here $Z, Z'$ are projective lines defined over $\F_\infty\cong \F_q$ intersecting each other transversally in $q$ points. The dashed line is a chain 
of $q$ projective lines defined over $\F_q$ as in Figure \ref{Fig2}; here $E_1$ intersects $E_2$ transversally at an $\F_q$-rational point, 
$E_2$ intersects $E_3$, etc. Finally, $Z$ intersects the chain only at $E_q$ and $Z'$ intersects only $E_1$,  
both transversally.  The reductions of the cusps $[1]$ and $[\infty]$ lie on $E_1$ and $E_q$, respectively, 
away from the points of intersection of these lines with $Z',E_2$ and $Z,E_{q-1}$, respectively. All of this 
follows from $\S$\ref{ss31}. 

Let $B^0$ be the free abelian group with generators $z:=Z-Z'$ and $e_i:=E_i-Z'$, $1\leq i\leq q$. 
Then $\Phi_\infty$ is isomorphic to the quotient of $B^0$ by the relations coming from the intersection 
pairing of irreducible components $Z,Z',E_1, \dots, E_q$ with $X_0(\fn)_{\F_\infty}$; cf. \cite[$\S$4.2]{PapikianJNT}. 
These relations are the following:
$$
e_i=ie_1\quad  (1\leq i\leq q), \quad e_1=-qz, \quad e_q=(q+1)z. 
$$
This implies that $\Phi_\infty$ is generated by $z$ modulo a single relation $(q^2+q+1)z=0$, so $\Phi_\infty\cong \Z/(q^2+q+1)\Z$. 

Now, $\wp_\infty(c_1)=e_1-e_q=-qz-(q+1)z=-(2q+1)z=((q^2+q+1)-(2q+1))z=q(q-1)z$. Since $q$ is coprime to $q^2+q+1$, we get 
the exact sequence 
\begin{equation}\label{eqGekDocExm}
\xymatrix{0 \ar[r] & \Z/t\Z\ar[r]  &\cC(\fn) \ar[r]^-{\wp_\infty}  & \Phi_\infty\ar[r] & \Z/t\Z \ar[r] & 0,}
\end{equation}
$$
t=\mathrm{gcd}(q-1, q^2+q+1) =
\begin{cases}
3 & \text{if $q\equiv 1\ \mod\ 3$},\\
1 & \text{otherwise}.
\end{cases} 
$$ 
In particular, even though $\cC(\fn)$ and $\Phi_\infty$ are isomorphic as abelian groups, the canonical specialization map $\wp_\infty$ 
is not always an isomorphism. 



\subsection{$\fn=T^3$} The cusps of $X_0(\fn)$ are $[1], [T], [T^2], [\infty]$. 
Hence $\cC(\fn)$ is generated by $c_1$, $c_T$, and $c_{T^2}$. 
The canonical morphism $f: X_0(T^3)\to X_0(T^2)$ has degree $q$.  With slight abuse of notation, denote the cusps of $X_0(T^2)$ 
by $[1], [T], [T^2]$. One computes that $f([1])=[1]$ with ramification index $q$, $f([T])=[T]$ 
with ramification index $q$, $f([T^2])=[T^2]$ with ramification index $q-1$, and $f([\infty])=[T^2]$ 
with ramification index $1$; cf. \cite[p. 196]{Discriminant}. Since $X_0(T^2)\cong \p^1_F$, there 
is a function on $X_0(T^2)$ with divisor $[1]-[T^2]$, and a function with divisor $[T]-[T^2]$. Hence in $\Pic^0(X_0(\fn))$ we have 
\begin{align*}
0 &=f^\ast([1]-[T^2])=q[1]-((q-1)[T^2]+[\infty])=qc_1-(q-1)c_{T^2},\\ 
0 &=f^\ast([T]-[T^2])=q[T]-((q-1)[T^2]+[\infty])=qc_T-(q-1)c_{T^2}.
\end{align*}
Next,  by Corollary \ref{cor2.5}, $W_\fn([\infty])=[1]$ and $W_\fn([T])=[T^2]$. Let $P$ and $Q$ be the images of $[T]$ and $[\infty]$ 
under the quotient map $X_0(\fn)\to X_0(\fn)/W_\fn\cong \p^1_F$ of degree $2$. Pulling back the function with divisor $P-Q$ 
on $\p^1_F$, we get 
$$
0=[T]+[T^2]-[1]-[\infty]=c_T+c_{T^2}-c_1. 
$$
This relation, combined with the previous two, implies that $\cC(\fn)$ is generated by $c_T$ which has order dividing $q^2$. 
More precisely, $c_{T^2}=-qc_T$, $c_1=(q-1)c_T$, and $q^2c_T=0$. Note that this implies that $c_1$ also generates $\cC(\fn)$. 

The calculation of $\Phi_\infty$ and the specialization map $\wp_\infty$ is very similar to the case of irreducible $\fn$. 
In fact, $X_0(T^3)$ has a regular model over $\cO_\infty$ whose special fibre has the same structure 
as Figure \ref{Fig1}, except $Z$ and $Z'$ intersect in $q-1$ points. The cusps $[1]$ and $[\infty]$ 
again reduce to $E_1$ and $E_q$, respectively. (The reductions of $[T]$ and $[T^2]$ lie $Z$ and $Z'$.) 
One computes that $\Phi_\infty$ is generated by $z$, which has order $q^2$, and $e_1=-(q-1)z$. Now 
$$
\wp_\infty(c_1)=e_1-e_q=e_1-qe_1=(1-q)e_1=(q-1)^2z. 
$$
Since $q-1$ is coprime to $q$, we conclude that $\wp_\infty(c_1)$ 
generates $\Phi_\infty$. On the other hand, $c_1$ generates $\cC(\fn)$ and has order dividing $q^2$, so  
$$
\cC(\fn)\overset{\wp_\infty}{\cong}\Phi_\infty\cong \Z/q^2\Z. 
$$
 

\subsection{$\fn=T^2(T-1)$}\label{ssCDGT2} There are $6$ cusps given by the divisors of $T^2(T-1)$. 
By calculations 
similar to the previous case one shows that $\cC(\fn)$ is generated by $c_T$, and $c_T$ has order dividing $q(q^2-1)$. 

\begin{figure}[h]
\begin{tikzpicture}[scale=0.7, inner sep=.3mm]
\draw[thick, rounded corners=20pt]
(1,1) -- (4,4) -- (6,2) -- (8,4) -- (11,1);
\draw[thick, rounded corners=20pt]
(1,5) -- (4,2) -- (6,4) -- (8,2) -- (11,5);
\draw[thick, dashed] (1.5, 1) .. controls (0,3) .. (1.5, 5);
\draw[thick, dashed] (10.5, 1) .. controls (12,3) .. (10.5, 5);
\node at (6, 3.7) [label=above:$Z'$] {};
\node at (6, 2.3) [label=below:$Z$] {};
\end{tikzpicture}
\caption{$X_0(T^2(T-1))_{\F_\infty}$}\label{Fig3}
\end{figure}

The curve $X_0(\fn)$ has a regular model over $\cO_\infty$ whose special fibre is depicted in Figure \ref{Fig3}. 
Here $Z$ and $Z'$ are projective lines over $\F_\infty$ intersecting in $(q-2)$ points, and there are 
two chains of projective lines of length $q$ as in Figure \ref{Fig2}. Label the irreducible components so that 
$Z'$ intersects $E_1$. By \cite[Thm. 4.1]{PapikianJNT}, $\Phi_\infty\cong \Z/q(q^2-1)\Z$ is generated by $e_1:=E_1-Z'$. 
There is a cusp whose reduction lies on $E_1$ and there is a cusp whose reduction lies on $Z'$. In particular, $e_1\in \wp_\infty(\cC(\fn))$, 
so $\wp_\infty$ is surjective. Since the order of $\cC(\fn)$ is at most $q(q^2-1)$, we conclude 
$$
\cC(\fn)\overset{\wp_\infty}{\cong}\Phi_\infty\cong \Z/q(q^2-1)\Z. 
$$


\subsection{$\fn=T(T-1)(T-c), c\in \F_q\setminus \{0,1\}$}\label{ssCDGxyz} 
There are $8$ cusps indexed by the divisors of $\fn$. 
To simplify the notation, put $x:=T$, $y:=T-1$, $z:=T-c$. Using the strategy outlined earlier, one shows that the 
following relations hold for the $7$ generators of $\cC(\fn)$: 
$$
c_{yz}=c_1-c_x,\quad c_{xz}=c_1-c_y,\quad c_{xy}=c_1-c_z,  
$$
$$
c_z=(1-q)c_1+qc_x+qc_y, 
$$
$$
(q+1)c_x=(q+1)c_y=(q+1)c_z=(q-1)(q+1)c_1=0. 
$$
In particular, $\cC(\fn)$ is generated by $c_1$, $c_x$, $c_y$. 

\begin{figure}[h]
\begin{tikzpicture}[scale=0.7, inner sep=.3mm]
\draw[thick, rounded corners=20pt]
(1,1) -- (4,4) -- (6,2) -- (8,4) -- (11,1);
\draw[thick, rounded corners=20pt]
(1,5) -- (4,2) -- (6,4) -- (8,2) -- (11,5);
\draw[thick, dashed] (1.5, 1) .. controls (0,3) .. (1.5, 5);
\draw[thick, dashed] (2.3, 1.5) .. controls (1,3) .. (2.3, 4.5);
\draw[thick, dashed] (10.5, 1) .. controls (12,3) .. (10.5, 5);
\draw[thick, dashed] (9.7, 1.5) .. controls (11,3) .. (9.7, 4.5);
\node at (6, 3.7) [label=above:$Z'$] {};
\node at (6, 2.3) [label=below:$Z$] {};
\end{tikzpicture}
\caption{$X_0(xyz)_{\F_\infty}$}\label{Fig4}
\end{figure}

$X_0(\fn)$ has a regular model over $\cO_\infty$ whose special fibre is depicted in Figure \ref{Fig4}. The two 
irreducible components $Z$ and $Z'$ are projective lines intersecting in $q-3$ points, and there are $4$ 
chains of projective lines of length $q$ as in Figure \ref{Fig2}. Label the projective lines in the first chain 
by $E_1, \dots, E_q$, the second by $F_1, \dots, F_q$, the third by $G_1, \dots, G_q$, and the fourth by $H_1, \dots, H_q$. 
Moreover, we can assume that $Z'$ intersects $E_1,F_1, G_1, H_1$. Similar to the previous cases, one computes that $\Phi_\infty$ 
is generated by $z:=Z-Z'$, $e_i:=E_i-Z'$, $f_i:=E_i-Z'$, $g_i:=E_i-Z'$, $h_i:=H_i-Z'$ ($1\leq i\leq q$) modulo 
the relations:
$$
e_i=i e_1, \quad f_i=i f_1, \quad g_i=i g_1, \quad h_i=i h_1 \qquad (1\leq i\leq q),
$$
$$
z=(q+1)e_1=(q+1)f_1=(q+1)g_1=(q+1)h_1,
$$
$$
(q+1)z=e_q+f_q+g_q+h_q,
$$
$$
(q-3)z+e_1+f_1+g_1+h_1=0.
$$
If we let $e:=e_1, f:=f_1, g:=g_1$, then these relations imply that 
$$
\Phi_\infty\cong \langle f-e\rangle\oplus \langle g-e\rangle\oplus \langle e\rangle\cong \Z/(q+1)\Z\oplus \Z/(q+1)\Z\oplus \Z/(q-1)^2(q+1)\Z. 
$$

The reductions of the cusps $[\infty], [x], [y], [z]$ lie on $E_1, F_1, G_1, H_1$, respectively, and the 
reductions of the cusps $[1], [yz], [xz], [xy]$ lie on $E_q, F_q, G_q, H_q$, respectively. Thus, 
$$
\wp_\infty(c_1)=E_q-E_1=e_q-e_1=(q-1)e,  
$$
$$
\wp_\infty(c_x)=F_1-E_1=f-e \quad \text{and}\quad \wp_\infty(c_y)=G_1-E_1=g-e. 
$$
This implies 
$$
\cC(\fn)\cong \langle c_x\rangle\oplus \langle c_y\rangle\oplus \langle c_1\rangle\cong \Z/(q+1)\Z\oplus \Z/(q+1)\Z\oplus \Z/(q^2-1)\Z
$$
and 
$$
\xymatrix{0 \ar[r] & \cC(\fn) \ar[r]^-{\wp_\infty}  & \Phi_\infty\ar[r] & \Z/(q-1)\Z \ar[r] & 0.}
$$

\begin{rem}
By \cite[Thm. 5.3]{PW}, $\Phi_x\cong \Phi_y\cong \Phi_z\cong \Z/(q+1)\Z$ and the canonical specialization maps 
from $\cC(xyz)$ into these component groups are surjective. 
\end{rem}


\section{Hecke algebras of small levels}\label{sHA}
We will need the descriptions of the graphs $\G_0(\fn)\bs \sT$ for $\deg(\fn)=3$. 
These graphs already appear in \cite{GekelerKleinem}. For the sake of completeness, and also because 
we will need explicit representatives for the edges $E(\G_0(\fn)\bs \sT)^+$, and need to know the orders 
of stabilizers of the edges (this was used in Section \ref{sCDG}), 
we describe these graphs below.  The graphs can be computed using the algorithm in \cite{GN}. 

\subsection{$\fn$ is irreducible}\label{ss31}
There are $q$ edges 
$$
b_u = \begin{pmatrix} \pi^3 & \pi + u \pi^2 \\0&1\end{pmatrix}, \quad u \in \F_q. 
$$

\begin{figure}[h]
\begin{tikzpicture}[scale=2, ->, >=stealth, semithick, inner sep=.5mm, vertex/.style={circle, fill=black}]

\node[vertex] (00) at (0, 0) {};
\node[vertex] (01) at (0, 1) {};
\node[vertex] (11) at (1, 1) {};
\node[vertex] (10) at (1, 0) {};
\node[vertex] (c1) at (-1, 0) {};
\node[vertex] (c2) at (2, 0) {};
\node at (.5, 1.05) {$\vdots\ b_u$};

\path[]
(11) edge[bend right]  (01)
(11) edge[bend left]  (01)
(01) edge node[auto,swap] {$a_\infty$} (00)
(10) edge node[auto] {$d_\infty$} (00)
(11) edge node[auto] {$a_1$} (10)
(00) edge[dashed] node[auto] {$s_\infty$} (c1)
(c2) edge[dashed] node[auto] {$s_1$} (10);
  
\end{tikzpicture}
\caption{$\G_0(\fn)\bs \sT$: $\fn$ irreducible}\label{Fig5}
\end{figure}

The dashed edges 
$$
s_\infty=\begin{pmatrix} \pi & 0 \\0&1\end{pmatrix}, \quad s_1=\begin{pmatrix} \pi^3 & 0 \\0&1\end{pmatrix}
$$
indicate that they are the first edges on a half-line corresponding to the cusps $[\infty]$ and $[1]$, respectively. Finally, 
$$
a_\infty=\begin{pmatrix} \pi^2 & \pi \\0&1\end{pmatrix}, \quad a_1=\begin{pmatrix} \pi^3 & \pi^2 \\0&1\end{pmatrix}, 
\quad 
d_\infty=\begin{pmatrix} \pi^2 & 0 \\0&1\end{pmatrix}. 
$$

A small calculation shows that 
\begin{equation}\label{eqweight}
w(a_\infty)=w(a_1)=q-1
\end{equation}
and the weights of all other edges in $(\G_0(\fn)\bs\sT)^0$ are $1$ (in particular, $w(\overline{a_\infty})=w(\overline{a_1})=1$). Also, 
the stabilizers in $\G_0(\fn)$ of preimages of all (non-oriented) edges in $(\G_0(\fn)\bs\sT)^0$ are isomorphic to $\F_q^\times$, except 
\begin{equation}\label{eqstab}
\#\Stab_{\G_0(\fn)}(d_\infty)=(q-1)^2. 
\end{equation}

\subsection{$\fn=T^3$} This case is similar to the case when $\fn$ is irreducible. We have 
$$
s_T=b_0=\begin{pmatrix} \pi^3 & \pi \\0&1\end{pmatrix}, \quad s_{T^2}=\begin{pmatrix} \pi^4 & \pi^2 \\0&1\end{pmatrix}. 
$$
The weights and stabilizers of edges in $(\G_0(\fn)\bs\sT)^0$ are the same as in (\ref{eqweight}) and (\ref{eqstab}). 
\begin{figure}[h]
\begin{tikzpicture}[scale=2, ->, >=stealth, semithick, inner sep=.5mm, vertex/.style={circle, fill=black}]

\node[vertex] (00) at (0, 0) {};
\node[vertex] (01) at (0, 1) {};
\node[vertex] (11) at (1, 1) {};
\node[vertex] (10) at (1, 0) {};
\node[vertex] (c1) at (-1, 0) {};
\node[vertex] (c2) at (2, 0) {};
\node[vertex] (c3) at (-1, 1) {};
\node[vertex] (c4) at (2, 1) {};
\node at (.5, 1.05) {$\vdots\ b_u$};

\path[]
(11) edge[bend right]  (01)
(11) edge[bend left]  (01)
(01) edge node[auto,swap] {$a_\infty$} (00)
(10) edge node[auto] {$d_\infty$} (00)
(11) edge node[auto] {$a_1$} (10)
(00) edge[dashed] node[auto] {$s_\infty$} (c1)
(c2) edge[dashed] node[auto] {$s_1$} (10)
(c3) edge[dashed] node[auto] {$s_T=b_0$} (01)
(c4) edge[dashed] node[auto, swap] {$s_{T^2}$} (11);
  
\end{tikzpicture}
\caption{$\G_0(T^3)\bs \sT$}\label{Fig6}
\end{figure}

\subsubsection{$\fn=T^2(T-1)$} In this case the quotient graph looks like Figure \ref{Fig7}.  
\begin{figure}[h]
\begin{tikzpicture}[scale=2, ->, >=stealth, semithick, inner sep=.5mm, vertex/.style={circle, fill=black}]

\node[vertex] (00) at (0, 0) {};
\node[vertex] (01) at (0, 1) {};
\node[vertex] (11) at (1, 1) {};
\node[vertex] (10) at (1, 0) {};
\node[vertex] (02) at (0,2) {};
\node[vertex] (12) at (1, 2) {};
\node at (.5, 1.05) {$\vdots\ b_u$};
\node[vertex] (c1) at (-1, 0) {};
\node[vertex] (c2) at (2, 0) {};
\node[vertex] (c3) at (-1, 1) {};
\node[vertex] (c4) at (2, 1) {};
\node[vertex] (c5) at (-1, 2) {};
\node[vertex] (c6) at (2, 2) {};

\path[]
(11) edge[bend right]  (01)
(11) edge[bend left]  (01)
(01) edge node[auto,swap] {$a_\infty$} (00)
(10) edge node[auto] {$d_\infty$} (00)
(11) edge node[auto] {$a_1$} (10)
(02) edge node[auto, swap] {$b_1$} (01)
(12) edge node[auto, swap] {$d_{T-1}$} (02)
(12) edge node[auto] {$a_{T^2}$} (11)
(00) edge[dashed] node[auto] {$s_\infty$} (c1)
(c2) edge[dashed] node[auto] {$s_1$} (10)
(c3) edge[dashed] node[auto] {$s_T=b_0$} (01)
(c4) edge[dashed] node[auto, swap] {$s_{T(T-1)}$} (11)
(c5) edge[dashed] node[auto] {$s_{T-1}$} (02)
(c6) edge[dashed] node[auto, swap] {$s_{T^2}$} (12)
;
  
\end{tikzpicture}
\caption{$\G_0(T^2(T-1))\bs \sT$}\label{Fig7}
\end{figure}
The edges $b_u$ in the middle of the figure are indexed by $u\in \F_q\setminus\{0,1\}$. In particular, there are no such edges 
when $q=2$. The representatives for the edges $b_0, b_1, b_u, d_\infty, s_\infty, s_1$ are the same as earlier. 
In addition to those, we have  
$$
s_{T-1}=\begin{pmatrix} \pi^4 & (T-1)^{-1} \\0&1\end{pmatrix}, \quad s_{T(T-1)}=\begin{pmatrix} \pi^4 & (T(T-1))^{-1} \\0&1\end{pmatrix}, 
\quad 
s_{T^2}=\begin{pmatrix} \pi^5 & \pi^2 \\0&1\end{pmatrix}, 
$$
$$
a_{T^2}=\begin{pmatrix} \pi^4 & \pi^2 \\0&1\end{pmatrix}, \quad d_{T-1}=\begin{pmatrix} \pi^4 & \pi+\pi^2 \\0&1\end{pmatrix}. 
$$
The weights of all edges in $(\G_0(\fn)\bs\sT)^0$ are $1$, except 
$$
w(a_\infty)=w(a_1)=w(\overline{b_1})=w(\overline{a_{T^2}})=q-1. 
$$
Similarly, $\#\Stab_{\G_0(\fn)}(e)/\F_q^\times =1$ for all (non-oriented) edges, except for $e=d_\infty, d_{T-1}$, when it is $q-1$. 
 
\subsection{$\fn=T(T-1)(T-c), c\in \F_q\setminus \{0,1\}$}\label{ssLastCase} 
In this case the quotient graph looks like Figure \ref{Fig8}.  Denote $x:=T$, $y:=T-1$, $z:=T-c$. 
\begin{figure}[h]
\begin{tikzpicture}[scale=2, >=stealth, semithick, inner sep=.5mm, vertex/.style={circle, fill=black}, 
decoration={markings, mark=at position 0.6 with {\arrow{>}}}
]

\node[vertex] (00) at (0, 0) {};
\node[vertex] (01) at (0, 1) {};
\node[vertex] (11) at (1, 1) {};
\node[vertex] (10) at (1, 0) {};
\node[vertex] (02) at (0,2) {};
\node[vertex] (12) at (1, 2) {};
\node[vertex] (a) at (-.5, 2.5) {};
\node[vertex] (b) at (-2.5, 3) {};
\node[vertex] (c) at (1.5, 2.5) {};
\node[vertex] (d) at (3.5, 3) {};
\node at (.5, 1.06) {$\vdots\ b_u$};
\node[vertex] (c1) at (-1, 0) {};
\node[vertex] (c2) at (2, 0) {};
\node[vertex] (c3) at (-1.3, 2.5) {};
\node[vertex] (c4) at (-3.3, 3) {};
\node[vertex] (c5) at (-1, 2) {};
\node[vertex] (c6) at (2, 2) {};
\node[vertex] (c7) at (2.3, 2.5) {};
\node[vertex] (c8) at (4.3, 3) {};

\draw[postaction={decorate}] (a)-- node[auto, swap] {$b_1$}(01);
\draw[postaction={decorate}] (b)-- node[auto, swap] {$b_c$}(01);
\draw[postaction={decorate}] (c)-- node[auto] {$a_y'$}(11);
\draw[postaction={decorate}] (d)--node[auto] {$a_z'$}(11);
\draw[postaction={decorate}] (01) -- node[auto,swap] {$a_\infty$} (00);
\draw[->, dashed](00)-- node[auto] {$s_\infty$} (c1);
\draw[->, dashed](c2)-- node[auto] {$s_1$} (10) ;
\draw[postaction={decorate}] (11).. controls (.5, 1.2).. (01);
\draw[postaction={decorate}] (11) .. controls (.5, .8).. (01);
\draw[->](10)-- node[auto] {$d_\infty$} (00);
\draw[postaction={decorate}] (11)-- node[auto] {$a_1$} (10);
\draw[postaction={decorate}] (02) -- node[auto] {$b_0$} (01);
\draw[postaction={decorate}] (12) -- node[auto,swap] {$d_x$} (02);
\draw[postaction={decorate}] (12) --node[auto,swap] {$a_x'$} (11);
\draw[postaction={decorate}] (c) -- node[auto,swap] {$d_y$}(a);
\draw[postaction={decorate}] (d) -- node[auto,swap] {$d_z$}(b);
\draw[->, dashed](c5) -- node[near start, above] {$s_x$} (02);
\draw[->, dashed] (c6) -- node[near start, above] {$s_{yz}$} (12);
\draw[->, dashed](c3) -- node[auto] {$s_y$} (a);
\draw[->, dashed](c4) -- node[auto] {$s_z$} (b);
\draw[->, dashed](c7) -- node[auto, swap] {$s_{xz}$} (c);
\draw[->, dashed](c8) -- node[auto, swap] {$s_{xy}$} (d);

\end{tikzpicture}
\caption{$\G_0(xyz)\bs \sT$}\label{Fig8}
\end{figure}
The edges $b_u$ in the middle of the figure are indexed by $u\in \F_q\setminus\{0,1,c\}$. In particular, there are no such edges 
if $q=3$. 
For the cusps we have
$$
s_{\infty}=\begin{pmatrix} \pi & 0 \\0&1\end{pmatrix}, \quad s_{1}=\begin{pmatrix} \pi^3 & 0 \\0&1\end{pmatrix}, 
$$
$$
s_{x}=\begin{pmatrix} \pi^4 & x^{-1} \\0&1\end{pmatrix}, \quad s_{y}=\begin{pmatrix} \pi^4 & y^{-1} \\0&1\end{pmatrix}, \quad 
s_{z}=\begin{pmatrix} \pi^4 & z^{-1} \\0&1\end{pmatrix}, 
$$
$$
s_{xy}=\begin{pmatrix} \pi^5 & (xy)^{-1} \\0&1\end{pmatrix}, \quad s_{yz}=\begin{pmatrix} \pi^5 & (yz)^{-1} \\0&1\end{pmatrix}, \quad 
s_{xz}=\begin{pmatrix} \pi^4 & (xz)^{-1} \\0&1\end{pmatrix}.  
$$
Next,
$$
a_x:=b_0=\begin{pmatrix} \pi^3 & x^{-1} \\0&1\end{pmatrix}, \quad a_y:=b_1=\begin{pmatrix} \pi^3 & y^{-1} \\0&1\end{pmatrix}, \quad 
a_z:=b_c=\begin{pmatrix} \pi^3 & z^{-1} \\0&1\end{pmatrix}, 
$$
$$
a_x'=\begin{pmatrix} \pi^4 & (yz)^{-1} \\0&1\end{pmatrix}, \quad a_y'=\begin{pmatrix} \pi^4 & (xz)^{-1} \\0&1\end{pmatrix}, \quad 
a_z'=\begin{pmatrix} \pi^4 & (xy)^{-1} \\0&1\end{pmatrix}, 
$$
$$
a_\infty=\begin{pmatrix} \pi^2 & \pi \\0&1\end{pmatrix}, \quad a_\infty':=a_1=\begin{pmatrix} \pi^3 & \pi^2 \\0&1\end{pmatrix},
$$
$$
b_u=\begin{pmatrix} \pi^3 & \pi+u\pi^2 \\0&1\end{pmatrix}, \quad u\in \F_q\setminus\{0,1,c\}. 
$$
Finally,
$$
d_x=\begin{pmatrix} \pi^4 & \pi+c\pi^3 \\0&1\end{pmatrix}, \quad 
d_y=\begin{pmatrix} \pi^4 & \pi+\pi^2+c\pi^3 \\0&1\end{pmatrix}, \quad 
d_z=\begin{pmatrix} \pi^4 & \pi+c\pi^2+c\pi^3 \\0&1\end{pmatrix},
$$
$$
d_\infty=\begin{pmatrix} \pi^2 & 0 \\0&1\end{pmatrix}. 
$$
The weights of edges $a_\infty,a_\infty', \overline{a_x},\overline{a_x'}, \overline{a_y}, \overline{a_y'},\overline{a_z}, 
\overline{a_z'}$ are $q-1$; 
the weights of all other edges in $(\G_0(\fn)\bs\sT)^0$ are $1$. We have 
$$
\#\Stab_{\G_0(\fn)}(e)/\F_q^\times=q-1, \quad \text{if }e=d_\infty, d_x,d_y,d_z,
$$ 
and $\#\Stab_{\G_0(\fn)}(e)/\F_q^\times=1$ for all other (non-oriented) edges of $(\G_0(\fn)\bs\sT)^0$. 

\subsection{The pairing}\label{sPairing}
Consider the bilinear $\T(\fn)$-equivariant pairing 
\begin{align}\label{GPairing}
\T(\fn)\times \cH_{0}(\fn, \Z) \to \Z\\
\nonumber (T, f) \mapsto (f|T)^\ast(1).
\end{align} 

\begin{thm}\label{thmPP}
When $\deg(\fn)=3$, the pairing (\ref{GPairing}) is perfect. 
\end{thm}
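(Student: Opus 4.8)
The plan is to prove that \eqref{GPairing} is non-degenerate over $\Q$ by a formal argument, and then to upgrade this to perfectness over $\Z$ by an explicit computation based on the descriptions of the quotient graphs $\G_0(\fn)\bs\sT$ recorded in Section~\ref{sHA} (and, for $\fn=T\fp$ with $\deg\fp=2$, in \cite{PW}).

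\emph{Non-degeneracy over $\Q$.} From $\S$\ref{ssFE} we have $(f|T_\fm)^\ast(1)=|\fm|\,f^\ast(\fm)$, so the pairing is $\langle t,f\rangle=(tf)^\ast(1)$, the first Fourier coefficient of $tf$, and $\langle T_\fm t,f\rangle=\langle t,T_\fm f\rangle$ because $\T(\fn)$ is commutative. If $t$ is in the left kernel, then for all $f$ and all $\fm$ we get $|\fm|\,(tf)^\ast(\fm)=\langle t,T_\fm f\rangle=0$, hence every Fourier coefficient of $tf$ vanishes, $tf=0$, and $t=0$ because $\T(\fn)$ acts faithfully on $\cH_0(\fn,\Z)$. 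If $f$ is in the right kernel, then $|\fm|\,f^\ast(\fm)=\langle T_\fm,f\rangle=0$ for all $\fm$, so $f=0$. Thus \eqref{GPairing} is non-degenerate over $\Q$; in particular $\T(\fn)$, being a submodule of the free $\Z$-module $\End_\Z(\cH_0(\fn,\Z))$, is free of rank $g(\fn)=\rank_\Z\cH_0(\fn,\Z)$, and the induced maps $\T(\fn)\to\Hom(\cH_0(\fn,\Z),\Z)$ and $\cH_0(\fn,\Z)\to\Hom(\T(\fn),\Z)$ are injective with finite, isomorphic cokernels.

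\emph{Perfectness.} It remains to show these cokernels vanish, i.e.\ that the matrix of \eqref{GPairing} in $\Z$-bases has determinant $\pm1$. Since the operators $T_\fm$ span $\T(\fn)$ as a $\Z$-module (by the recursions of Proposition~\ref{propHOm}), the image of $\T(\fn)\to\Hom(\cH_0(\fn,\Z),\Z)$ is spanned by the functionals $f\mapsto(f|T_\fm)^\ast(1)$, and it suffices to prove that these span $\Hom(\cH_0(\fn,\Z),\Z)$ over $\Z$. I would verify this case by case, using the explicit graphs. In each case a $\Z$-basis of $\cH_0(\fn,\Z)\cong H_1(\G_0(\fn)\bs\sT,\Z)$ (via $\varphi\mapsto\varphi^\ast$, $\varphi^\ast(e)=n(e)\varphi(e)$; see $\S$\ref{ssGMP} and \cite{GN}) is read off from the cycles of the graph — most basis cycles have the form $b_u-b_{u'}$ inside a bundle of parallel edges $b_u$, while a few run through the edges $a_\infty$, $a_1$ and their analogues near the other cusps. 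Using \eqref{eqDefTm}, the identity $(f|T_\fp)^\ast(1)=|\fp|\,f^\ast(\fp)$, and the Fourier formula of $\S$\ref{ssFE}, one computes $(f|T_{T-a})^\ast(1)=q\,f^\ast(T-a)$ for $f$ in this basis and $a\in\F_q$ (the monic degree-one primes). After rewriting each tree edge $\left(\begin{smallmatrix}\pi^3 & u_1\pi+u_2\pi^2\\ 0 & 1\end{smallmatrix}\right)$ appearing in $f^\ast(T-a)$ as a $\G_0(\fn)$-translate of a named representative of $E(\G_0(\fn)\bs\sT)^+$ or of its reverse — and keeping track of the weights $n(e)$ — these numbers assemble into an integer matrix that is essentially a discrete Fourier transform over $\F_q$ of the tuple of edge-values of $f$. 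One then checks that this matrix has a $g(\fn)\times g(\fn)$ minor equal to $\pm1$ (equivalently, its maximal minors are coprime); this shows the $q$ functionals $f\mapsto(f|T_{T-a})^\ast(1)$ already span $\Hom(\cH_0(\fn,\Z),\Z)$ — recovering Theorem~\ref{thmMAIN4}(iii) along the way — so $\T(\fn)\to\Hom(\cH_0(\fn,\Z),\Z)$ is onto, and the pairing is perfect.

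The main obstacle is the bookkeeping in the last step: matching each edge $\left(\begin{smallmatrix}\pi^{2+\deg\fm} & u\\ 0 & 1\end{smallmatrix}\right)$ to the correct representative of $E(\G_0(\fn)\bs\sT)^+$ (or its reverse) with the correct weight $n(e)$, and, above all, controlling the prime $p=\mathrm{char}(F)$. A priori $f^\ast(\fm)$ lies only in $\Z[p^{-1}]$, so one must check both that the entries $q\,f^\ast(T-a)$ are $p$-integral — they are, being values of $f|T_{T-a}\in\cH_0(\fn,\Z)$ — and, more seriously, that the reduction of the pairing matrix modulo $p$ is still non-degenerate. This is the delicate case precisely for the non-square-free levels $\fn=T^3$ and $\fn=T^2(T-1)$, where $p$ divides $\#\bigl(\T(\fn)/\fE(\fn)\bigr)$ (so there is genuine $p$-torsion around and the $p$-part cannot be handled on formal grounds): the $\pm1$ minor has to be exhibited with attention to its reduction mod $p$. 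For primes $\ell\ne p$ the argument is cleaner, since every $|\fm|$ is a unit modulo $\ell$ and the reduced pairing is governed directly by Fourier coefficients.
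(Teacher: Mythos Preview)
Your outline is correct and follows exactly the paper's approach: show that the functionals $f\mapsto (f|T_{T-u})^\ast(1)$, $u\in\F_q$, already span $\Hom(\cH_0(\fn,\Z),\Z)$. But you stop short of the one computation that makes the argument work, and your worries about the $p$-part and about hunting for a $\pm1$ minor dissolve once you actually carry it out. The paper does this uniformly for all five level types: starting from the Fourier sum for $q(f|T_{T-\kappa})^\ast(1)=q^2 f^\ast(T-\kappa)$ and using only harmonicity at the two vertices adjacent to $d_\infty$ (which gives $f(a_1)=-\sum_u f(b_u)$), one finds
\[
(f|T_{T-u})^\ast(1)=f(b_u)\qquad(u\in\F_q).
\]
With the obvious basis $\{f_v\}$ of $\cH_0(\fn,\Z)$ determined by $f_v(b_u)=\delta_{v,u}$ (over $u\in\F_q$ in the square-free cases, over $u\in\F_q^\times$ otherwise, since then $b_0$ lies on a cusp), the pairing matrix is literally the identity; no discrete Fourier transform, no minor-chasing, and nothing special at $p$. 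So your ``main obstacle'' is not an obstacle at all --- the entries are visibly integers and the determinant is $1$ in every case, including $\fn=T^3$ and $\fn=T^2(T-1)$.
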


\begin{rem}
(1) In \cite{Analytical}, Gekeler proved that the pairing (\ref{GPairing}) is non-degenerate and becomes a perfect pairing after tensoring with $\Z[p^{-1}]$.
It is not known whether the pairing is perfect in general, without inverting $p$. \\
(2) In \cite{PW}, we already proved Theorem \ref{thmPP} and its corollaries for $\fn=T\fp$, where $\fp$ is prime of degree $2$. 
\end{rem}

\begin{proof}
First, observe that the map 
$f \mapsto (f(b_u))_{u\in \F_q}$
induces an isomorphism $\cH_0(\fn, \Z)\xrightarrow{\sim}\Z^{\oplus q}$ for $\fn$ irreducible or $\fn=T(T-1)(T-c)$. 
(It is clear from Figures \ref{Fig5} and \ref{Fig8} that any cycle in $\G_0(\fn)\bs\sT$ contains at least one of these edges.) 
Similarly, the map $f \mapsto (f(b_u))_{u\in \F_q^\times}$ 
induces an isomorphism $\cH_0(\fn, \Z)\xrightarrow{\sim}\Z^{\oplus {q-1}}$ for $\fn=T^3$ or $\fn=T^2(T-1)$. 
(In these cases, the edge $b_0$ lies on a cusp, so $f(b_0)=0$.) Hence the harmonic cochains $f_v\in \cH_0(\fn, \Z)$, defined 
by $f_v(b_u)=\delta_{v,u}$=(Kronecker's delta) form a $\Z$-basis of $\cH_0(\fn, \Z)$, where $v$ runs over $\F_q$ (resp. $\F_q^\times$) 
for $\fn$ square-free (resp. non-square-free). 

Let $\kappa\in \F_q$ and $f\in \cH_0(\fn, \Z)$.  We have 
$$
q(f|T_{T-\kappa})^\ast(1) =q^2f^\ast(T-\kappa)=
\sum_{w\in \pi\cO_\infty/\pi^{3}\cO_\infty}
f\left(\begin{pmatrix} \pi^3 & w\\ 0 & 1\end{pmatrix}\right)
\eta\left(-(T-\kappa)w\right) 
$$
\begin{align*}
= &f\left(\begin{pmatrix} \pi^3 & 0\\ 0 & 1\end{pmatrix}\right) +
\sum_{\beta\in \F_q^\times} f\left(\begin{pmatrix} \pi^3 & \beta \pi^2\\ 0 & 1\end{pmatrix}\right)
\eta\left(-(\pi^{-1}-\kappa)\beta \pi^2\right) \\
& +\sum_{u\in \F_q} \sum_{\beta\in \F_q^\times} 
f\left(\begin{pmatrix} \pi^3 & \beta(\pi+u \pi^2)\\ 0 & 1\end{pmatrix}\right)\eta\left(-(\pi^{-1}-\kappa)\beta(\pi+u \pi^2)\right). 
\end{align*}
Since the double class of $\begin{pmatrix} \pi^3 & w\\ 0 & 1\end{pmatrix}$ does not change if $w$ is replaced by 
$\beta w$ ($\beta\in \F_q^\times$), 
$f\left(\begin{pmatrix} \pi^3 & 0\\ 0 & 1\end{pmatrix}\right)=f(s_1)=0$, and 
$\sum_{\beta\in \F_q^\times}\eta(\beta\pi)=-1$, the above sum reduces to 
\begin{equation}\label{eq3.3}
 -f(a_1)+\sum_{u\in \F_q} f(b_u)(q\delta_{u, \kappa}-1). 
\end{equation}
The condition (ii$'$) from the definition of harmonic cochains implies   
$$
w(a_\infty)f(a_\infty)+w(d_\infty)f(d_\infty)=(q-1)f(a_\infty)+f(d_\infty)=0,
$$
$$
w(a_1)f(a_1)+w(\overline{d_\infty})f(\overline{d_\infty})=(q-1)f(a_1)+f(\overline{d_\infty})=0, 
$$
\begin{equation}\label{eq3.4}
f(a_\infty)=\sum_{u\in \F_q}f(b_u). 
\end{equation}
Therefore, $f(a_1)=-\sum_{u\in \F_q}f(b_u)$. Substituting this into (\ref{eq3.3}), we get 
\begin{equation}\label{eq-key}
(f|T_{T-u})^\ast(1)= f(b_u).
\end{equation}
In particular, $(f_v|T_{T-u})^\ast(1)=\delta_{u, v}$. This implies that the homomorphism 
\begin{equation}\label{eqTHZ}
\T(\fn)\to  \Hom(\cH_0(\fn, \Z), \Z)
\end{equation}
induced by the pairing (\ref{GPairing}) is surjective. Comparing the ranks of both sides, we conclude that this map is  
in fact an isomorphism, which is equivalent to the pairing being perfect. 
\end{proof}

Let $M$ be the $\Z$-submodule of $\T(\fn)$ generated by $\{T_{T-u}\ |\ u\in \F_q\}$ for $\fn$ square-free, and 
by $\{T_{T-u}\ |\ u\in \F_q^\times\}$ for $\fn$ non-square-free. From the previous calculations it is clear that 
the composition of $M\hookrightarrow \T(\fn)$ with (\ref{eqTHZ}) gives a surjection $M\to \Hom(\cH_0(\fn, \Z), \Z)$. 
This implies the following:

\begin{cor}\label{propTgen} Assume $\deg(\fn)=3$. If $\fn$ is square-free, then there is an isomorphism of $\Z$-modules 
$$\T(\fn)\cong \bigoplus_{u\in \F_q}\Z T_{T-u}.$$
If $\fn=T^3$ or $T^2(T-1)$, then 
$$\T(\fn)\cong \bigoplus_{u\in \F_q^\times}\Z T_{T-u}.$$
\end{cor}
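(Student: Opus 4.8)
The plan is to read off everything from the explicit computations already carried out in the proof of Theorem \ref{thmPP}. Recall from there that $\cH_0(\fn, \Z)$ carries the distinguished $\Z$-basis $\{f_v\}$, with $v$ running over $\F_q$ when $\fn$ is square-free and over $\F_q^\times$ when $\fn = T^3$ or $\fn = T^2(T-1)$, characterized by $f_v(b_u) = \delta_{v,u}$; and that the identity (\ref{eq-key}) gives $(f|T_{T-u})^\ast(1) = f(b_u)$ for every $f \in \cH_0(\fn, \Z)$. Under the isomorphism (\ref{eqTHZ}), $\T(\fn) \xrightarrow{\ \sim\ } \Hom(\cH_0(\fn, \Z), \Z)$ supplied by Theorem \ref{thmPP}, the operator $T_{T-u}$ is therefore sent to the linear functional $f \mapsto f(b_u)$, i.e. to the element $f_u^\vee$ of the basis of $\Hom(\cH_0(\fn, \Z), \Z)$ dual to $\{f_v\}$.

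First I would note that this identification realizes the submodule $M$, generated by the operators $T_{T-u}$, as the $\Z$-span of the full dual basis $\{f_v^\vee\}$ inside $\Hom(\cH_0(\fn, \Z), \Z)$; that is exactly the surjectivity of the composition $M \hookrightarrow \T(\fn) \to \Hom(\cH_0(\fn, \Z), \Z)$ recorded just before the statement. Since that composition factors through the isomorphism (\ref{eqTHZ}), the inclusion $M \hookrightarrow \T(\fn)$ is itself surjective, whence $M = \T(\fn)$. Finally, the operators $T_{T-u}$ are carried bijectively onto a $\Z$-basis of $\Hom(\cH_0(\fn, \Z), \Z)$ by an isomorphism, so they are $\Z$-linearly independent in $\T(\fn)$; together with $M = \T(\fn)$ this shows that $\{T_{T-u}\}$ is a $\Z$-basis of $\T(\fn)$, which is precisely the asserted direct sum decomposition (with index set $\F_q$ in the square-free case and $\F_q^\times$ for $\fn = T^3$ or $\fn = T^2(T-1)$).

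I do not expect any real obstacle here: all the substantive input — the construction of the basis $\{f_v\}$, the formula (\ref{eq-key}), and the perfectness of the pairing (\ref{GPairing}) — is already in hand from the proof of Theorem \ref{thmPP}, and what remains is a short diagram-chase. The only points needing a moment's care are keeping the two index sets ($\F_q$ versus $\F_q^\times$) straight, and the elementary observation that a surjection realized as $M \hookrightarrow \T(\fn) \xrightarrow{\sim} \Hom(\cH_0(\fn,\Z),\Z)$ forces $M = \T(\fn)$ since $\Hom(\cH_0(\fn,\Z),\Z)$ is a free $\Z$-module of the same finite rank.
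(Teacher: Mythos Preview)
Your proposal is correct and is essentially the same argument as the paper's: the paper also defines $M$ as the $\Z$-span of the $T_{T-u}$, observes that the composition $M \hookrightarrow \T(\fn) \xrightarrow{\sim} \Hom(\cH_0(\fn,\Z),\Z)$ is surjective (because $(f_v|T_{T-u})^\ast(1)=\delta_{u,v}$), and concludes. Your write-up is slightly more explicit in identifying the image of $T_{T-u}$ with the dual basis vector $f_u^\vee$, but the content is identical.
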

 
Note that in our new notation, the equation (\ref{eqfast1}) is 
\begin{equation}\label{eqf(1)}
f^\ast(1)=-f(a_\infty).
\end{equation}
Denote $S=1+\sum_{u\in \F_q} T_{T-u}$. Using (\ref{eq-key}) we get 
$$
(f|S)^\ast(1) =f^\ast(1)+\sum_{u\in \F_q} f(b_u)=-f(a_\infty)+\sum_{u\in \F_q} f(b_u)=0,  
$$
where the last equality follows from (\ref{eq3.4}). 
The non-degeneracy of the pairing implies that $S=0$, i.e.,  
\begin{equation}\label{eqRelation1}
\sum_{u\in \F_q} T_{T-u}=-1. 
\end{equation}
On the other hand, if $T^2$ divides $\fn$, then $b_0$ lies on a cusp, so $(f|U_T)^\ast(1)=f(b_0)=0$. Thus, $U_T=0$. 
(Note that this also follows from Corollary \ref{cor1.8}.)
This implies that in Corollary \ref{propTgen} 
we can replace one of $T_{T-u}$ by $1$ and still have a $\Z$-basis of $\T(\fn)$. 

\begin{cor}\label{corTT0}
If $\deg(\fn)=3$ and $\fn\neq T(T-1)(T-c)$, then 
$$
\T(\fn)=\T(\fn)^0\cong \Z\oplus \bigoplus_{\substack{u\in \F_q \\ (T-u)\nmid \fn}}\Z T_{T-u}.
$$ 
\end{cor}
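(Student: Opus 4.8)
The plan is to read this off from Corollary~\ref{propTgen}, the relation (\ref{eqRelation1}), and the vanishing $U_T=0$ when $T^2\mid\fn$; beyond these the only work is a short base change. By Corollary~\ref{propTgen}, $\T(\fn)$ is a free $\Z$-module on the degree-one Hecke operators $T_{T-u}$, with $u$ ranging over $\F_q$ when $\fn$ is square-free (the cases $\fn$ irreducible and $\fn=T\fp$) and over $\F_q^\times$ when $\fn=T^3$ or $\fn=T^2(T-1)$; in the latter two cases $T_T=U_T=0$ by Corollary~\ref{cor1.8}, so it is legitimately absent from the list. Now (\ref{eqRelation1}) reads $\sum_u T_{T-u}=-1$ over exactly this index set (the $u=0$ term, when present, vanishes), so $\id$ lies in the $\Z$-span of the listed $T_{T-u}$, and the substitution replacing a single basis vector $T_{T-u_0}$ by $1$ while fixing the others is unimodular; hence $\{1\}\cup\{T_{T-u}\mid u\ne u_0\}$ is again a $\Z$-basis of $\T(\fn)$.

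Next I would choose $u_0$ so that $T-u_0$ is a prime divisor of $\fn$ occurring among the basis operators: $u_0=0$ for $\fn=T\fp$ and $u_0=1$ for $\fn=T^2(T-1)$, so that the surviving indices are precisely $\{u\in\F_q\mid (T-u)\nmid\fn\}$ and one obtains the displayed decomposition with $\{1\}\cup\{T_{T-u}\mid (T-u)\nmid\fn\}$ as a $\Z$-basis. For $\fn$ irreducible there is no linear divisor, and for $\fn=T^3$ the one linear divisor has already been removed ($U_T=0$); in those two cases (\ref{eqRelation1}) instead just exhibits $\id$ inside $\bigoplus_{(T-u)\nmid\fn}\Z T_{T-u}=\T(\fn)$, which is all that is needed below.

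With this in hand, $\T(\fn)=\T(\fn)^0$ is immediate: $1=\id$ is the unit of $\T(\fn)^0$, and $T_{T-u}$ lies in $\T(\fn)^0$ by its very definition whenever $T-u$ is coprime to $\fn$; since these elements generate $\T(\fn)$ over $\Z$ by the previous step, $\T(\fn)\subseteq\T(\fn)^0$, and the reverse inclusion is trivial. The \emph{main obstacle}, such as it is, is the per-case bookkeeping in the middle step: one must confirm, using the quotient graphs of Section~\ref{sHA} and the derivation of (\ref{eqRelation1}), that the relation is supported on exactly the index set furnished by Corollary~\ref{propTgen}, and that the operator traded for $1$ divides $\fn$, so that the surviving indices are exactly those with $(T-u)\nmid\fn$. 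Everything else is formal, which is why the statement is genuinely a corollary.
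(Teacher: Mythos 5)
Your proof is correct and is essentially the paper's own argument: the corollary carries no separate proof precisely because it is meant to follow from Corollary \ref{propTgen}, the relation (\ref{eqRelation1}), and the vanishing $U_T=0$ via exactly the unimodular basis swap you describe. Your caveat about the cases $\fn$ irreducible and $\fn=T^3$ is well taken --- there the displayed direct sum has one summand too many to be a literal $\Z$-basis decomposition of $\T(\fn)$ (since $1$ already lies in the span of the listed $T_{T-u}$, whose number equals the rank) --- but the operative conclusion $\T(\fn)=\T(\fn)^0$, which is what is used later in the paper, holds in all four cases exactly as you argue.
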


\begin{rem}
When $\fn = T(T-1)(T-c)$, we will show that $\T(\fn)/\fE(\fn)$ 
is not cyclic. As $\T(\fn)^0/\fE(\fn)^0$ is cyclic by Lemma~\ref{lemTE0}, this implies that $\T(\fn) \neq \T(\fn)^0$.  
\end{rem}


\section{The Eisenstein ideal of small levels} \label{sec4}

Our main goal in this section is to compute $\T(\fn)/\fE(\fn)$ when $\deg(\fn)=3$.  
We start with a few observations. First of all, by Corollary \ref{cor1.8}, the operator $U_\fp$ is 
either $0$ or $-W_\fp$. Since the Atkin-Lehner involutions commute 
with each other, this implies that $W_\fn U_\fp W_\fn =U_\fp$. Hence the sequence (\ref{eqmopa-inf}) 
is $\T(\fn)$-equivariant with respect to the usual action of $\T(\fn)$ on $\cH_0(\fn, \Z)$. Next, by Theorem \ref{thmPP},  
there is an isomorphism $\T(\fn)$-modules:
$$
\Hom(\cH_0(\fn, \Z), \Z)\cong \T(\fn). 
$$
Hence (\ref{eqmopa-inf}) gives a surjective $\T(\fn)$-equivariant homomorphism $\T(\fn)\to \Phi_\infty$. We will prove 
that this homomorphism factors through $\T(\fn)/\fE(\fn)$, and in fact this gives an isomorphism:  
\begin{thm}\label{thmTE}
If $\deg(\fn)=3$, then $\T(\fn)/\fE(\fn)\xrightarrow{\sim}\Phi_\infty$. 
\end{thm}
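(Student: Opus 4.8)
The plan is to follow the three-step strategy sketched in the introduction, now that Theorem~\ref{thmPP} (hence the isomorphism $\Hom(\cH_0(\fn,\Z),\Z)\cong\T(\fn)$ of $\T(\fn)$-modules) and the explicit descriptions of $\cC(\fn)$, $\Phi_\infty$, and the quotient graphs are available. First I would record that the monodromy sequence (\ref{eqmopa-inf}) is $\T(\fn)$-equivariant --- this uses Corollary~\ref{cor1.8}, which forces $U_\fp\in\{0,-W_\fp\}$ and hence $W_\fn U_\fp W_\fn=U_\fp$, together with the fact that $W_\fn$ commutes with the $T_\fp$ for $\fp\nmid\fn$ --- so that after identifying $\Hom(\cH_0(\fn,\Z),\Z)\cong\T(\fn)$ the sequence becomes
$$
0\To \cH_0(\fn,\Z)\xrightarrow{f\mapsto (f,\cdot)}\T(\fn)\To\Phi_\infty\To 0,
$$
a $\T(\fn)$-equivariant presentation of $\Phi_\infty$ as a cyclic $\T(\fn)$-module generated by the image of $1\in\T(\fn)$.

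The second step is to show that $\Phi_\infty$ is Eisenstein, i.e.\ that $\eta_\fp=T_\fp-|\fp|-1$ annihilates $\Phi_\infty$ for every prime $\fp\nmid\fn$. Under the identification above this is the assertion that $\eta_\fp\cdot\T(\fn)$ lands inside the submodule $\cH_0(\fn,\Z)$, equivalently that $(\eta_\fp f,g)\in(f,g)\Z$-type relations hold; concretely, using the pairing (\ref{GPairing}) and formula (\ref{eq-key}), $(f|T_{T-u})^\ast(1)=f(b_u)$, one reduces the Eisenstein property to a harmonic-cochain identity on the explicit edges $b_u$ of $\G_0(\fn)\bs\sT$ from Section~\ref{sHA}. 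I expect this is exactly the place where one imports, or re-derives, the Eisenstein-ness of $\cC(\fn)$ (Lemma~\ref{lem2.4}) and of the component group: the cuspidal divisors reduce onto the chains $E_i$ of Figures~\ref{Fig1}--\ref{Fig8}, and the specialization computations of Section~\ref{sCDG} already exhibit an Eisenstein element generating $\Phi_\infty$ (or a subgroup of bounded index). This gives a surjective $\T(\fn)$-equivariant map $\T(\fn)/\fE(\fn)\twoheadrightarrow\Phi_\infty$.

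The third and decisive step is the upper bound $\#\bigl(\T(\fn)/\fE(\fn)\bigr)\le\#\Phi_\infty$, which combined with the surjection forces an isomorphism. Here I would use the explicit $\Z$-bases of $\T(\fn)$ from Corollary~\ref{propTgen}/Corollary~\ref{corTT0} --- namely $\T(\fn)=\bigoplus_u\Z T_{T-u}$ (square-free case) or $\bigoplus_{u\ne0}\Z T_{T-u}$ (non-square-free case), together with the relation $\sum_u T_{T-u}=-1$ of (\ref{eqRelation1}) and $U_T=0$ when $T^2\mid\fn$. The Eisenstein ideal $\fE(\fn)$ is generated by $T_\fp-|\fp|-1$ for $\fp\nmid\fn$; using the recursions of Proposition~\ref{propHOm} and, crucially, the Hecke action on Fourier coefficients $(f|T_\fm)^\ast(\fr)=\sum\frac{|\fm|}{|a|}f^\ast(\fr\fm/a^2)$, one expresses enough generators of $\fE(\fn)$ in the chosen basis to compute $\T(\fn)/\fE(\fn)$ as an explicit finite abelian group --- in each of the five cases its order should come out to $q^2$, $q(q^2-1)$, $q^2+q+1$, $(q^2+1)(q+1)$, or $(q+1)^2(q-1)^2(q+1)$ respectively, matching the orders of $\Phi_\infty$ computed in Section~\ref{sCDG}. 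The main obstacle is this last bookkeeping: one needs to identify sufficiently many relations in $\fE(\fn)$ (for small-degree primes $\fp$ of degree $1$ and $2$) to pin down the quotient exactly, rather than merely bound it, and to do so uniformly across the five cases --- in case (5) in particular the quotient is non-cyclic, so one cannot simply invoke Lemma~\ref{lemTE0}(2) and must track the module structure carefully. I would handle the five cases separately, leaning on the already-worked-out case (4) from \cite{PW} as a template.
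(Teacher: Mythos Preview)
Your three-step outline (equivariance of (\ref{eqmopa-inf}) via Corollary~\ref{cor1.8}, Eisenstein-ness of $\Phi_\infty$, then an upper bound on $\#(\T/\fE)$) is exactly the paper's strategy, and your first step is correct as stated.

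Your step~2, however, has a real gap. You propose to deduce that $\Phi_\infty$ is Eisenstein from the fact that $\cC(\fn)$ is Eisenstein together with the specialization $\wp_\infty$, noting parenthetically that $\wp_\infty(\cC(\fn))$ may only generate ``a subgroup of bounded index''. This approach works verbatim in cases $\fn=T^3$ and $\fn=T^2(T-1)$, where $\wp_\infty$ is an isomorphism. But in the irreducible case and in case $\fn=xyz$, the cokernel of $\wp_\infty$ is $\Z/(3,q-1)\Z$ and $\Z/(q-1)\Z$ respectively (Section~\ref{sCDG}), and you give no mechanism for handling the missing piece. The paper's method here is quite different from anything you sketch: apply the snake lemma to the diagram obtained from (\ref{eqmopa-inf}) by multiplication by $N$ (the exponent of $\Phi_\infty$) to get a $\T(\fn)$-equivariant injection $\Phi_\infty\hookrightarrow \cH_{00}(\fn,\Z/N\Z)$, compute the image explicitly as cochains $f_1,f_2,f_3$ on $\G_0(\fn)\bs\sT$, and then identify those cochains with $\Z$-linear combinations of the Eisenstein series $E_\fm$ (for $\fm\mid\fn$) reduced mod $N$, which are visibly Eisenstein by (\ref{eqEisisEis}). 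This is the content of Proposition~\ref{propPhiEis}, and it requires a genuine new idea you did not anticipate.

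Your step~3 also diverges from the paper in execution. You propose to express generators of $\fE(\fn)$ in the basis $\{T_{T-u}\}$ and compute the quotient directly. For the irreducible and $T^3$ cases this is what the paper does, and (\ref{eqRelation1}) is enough. But for $\fn=T^2(T-1)$ and $\fn=xyz$ the paper instead works \emph{dually}: it uses the perfect pairing (Theorem~\ref{thmPP}) to identify $\cE_{00}(\fn,R)\cong\Hom_R(\T_R/\fE_R,R)$, takes $R=\Z/N\Z$ with $N$ the exponent of $\T/\fE$, and bounds $\cE_{00}(\fn,\Z/N\Z)$ by analyzing what an Eisenstein harmonic cochain can look like on the quotient graph (using (\ref{eq-key2}) and the action of the $W_\fm$ on edges). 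This gives an injection of $\T/\fE$ into a group of the right order without ever expressing a higher-degree $T_\fp$ in the basis. Your direct approach is not wrong in principle, but in case~(5) the $U_x,U_y,U_z$ are \emph{not} scalars modulo $\fE(\fn)$, so the bookkeeping you flag as ``the main obstacle'' is substantially harder than the dual computation the paper carries out.
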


A crucial part of the proof consists of showing that $\Phi_\infty$ is annihilated by $\fE(\fn)$. Assume this fact for the moment. Then we get a surjection 
$
\T(\fn)/\fE(\fn)\twoheadrightarrow \Phi_\infty$. 
Now to prove Theorem \ref{thmTE},   
it is enough to show that the order of $\T(\fn)/\fE(\fn)$ is not larger than the order of $\Phi_\infty$. 
We will do this on a case-by-case basis. To simplify the notation, we omit $\fn$ and let $\T:=\T(\fn)$, $\fE:=\fE(\fn)$. 

\subsection{$\fn$ is irreducible} In this case, we know that $\T=\T^0$, so $\T/\fE\cong \Z/N\Z$ for some $N\geq 1$; see Corollary \ref{corTT0} 
and Lemma \ref{lemTE0}. By (\ref{eqRelation1}) 
$$
\sum_{u\in \F_q} T_{T-u}=-1. 
$$
Since $T_{T-u}\equiv (q+1)\ \mod\ \fE$, in $\T/\fE$ we have $q(q+1)+1=0$. 
Therefore $N$ divides $q^2+q+1$. On the other hand, from the calculations in Section \ref{sCDG} 
we know that $\Phi_\infty\cong \Z/(q^2+q+1)\Z$. 

\subsection{$\fn=T^3$} This case is very similar to the previous one. Again $\T/\fE\cong \Z/N\Z$ for some $N\geq 1$. 
The difference is that $U_T=0$ (see Corollary \ref{cor1.8}), so (\ref{eqRelation1})  implies that in $\T/\fE$
$$
(q-1)(q+1)+1=q^2=0.
$$ 
Hence $N$ divides $q^2=\# \Phi_\infty$.

\subsection{$\fn=T^2(T-1)$}\label{ss4.2.3} Again $\T/\fE\cong \Z/N\Z$ for some $N\geq 1$, but the above argument does not quite 
work since we only have 
$$
U_{T-1}+\sum_{\substack{u\in \F_q\\ u\neq 0,1}}T_{T-u}=-1. 
$$ 
(Note that $U_T=0$ by Corollary \ref{cor1.8}.) 
This implies that $U_{T-1}+(q^2-q-1)\in \fE$, but does not 
give a bound on the order of $1$ in $\T/\fE$. Instead we use a different approach. 

Any cuspidal harmonic cochain in $\cH_{00}(\fn, R)$ is uniquely determined by its values on the edges $\{b_u\ |\ u\in \F_q^\times\}$; 
cf. Figure \ref{Fig7}. Note that for any $u\in \F_q$ and $f\in \cH_{00}(\fn, R)$ the equations (\ref{eq-key}) and (\ref{eqf(1)}) 
imply 
\begin{equation}\label{eq-key2}
f(b_u)=-(f|T_{T-u})(a_\infty),
\end{equation}
as $f$ is in the image of $\cH_0(\fn, \Z)$. Now suppose $f\in \cE_{00}(\fn, R)$ and let $f(a_\infty)=\alpha$. 

If $q>2$, then equation (\ref{eq-key2}) gives 
$$
f(b_u)=-(q+1)\alpha, \quad u\in \F_q, u\neq 0,1.
$$
Next, the harmonicity implies
$$
f(b_1)=(q^2-q-1)\alpha, 
$$
so, in fact, $f$ is uniquely determined by $\alpha$. Let $g:=f|W_{T-1}$. We have 
$$
g=f|W_{T-1}=-f|U_{T-1}=f+\sum_{u\neq 0,1}f|T_{T-u}=((q+1)(q-2)+1)f=(q^2-q-1)f. 
$$
The action of $W_{T-1}$ on $\G_0(\fn)\bs\sT$ is easy to describe. Using Lemma \ref{lemWcusp}, we see that $W_{T-1}$ 
interchanges the cusps as follows: 
$$
[\infty] \longleftrightarrow [T^2], \quad [1] \longleftrightarrow [T-1], \quad [T] \longleftrightarrow [T(T-1)]. 
$$ 
Hence $W_{T-1}$ maps $b_u$, for any $u\neq 0,1$, to $\overline{b_{u'}}$ for some other $u'\neq 0,1$. Therefore, 
$$
g(b_u)=(q^2-q-1)f(b_u)=-(q^2-q-1)(q+1)\alpha
$$
and
$$
g(b_u)=f|W_{T-1}(b_u)=f(\overline{b_{u'}})=(q+1)\alpha.
$$
This implies $q(q^2-1)\alpha=0$. 

When $q=2$, let $\fp=T^2+T+1$. We have $f|T_\fp=(q^2+1)f=5f$. 
A direct calculation shows that 
$$
f|T_\fp(d_\infty)=f(d_{T-1}). 
$$
By harmonicity  
$$
w(a_\infty)f(a_\infty)+w(d_\infty)f(d_\infty)=(q-1)f(a_\infty)+f(d_\infty)=0,
$$
so $f(d_\infty)=-\alpha$. Similarly, $f(b_1)=-\alpha$ and $f(d_{T-1})=\alpha$. 
Thus 
$$
-5\alpha=f|T_\fp (d_\infty)=f(d_{T-1})=\alpha,
$$
which implies $6\alpha=q(q^2-1)\alpha=0$. 

The overall conclusion is that there is an injection 
\begin{align}\label{eqInjE}
\cE_{00}(\fn, R) &\hookrightarrow R[q(q^2-1)]\\ 
\nonumber f &\mapsto f(a_\infty).
\end{align}

Denote $\T_R:=\T\otimes_\Z R$ and let $\fE_R$ be the image of $\fE\otimes_\Z R$ in $\T_R$. 
By Theorem \ref{thmPP}, 
$$
\Hom_R (\T_R, R)\cong \cH_{00}(\fn, R). 
$$
Hence 
$$
\cE_{00}(\fn, R)\cong \Hom_R (\T_R, R)[\fE_R]\cong \Hom_R (\T_R/\fE_R, R). 
$$
(To see the second isomorphism note that an $R$-linear map $\psi: \T_R\to R$ is annihilated by $\fE_R$ 
if and only if $(e\circ\psi)(t)=\psi(et)=0$ for all $e\in \fE_R$ and all $t\in \T_R$. But since $\fE_R$ is an ideal in $\T_R$, this 
last condition is equivalent to $\psi$ vanishing on $\fE_R$, or in other words, $\psi$ must factor through $\T_R/\fE_R$.) 

Take $R=\Z/N\Z$. Then $\T_R/\fE_R\cong \T/\fE\cong \Z/N\Z$, so $\cE_{00}(\fn, \Z/N\Z)\cong \Z/N\Z$. 
On the other hand, the injection (\ref{eqInjE}) identifies $\cE_{00}(\fn, \Z/N\Z)$ with a subgroup of $\Z/N\Z[q(q^2-1)]$. 
Hence $N$ must divide $q(q^2-1)=\# \Phi_\infty$. 

\subsection{$\fn=T(T-1)(T-c)$, $c\in \F_q\setminus\{0,1\}$}
The argument here is similar to the previous case. With notation as in $\S$\ref{ssLastCase}, one checks that 
\begin{align*}
\cH_0(\fn, R)&\cong \bigoplus_{u\in \F_q} R \oplus R[q-1]\oplus R[q-1]\oplus R[q-1]\\
f &\mapsto (f(b_u), f(a_x)+f(a_x'), f(a_y)+f(a_y'), f(a_z)+f(a_z'))
\end{align*}
and 
\begin{align*}
\cH_{00}(\fn, R) & =\{f\in \cH_0(\fn, R)\ |\ f(a_x)+f(a_x')=f(a_y)+f(a_y')=f(a_z)+f(a_z')=0\}\\ &\cong \bigoplus_{u\in \F_q} R. 
\end{align*}

Let $f\in \cE_{00}(\fn, R)$ and denote
$$
\alpha=f(a_\infty), \quad \beta=f(a_x), \quad \gamma=f(a_y), \quad \delta=f(a_z).
$$ 
First assume $q>3$.  
Since $f$ is Eisenstein, $f(b_u)=-(q+1)\alpha$ for $u\in \F_q\bs \{0,1,c\}$. We have the relations (see (\ref{eqRelation1}) and Corollary \ref{cor1.8})
$$
U_x+U_y+U_z+\sum_{u\neq 0,1,c} T_{T-u}=-1
$$
and 
$$
W_x+U_x=W_y+U_y=W_z+U_z=0.
$$
Hence 
\begin{equation}\label{eq4.6'}
W_x+W_y+W_z= \sum_{u\neq 0,1,c}T_{T-u}+1,
\end{equation}
and 
\begin{equation}\label{eq4.6}
(f|W_x+W_y+W_z)=(f|\sum_{u\neq 0,1,c}T_{T-u}+1)=((q+1)(q-3)+1)f. 
\end{equation}
The action of the Atkin-Lehner involutions on $\G_0(\fn)\bs \sT$ is easy to deduce by analysing their action on the cusps. 
In particular, one easily checks that $W_x, W_y, W_z$ map any edge $b_u$ with $u\neq 0,1, c$ to $\overline{b_{u'}}$ for some $u'\neq 0,1,c$. 
Fix some $u_0\in \F_q\bs \{0,1,c\}$. On one hand, from what we just said, it follows that 
$$
(f|W_x+W_y+W_z)(b_{u_0})=f(\overline{b_{u'}})+f(\overline{b_{u''}})+f(\overline{b_{u'''}})=3(q+1)\alpha. 
$$
On the other hand, from (\ref{eq4.6})
$$
(f|W_x+W_y+W_z)(b_{u_0})=(q^2-2q-2)f(b_{u_0})=-(q+1)(q^2-2q-2)\alpha. 
$$
This implies $(q+1)(q-1)^2\alpha=0$, i.e., $\alpha\in R[(q+1)(q-1)^2]$. 

Note that $f|W_x$, $f|W_y$, $f|W_z$ are in $\cE_{00}(\fn, R)$, 
since the Atkin-Lehner involutions commute with $T_\fp$ ($\fp\nmid \fn$). Fix some $u\neq 0,1,c$. Then 
$$
(f|W_x|T_{T-u})(a_\infty)=(q+1)(f|W_x)(a_\infty)=(q+1)f(\overline{a_{x'}})=(q+1)f(a_x)=(q+1)\beta. 
$$
Computing the same expression differently, 
$$
(f|W_x|T_{T-u})(a_\infty)=-(f|W_x)(b_u)=-f(\overline{b_{u'}})=f(b_{u'})=-(q+1)\alpha. 
$$
(Here we have used (\ref{eq-key2}).) Hence $(q+1)(\alpha+\beta)=0$, i.e., $\alpha+\beta\in R[q+1]$. 
Similarly, $\alpha+\gamma\in R[q+1]$. Finally, from (\ref{eq4.6}), we get 
$$
\beta+\gamma+\delta=(q^2-2q-2)\alpha,
$$
which means that $\delta$ is determined by $\alpha, \beta, \gamma$. 

Now assume $q=3$. In this case the previous argument needs to be modified as there are no edges $b_u$ with $u\neq 0,1,c$. 
Here (\ref{eq4.6'}) becomes 
$$
W_x+W_y+W_z=1. 
$$
Multiplying this expression by $W_x$, $W_y$ and $W_z$, and then adding the resulting relations, we get 
\begin{equation}\label{eq3Ws}
W_xW_y+W_xW_z+W_yW_z=W_{xy}+W_{xz}+W_{yz}=W_{xyz}=-1. 
\end{equation}
As before, let $f\in \cE_{00}(\fn, R)$. Note that 
$$
\beta=f(a_x)=(f|W_{yz})(\overline{a_\infty})=-(f|W_{yz})(a_\infty),
$$
and similarly 
$$
\gamma=-(f|W_{xz})(a_\infty), \quad \delta=-(f|W_{xy})(a_\infty). 
$$
Let $\fp_1=T^2+1$, $\fp_2=T^2+T-1$, $\fp_3=T^2-T-1$. By an explicit calculation 
\begin{align}\label{eqseveral}
(f|T_{\fp_1})(a_\infty)&=2f(a_x)-2f(a_y)-2f(a_z),\\ 
\nonumber (f|T_{\fp_2})(a_\infty)&=-2f(a_x)+2f(a_y)-2f(a_z),\\ 
\nonumber (f|T_{\fp_3})(a_\infty)&=-2f(a_x)-2f(a_y)+2f(a_z).  
\end{align}
On the other hand, $f|T_{\fp_i}=(q^2+1)f=10 f$. This implies 
\begin{equation}\label{eq3a}
4f(a_x)=4f(a_y)=4f(a_z). 
\end{equation}
By (\ref{eq3Ws}),
\begin{align*}
-4f(a_\infty) &=4f|(W_{xy}+W_{xz}+W_{yz})(a_\infty)\\
 &=4(f|W_x)(a_y)+4(f|W_y)(a_z)+4(f|W_z)(a_x). 
\end{align*}
Since $f|W_x, f|W_y, f|W_z\in \cE_{00}(\fn, R)$, we can apply (\ref{eq3a}) to each of these functions individually 
to conclude that the previous expression is equal to 
$$
4(f|W_x)(a_x)+4(f|W_y)(a_y)+4(f|W_z)(a_z)  
$$
$$
=4f(a_\infty)+ 4f(a_\infty)+4f(a_\infty)=12f(a_\infty). 
$$
Hence $16 f(a_\infty)=0$, i.e., $f(a_\infty)\in R[(q+1)(q-1)^2]$. 
Finally, multiply (\ref{eqseveral}) by $2$ and use $16 f(a_\infty)=0$ to get 
$$
4f(a_\infty)=4f(a_x)-4f(a_y)-4(a_z). 
$$
Using (\ref{eq3a}), we get 
$$
4(f(a_x)+f(a_\infty))=0. 
$$
By a similar argument, $4(f(a_y)+f(a_\infty))=4(f(a_z)+f(a_\infty))=0$. 

The overall conclusion is that for any $q\geq 3$, there is an injection 
\begin{align*}
\cE_{00}(\fn, R) &\hookrightarrow R[(q+1)(q-1)^2]\oplus R[q+1]\oplus R[q+1]\\ 
f &\mapsto (f(a_\infty), f(a_x)+f(a_\infty), f(a_y)+f(a_\infty)). 
\end{align*}

Let $N$ be the exponent of $\T/\fE$ as an abelian group, and $R=\Z/N\Z$. Then $\T_R/\fE_R\cong \T/\fE$, and 
$$
\cE_{00}(\fn, R)\cong \Hom_R(\T_R/\fE_R, R)\cong \T/\fE. 
$$
This implies that $\T/\fE$ is a subgroup of $\Z/N\Z[(q+1)(q-1)^2]\oplus \Z/N\Z[q+1]\oplus \Z/N\Z[q+1]$. 
This latter group is obviously a subgroup of $\Z/(q+1)(q-1)^2\Z\oplus \Z/(q+1)\Z\oplus \Z/(q+1)\Z\cong \Phi_\infty$. 


\subsection{$\Phi_\infty$ is Eisenstein} Here we prove the fact that was needed in the proof of Theorem \ref{thmTE}: 
\begin{prop}\label{propPhiEis} When $\deg(\fn)=3$, 
the component group $\Phi_\infty$ is Eisenstein.
\end{prop}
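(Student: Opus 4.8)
The plan is to deduce the proposition from the Grothendieck monodromy sequence (\ref{eqmopa-inf}) together with the self-duality of the Hecke algebra. First I would note that, by Corollary \ref{cor1.8}, every operator $U_\fp$ is either $0$ or $-W_\fp$, so $W_\fn U_\fp W_\fn=U_\fp$ and (\ref{eqmopa-inf}) is $\T(\fn)$-equivariant, not merely $\T(\fn)^0$-equivariant. Feeding in the isomorphism $\Hom(\cH_0(\fn,\Z),\Z)\cong\T(\fn)$ of Theorem \ref{thmPP}, the sequence becomes a $\T(\fn)$-equivariant presentation $0\to\cH_0(\fn,\Z)\xrightarrow{\iota}\T(\fn)\to\Phi_\infty\to 0$, where $\iota(f)$ is the element $t_f\in\T(\fn)$ characterized by $(g|t_f)^\ast(1)=(f,g)$ for all $g$. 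Writing $I:=\iota(\cH_0(\fn,\Z))$, the assertion that $\Phi_\infty$ is Eisenstein is literally the assertion that $\eta_\fp:=T_\fp-|\fp|-1\in I$ for every prime $\fp\nmid\fn$; since $\fE(\fn)$ is generated by the $\eta_\fp$, this is equivalent to $\fE(\fn)\subseteq I$.

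Next I would bring in the cuspidal divisor group. By Lemma \ref{lem2.4} the group $\cC(\fn)$ is Eisenstein, and the canonical specialization $\wp_\infty\colon\cC(\fn)\to\Phi_\infty$ is $\T(\fn)$-equivariant, since the Hecke correspondences extend to the N\'eron model and commute with $\wp_\infty$ (cf. $\S$\ref{ssDMC}, $\S$\ref{ssGMP}); hence $\wp_\infty(\cC(\fn))$ is an Eisenstein submodule of $\Phi_\infty$. Inspecting the five computations of Section \ref{sCDG}: in cases (1) and (2) the map $\wp_\infty$ is an isomorphism, so $\Phi_\infty\cong\cC(\fn)$ as $\T(\fn)$-modules and the proposition is immediate; in cases (3), (4), (5) the cokernel of $\wp_\infty$ is cyclic of order dividing $q-1$, so $(q-1)\Phi_\infty\subseteq\wp_\infty(\cC(\fn))$ is Eisenstein, and therefore $(q-1)\cdot(\eta_\fp\Phi_\infty)=0$, i.e. $\eta_\fp\Phi_\infty\subseteq\Phi_\infty[q-1]$, for every $\fp\nmid\fn$. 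It then remains, in cases (3)--(5), to show that $\eta_\fp$ kills this bounded torsion part as well.

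For that last step I would argue case by case, using the explicit quotient graphs of Section \ref{sHA} and the relations already established in $\T(\fn)$. By (\ref{eq-key}) the functionals dual to the basis $\{T_{T-u}\}$ of $\T(\fn)$ are the evaluations $g\mapsto g(b_u)$, so it suffices to check that the functional $g\mapsto(\eta_\fp g)(b_u)$ lies in the image of the monodromy pairing map $\iota$. Using $\sum_{u\in\F_q}T_{T-u}=-1$ of (\ref{eqRelation1}) (together with $U_\fp=0$ when $\fp^2\mid\fn$, and Atkin--Lehner relations of the type (\ref{eq4.6'})) one rewrites $\eta_\fp$ for $\deg\fp=1$ as an explicit $\Z$-combination of the $T_{T-u}$, and then the harmonicity identities (\ref{eq3.4}), (\ref{eq3.3}) let one check directly that the corresponding functional is represented by an integral cochain; for $\fp$ of higher degree one reduces to the degree-one case via the recursions of Proposition \ref{propHOm} and Theorem \ref{thmMAIN4}(iii). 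In the cyclic cases (3) and (4) this boils down to a single congruence modulo $\#\Phi_\infty$; case (5) requires the full computation with the Atkin--Lehner involutions $W_x,W_y,W_z$, in the same spirit as the upper-bound arguments in $\S$\ref{ss4.2.3} and the following subsection.

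The main obstacle is precisely this last step in case (5): the part of $\Phi_\infty$ not reached by the cuspidal divisors has exponent as large as $q-1$, and it cannot be controlled by any soft argument, because a quotient of an Eisenstein module need not be Eisenstein and here $\Phi_\infty$ is a genuine (possibly non-split) extension of a cyclic group of order $q-1$ by the Eisenstein module $\wp_\infty(\cC(\fn))$. Pinning down the Hecke action on the extra factor seems to force one to use the fine structure of the reduction at $\infty$ --- the components $Z,Z'$ and the four chains of projective lines of Figure \ref{Fig4} --- together with the explicit action of enough Atkin--Lehner and Hecke operators on the relevant edges of $\G_0(xyz)\bs\sT$.
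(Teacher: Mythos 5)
Your reduction of the statement to the inclusion $\fE(\fn)\subseteq I$, where $I=\iota(\cH_0(\fn,\Z))$ is the image of the monodromy map in (\ref{eqmopa-inf}), is correct, and cases (1) and (2) are handled exactly as in the paper (via the $\T(\fn)$-equivariant isomorphism $\wp_\infty:\cC(\fn)\to\Phi_\infty$ and Lemma \ref{lem2.4}). For the remaining cases, however, the proposal does not close. The observation $\eta_\fp\Phi_\infty\subseteq\Phi_\infty[q-1]$ is of no use, as you yourself note, and the computational strategy you sketch is both uncarried-out and structurally incomplete: at best it would verify $\eta_{T-u}\in I$ for the finitely many degree-one primes, and your proposed passage to higher-degree primes does not work. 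The recursions of Proposition \ref{propHOm} never express $T_\fr$ for a prime $\fr$ of degree $\geq 2$ in terms of degree-one operators; and while Theorem \ref{thmMAIN4}(iii) gives a $\Z$-linear identity $T_\fr=\sum_u c_u T_{T-u}$, reducing it modulo $I$ via $T_{T-u}\equiv q+1$ leaves the integer $(q+1)\sum_u c_u-(|\fr|+1)$, whose membership in $I$ is precisely what has to be proved and is not implied by the degree-one cases. (In case (3) one could still rescue the argument by a counting sandwich: $\#\cC(\fn)=\#\Phi_\infty=q^2+q+1$ and the surjection $\T(\fn)/\fE(\fn)\to\cC(\fn)$, $t\mapsto tc_1$, would force $\fE(\fn)=I$ once the degree-one elements are known to lie in $I$. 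But in case (5) the orders of $\cC(\fn)$ and $\Phi_\infty$ differ by a factor of $q-1$, so no such sandwich exists --- which is exactly where you concede defeat.)

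The idea you are missing is the paper's use of Eisenstein series. Multiplying (\ref{eqmopa-inf}) by $N$ (the exponent of $\Phi_\infty$) and applying the snake lemma embeds $\Phi_\infty$ into $\cH_{00}(\fn,\Z/N\Z)$, with explicitly computable generators of the image; the paper then shows these generators are congruent modulo $N$ to integral combinations of the Eisenstein series $E_\fm$ for $\fm\mid\fn$, which satisfy $E_\fm|T_\fp=(|\fp|+1)E_\fm$ for \emph{every} prime $\fp\nmid\fm$ at once by the multiplicativity of the divisor function, see (\ref{eqEisisEis}). This single congruence disposes of all primes $\fp$ simultaneously --- avoiding the degree-one reduction entirely --- and turns the whole problem into a finite verification, namely tabulating the values of the relevant Eisenstein series on the edges of $\G_0(\fn)\bs\sT$; that table is what actually tames case (5).
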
 

When $\fn=T^3$ or $T^2(T-1)$, the proposition easily follows from our earlier results. Indeed, we have shown that in these cases 
the canonical specialization map $\wp_\infty: \cC(\fn)\to \Phi_\infty$ is an isomorphism. Since $\wp_\infty$ 
is $\T(\fn)$-equivariant and $\cC(\fn)$ is annihilated by $\fE(\fn)$ (see Lemma \ref{lem2.4}), the claim follows. 
When $\fn$ irreducible or $\fn=T(T-1)(T-c)$, we will need a different argument. 

\subsubsection{$\fn$ is irreducible} 
There is an isomorphism (cf. Figure \ref{Fig5})
\begin{align*}
\cH_{0}(\fn, R) &\xrightarrow{\sim} \bigoplus_{u\in \F_q} R\\ 
f &\mapsto (f(b_u)). 
\end{align*}
Let $h_u\in \cH_0(\fn, \Z)$ be the harmonic cochain 
defined by $h_u(b_{u'})=\delta_{u, u'}$. Then the set $\{h_u\}$ forms a $\Z$-basis of $\cH_0(\fn, \Z)$. 
Enumerate the elements of $\F_q$ from $0$ to $q-1$. Let $f_0=h_0$ and $f_i=h_i-h_{i-1}$, $1\leq i\leq q-1$. 
This is again a basis of $\cH_0(\fn, \Z)$. Let $\{\psi_i\}$ be the dual basis of $\Hom(\cH_0(\fn, \Z), \Z)$, i.e., 
$\psi_i(f_j)=\delta_{ij}$. 

It is easy to see that $f_0(a_\infty)=1$, $f_0(d_\infty)=-(q-1)$, $f_0(a_1)=-1$, and $f_0(b_u)=\delta_{0,u}$. 
Similarly, $f_i(b_j)=\delta_{i,j}-\delta_{i-1, j}$ for $1\leq i\leq q-1$. Recall that $n(d_\infty)=q-1$, and $n(e)=1$ 
for all other edges of $(\G_0(\fn)\bs\sT)^0$; for the definition of $n(e)$ see (\ref{eqn(e)}). Now one easily computes that the pairing (\ref{eqPIP}) gives 
$$
(f_0, f_0)=q+2, \quad (f_0, f_1)=-1,\quad  (f_0, f_i)=0 \quad \text{for }i\geq 2; 
$$
and for $1\leq i\leq q-1$
$$
(f_i,f_j) = \begin{cases} 2, & \text{if $i =j$,}\\ 
-1, & \text{if $j=i-1$ or $j=i+1$,} \\ 
0, & \text{otherwise.}
\end{cases}
$$
Hence the map $\iota:f\mapsto (f, \cdot)$ in (\ref{eqmopa-inf}) sends 
\begin{align*}
f_0 &\mapsto (q+2)\psi_0-\psi_1\\
f_i &\mapsto -\psi_{i-1}+2\psi_i-\psi_{i+1}\quad \text{for }1\leq i\leq q-2\\
f_{q-1} &\mapsto -\psi_{q-2} +2\psi_{q-1}. 
\end{align*}
Applying the exact sequence (\ref{eqmopa-inf}), we conclude that $\Phi_\infty$ 
is generated by $\psi_0$ modulo the relation $(q^2+q+1)\psi_0=0$, i.e., $\Phi_\infty\cong \Z/(q^2+q+1)\Z$. 

\begin{rem}
Of course, we already knew that $\Phi_\infty\cong \Z/(q^2+q+1)\Z$ from the computations in Section \ref{sCDG}. 
The advantage of using (\ref{eqmopa-inf}) is that it relates $\Phi_\infty$ 
to the cuspidal harmonic cochains, and so allows to compute the action of Hecke operators on $\Phi_\infty$.  
On the other hand, the method used in Section \ref{sCDG} is better suited for computing the specialization map
$\cC(\fn)\to \Phi_\infty$. 
\end{rem}

Let $N=q^2+q+1$. Consider the commutative diagram obtained from (\ref{eqmopa-inf}) by multiplication by 
$N$: 
\begin{equation}\label{eq-comdia}
\xymatrix{
0\ar[r] & \cH_0(\fn, \Z) \ar[r] \ar[d]^-{N} &  \Hom(\cH_0(\fn, \Z), \Z) \ar[r]\ar[d]^-{N} & \Phi_\infty\ar[r]\ar[d]^-{N} & 0 \\
0\ar[r] & \cH_0(\fn, \Z) \ar[r] &  \Hom(\cH_0(\fn, \Z), \Z) \ar[r] & \Phi_\infty\ar[r] & 0
}
\end{equation}
The snake lemma gives an injection $\Phi_\infty\hookrightarrow \cH_{00}(\fn, \Z/N\Z)$. Our previous calculations allow us
to explicitly describe the image of $\Phi_\infty$. Indeed, in $\Hom(\cH_0(\fn, \Z), \Z)$ we have 
$$
N\psi_0=\sum_{i=0}^{q-1}(q-i)\cdot \iota(f_i).  
$$
Therefore, the image of $\Phi_\infty$ in $\cH_{00}(\fn, \Z/N\Z)$ is generated by the cuspidal harmonic cochain 
$$
f:=qf_0+(q-1)f_1+\cdots+f_{q-1}\quad \mod\ N. 
$$
Since the above commutative diagram is compatible with the action of $\T(\fn)$, to conclude that $\Phi_\infty$ 
is Eisenstein, now it is enough to show that $f\in \cE_{00}(\fn, \Z/N\Z)$. 

\begin{defn}
Assume for the moment that $\fn\lhd A$ is an arbitrary non-zero ideal. Define 
$$
\nu(\fn) =
\begin{cases}
1, & \text{if  $\deg(\fn)$ is even};\\ 
q+1, & \text{if  $\deg(\fn)$ is odd.} 
\end{cases}
$$ 
The \textit{Eisenstein series} of level $\fn$ is the function on $E(\sT)$ defined in terms of Fourier expansion 
$$
E_{\fn}\left(\begin{pmatrix} \pi^k & y \\0 & 1\end{pmatrix}\right) = \nu(\fn)\cdot q^{-k+1} \cdot \left[
\frac{1-|\fn|}{1-q^2} + \sum_{0 \neq m \in A, \atop \deg(m) \leq k-2} \sigma_{\fn}(m) \eta(my)\right],
$$
where $\sigma_{\fn}(m):= \sigma(m) - |\fn| \cdot \sigma(m/\fn)$, and $\sigma$ is the divisor function
$$\sigma(m):= \begin{cases}\sum\limits_{\text{monic } m' \in A, \atop m' \mid m} |m'|, &\text{ if $m \in A$;} \\
0, & \text{ otherwise.}\end{cases}
$$
(Here $\eta$ is the additive character in (\ref{eq-eta}).) This function is in fact a (non-cuspidal) harmonic cochain in $\cH(\fn, \Z)$; 
see \cite{Discriminant}. Note that for each prime $\fp \lhd A$ and $m \in A$, 
$$\sigma(m\fp) = \sigma(\fp) \sigma(m) - |\fp|\sigma(m/\fp).$$
Therefore the Fourier expansion of $E_{\fn}|T_{\fp}$ for each prime $\fp$ not dividing $\fn$ tells us that 
\begin{equation}\label{eqEisisEis}
E_{\fn}|T_{\fp} = (|\fp|+1)E_{\fn}.
\end{equation}
\end{defn}

Returning to the case when $\fn$ is irreducible of degree $3$, we relate $f$ to the reduction of $E_\fn$ modulo $N$. 
From the definition of $f$, it is easy to see that $f(b_u)=1$ for all $u\in \F_q$, and as we have discussed, this uniquely 
characterizes $f$ in $\cH_{00}(\fn, \Z/N\Z)$. Next, we compute 
$$
E_{\fn}(s_\infty)=E_{\fn}\left(\begin{pmatrix} \pi & 0 \\0 & 1\end{pmatrix}\right)=(q+1)\frac{q^3-1}{q^2-1}=q^2+q+1. 
$$
$$
E_{\fn}(b_u)=E_{\fn}\left(\begin{pmatrix} \pi^3 & \pi+u\pi^2 \\0 & 1\end{pmatrix}\right)=\frac{(q+1)}{q^2}
\left[
\frac{q^3-1}{q^2-1} + \sum_{0 \neq m \in A, \atop \deg(m) \leq 1} \sigma_{\fn}(m) \eta(m(\pi+u\pi^2))\right]
$$
$$
=\frac{(q+1)}{q^2}
\left[
\frac{q^3-1}{q^2-1} + \sum_{m\in \F_q^\times} \eta(m(\pi+u\pi^2))+q\sum_{\deg(m)=1} \eta(m(\pi+u\pi^2))\right]
$$
$$
=\frac{(q+1)}{q^2}
\left[
\frac{q^3-1}{q^2-1} -1+0\cdot q\right]=1. 
$$
Therefore,
$$
E_\fn\equiv f\ \mod\ N, 
$$
and (\ref{eqEisisEis}) implies $f|T_\fp=(|\fp|+1)f$ as was required to show. 

\subsubsection{$\fn=T(T-1)(T-c)$, $c\in \F_q\setminus \{0,1\}$} The strategy of the proof of Proposition \ref{propPhiEis} in this case 
is similar to the case when $\fn$ is irreducible, but the calculations become much more complicated. 
As before, any harmonic cochain in $\cH_0(\fn, R)$ is uniquely determined by its values on the edges $\{b_u\ |\ u\in \F_q\}$; 
see Figure \ref{Fig8}.  
Let $h_u\in \cH_0(\fn, \Z)$ be defined by $h_u(b_{u'})=\delta_{u, u'}$. Then $\{h_u\ |\ u\in \F_q\}$ is a 
$\Z$-basis of $\cH_0(\fn, \Z)$. Let 
\begin{align*}
f_1 &=-h_0-h_1+(q^2-2q)h_c-\sum_{u\neq 0,1,c}h_u,\\ 
f_2 &=h_0-h_c,\\ 
f_3 &= h_1-h_c. 
\end{align*}
The pairing (\ref{eqPIP}) gives
$$
(h_0, f_1)=0, \quad (h_1, f_1)=0, \quad (h_c, f_1)=(q+1)(q-1)^2, \quad (h_u, f_1)=0 \text{ for }u\neq 0,1,c;
$$
$$
(h_0, f_2)=q+1, \quad (h_1, f_2)=0, \quad (h_c, f_2)=-(q+1), \quad (h_u, f_2)=0 \text{ for }u\neq 0,1,c;
$$
$$
(h_0, f_3)=0, \quad (h_1, f_3)=q+1, \quad (h_c, f_3)=-(q+1), \quad (h_u, f_3)=0 \text{ for }u\neq 0,1,c.
$$
This implies that $\psi_1:=\frac{\iota(f_1)}{(q+1)(q-1)^2}$, $\psi_2:=\frac{\iota(f_2)}{(q+1)}$, 
$\psi_3:=\frac{\iota(f_3)}{(q+1)}$ are in $\Hom(\cH_0(\fn, \Z), \Z)$. One computes that 
$$
\Phi_\infty\cong \frac{\Hom(\cH_0(\fn, \Z), \Z)}{\iota(\cH_0(\fn, \Z))}\cong \frac{\Z}{(q+1)(q-1)^2\Z}\psi_1
\oplus \frac{\Z}{(q+1)\Z}\psi_2\oplus \frac{\Z}{(q+1)\Z}\psi_3. 
$$
Using the diagram (\ref{eq-comdia}) with $N=(q+1)(q-1)^2$, we get an injection from $\Phi_\infty$ into $\cH_{00}(\fn, \Z/N\Z)$, 
and the image is generated by $f_1$, $f_2$, and $f_3$. Now it is enough to show that the reductions of these harmonic cochains are Eisenstein. 

Let $x:=T, y:=T-1, z:=T-c$. The Eisenstein series $E_x, E_y, E_z, E_{xy}, E_{xz}, E_{yz}, E_{xyz}$ are in $\cH(\fn, \Z)$. 
Further, denote
$$
E_{xy}'=E_x-E_y, \quad E_{yz}'=E_y-E_z, \quad E_{xyz}'=E_{xy}+(q+1)^{-1}(E_{xyz}-E_z). 
$$
One can check that $E_{xyz}'$ is $\Z$-valued by computing its values on the edges of $\G_0(\fn)\bs \sT$. (Note that the  
matrix representatives of the edges are given after Figure \ref{Fig8}.) Moreover, by computing the values of other Eisenstein series 
on the edges of $\G_0(\fn)\bs \sT$ (see Table 1), it is possible to show that modulo $N$ we have
$$
f_1 \equiv -(q+1)E_x+(q^2+1)E_{xy}+(q^2+q+1)E_{xy}'+qE_{yz}+(q^2+1)E_{yz}'+qE_{xz}-qE_{xyz}',
$$
and modulo $q+1$ we have 
\begin{align*}
f_2 &\equiv -E_{xy}-E_{xy}'+E_{yz}-E_{yz}'\\
f_3 &\equiv -E_{xy}-E_{yz}'+E_{xz}. 
\end{align*}
This proves that $\Phi_\infty$ is Eisenstein. 

\begin{table}[h]\label{tableEis}
\caption{Values of Eisenstein series on $\G_0(xyz)\bs \sT$}
\scalebox{0.7}{
\begin{tabular}{ |c || c | c | c | c |c |c |c |c |c |c |c |c |c |c |c |c |c |c |c |c |c |}
\hline
 &  $s_\infty$ & $s_1$& $s_x$& $s_y$& $s_z$& $s_{yz}$& $s_{xz}$& $s_{xy}$& $d_\infty$& $d_x$& $d_y$& $d_z$& $a_\infty$& $a_\infty'$& $a_x$& $a_x'$& $a_y$& $a_y'$& $a_z$& $a_z'$& $b_u$\\
\hline
$E_x$ &  $1$ & $q^2$& $-q^2$& $q$& $q$& $1$& $-q$& $-q$& $q$& $q$& $-1$& $-1$& $-1$& $-q$& $-q$& $-1$& $1$& $-1$& $1$& $-1$& $1$\\
\hline
$E_{xy}$ & $1$& $q$& $0$& $0$& $1$& $0$& $0$& $-q$& $1$& $0$& $0$& $-1$& $0$& $-1$& $0$& $0$& $0$& $0$& $0$& $-1$& $0$ \\
\hline
$E_{xy}'$ & $0$& $0$& $-q$& $q$& $0$& $1$& $-1$& $0$& $0$& $1$& $-1$& $0$& $0$ & $0$& $-1$& $0$& $1$& $0$& $0$& $0$& $0$\\
\hline
$E_{yz}$ & $1$& $q$& $1$& $0$& $0$& $-q$& $0$& $0$& $1$& $-1$& $0$& $0$& $0$& $-1$& $0$& $-1$& $0$& $0$& $0$& $0$& $0$ \\
\hline
$E_{yz}'$ & $0$& $0$& $0$& $-q$& $q$& $0$& $1$& $-1$& $0$& $0$& $1$& $-1$& $0$& $0$& $0$& $0$& $-1$& $0$& $1$& $0$& $0$\\
\hline
$E_{xz}$ & $1$& $q$& $0$& $1$& $0$& $0$& $-q$& $0$& $1$& $0$& $-1$& $0$& $0$& $-1$& $0$& $0$& $0$& $-1$& $0$& $0$& $0$ \\
\hline
$E_{xyz}'$ & $q + 1$& $q + 1$& $0$& $0$& $q + 1$& $0$& $0$& $-q - 1$& $2$ & $0$& $0$& $-2$& $1$& $-1$& $0$& $0$& $0$& $0$& $1$& $-1$& $0$\\ 
\hline
\end{tabular}
}
\end{table}


\subsection{Alternative definitions of the Eisenstein ideal}\label{ssAltDef} The original definition of the Eisenstein ideal in \cite{Mazur} 
includes also a congruence on the $U_p$ operator. 
Another definition of Eisenstein ideal 
for prime level $\fp$ is given in \cite[p. 230]{Tamagawa} as the kernel of $\T(\fp)\to \End_\Z(\cC(\fp))$. 

For non-zero $\fn\lhd A$, the cuspidal divisor group $\cC(\fn)$ is preserved by the action of $\T(\fn)$, so one can consider   
$$
\fI(\fn)=\ker(\T(\fn)\to \End_\Z(\cC(\fn))). 
$$
Since, in general, $\cC(\fn)$ is not Eisenstein, $\fI(\fn)$ will be different from the Eisenstein ideal. On the other hand, when 
$\deg(\fn)=3$, $\cC(\fn)$ is Eisenstein (see Lemma \ref{lem2.4}), so $\fE(\fn)\subseteq \fI(\fn)$. 

\begin{lem}\label{lem417} Assume $\deg(\fn)=3$. 
\begin{itemize}
\item[(i)] If $\fn$ is irreducible, or $\fn=T^3$, or $\fn=T^2(T-1)$, then $\fE(\fn)=\fI(\fn)$. 
\item[(ii)] If $\fn=xy$, where $x$ and $y$ are irreducible of degree $1$ and $2$, respectively, then 
$\fE(\fn)= \fI(\fn)$ if and only if $q$ is even. 
\end{itemize}
\end{lem}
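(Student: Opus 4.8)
The plan is to translate the equality $\fE(\fn) = \fI(\fn)$ into a faithfulness statement for the cuspidal divisor group, exploiting the fact that in all four cases at hand $\T(\fn) = \T(\fn)^0$. By Lemma \ref{lem2.4}, $\cC(\fn)$ is Eisenstein, so the generators $T_\fp - (|\fp|+1)$ of $\fE(\fn)$ annihilate $\cC(\fn)$; hence $\fE(\fn) \subseteq \fI(\fn)$, the group $\cC(\fn)$ is a module over $\T(\fn)/\fE(\fn)$, and $\fI(\fn)/\fE(\fn) = \operatorname{ann}_{\T(\fn)/\fE(\fn)}(\cC(\fn))$; in particular $\fE(\fn) = \fI(\fn)$ if and only if $\cC(\fn)$ is a \emph{faithful} $\T(\fn)/\fE(\fn)$-module. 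Now by Corollary \ref{corTT0} we have $\T(\fn) = \T(\fn)^0$ in each of these cases, so $\fE(\fn) = \fE(\fn)^0$ and, by Lemma \ref{lemTE0}, $\T(\fn)/\fE(\fn) = \T(\fn)^0/\fE(\fn)^0$ is cyclic as a ring; that is, $\T(\fn)/\fE(\fn) \cong \Z/N\Z$ as a ring with $N = \#\big(\T(\fn)/\fE(\fn)\big) = \#\Phi_\infty$, the last equality being Theorem \ref{thmTE}. A unital ring homomorphism $\Z/N\Z \to \End_\Z(\cC(\fn))$ sends $1$ to $\id$, so the action of $\T(\fn)/\fE(\fn)$ on $\cC(\fn)$ is the scalar action, whose kernel is $\exp(\cC(\fn)) \cdot \Z/N\Z$; since $\exp(\cC(\fn)) \mid N$, we conclude that $\fE(\fn) = \fI(\fn)$ if and only if $\exp(\cC(\fn)) = \#\Phi_\infty$.

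It then remains to read off $\exp(\cC(\fn))$ and $\#\Phi_\infty$ from the computations of Section \ref{sCDG}. For $\fn$ irreducible, $\cC(\fn) \cong \Z/(q^2+q+1)\Z \cong \Phi_\infty$; for $\fn = T^3$, $\cC(\fn) \cong \Z/q^2\Z \cong \Phi_\infty$; for $\fn = T^2(T-1)$, $\cC(\fn) \cong \Z/q(q^2-1)\Z \cong \Phi_\infty$. In all three cases $\cC(\fn)$ is cyclic of order exactly $\#\Phi_\infty$, hence faithful, and therefore $\fE(\fn) = \fI(\fn)$; this proves (i). For $\fn = xy$ with $x$, $y$ irreducible of degrees $1$ and $2$ (case (4) of Section \ref{sCDG}) we have $\cC(\fn) \cong \Z/(q+1)\Z \oplus \Z/(q^2+1)\Z$ and $\#\Phi_\infty = (q+1)(q^2+1)$, so
$$
\exp(\cC(\fn)) = \operatorname{lcm}(q+1,\, q^2+1) = \frac{(q+1)(q^2+1)}{\gcd(q+1,\, q^2+1)}.
$$
Since $q^2 + 1 \equiv 2 \pmod{q+1}$ one has $\gcd(q+1, q^2+1) = \gcd(q+1, 2)$, which equals $1$ precisely when $q$ is even; hence $\exp(\cC(\fn)) = \#\Phi_\infty$ if and only if $q$ is even, which is (ii).

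There is no deep obstacle here: the content is the observation that $\T(\fn) = \T(\fn)^0$ collapses the $\T(\fn)/\fE(\fn)$-module structure on $\cC(\fn)$ to the scalar action, reducing the whole statement to comparing the single integer $\exp(\cC(\fn))$ with $\#\Phi_\infty$. The two points that need a little care are the cyclicity of $\T(\fn)/\fE(\fn)$ as a \emph{ring} — this is exactly what fails for $\fn = T(T-1)(T-c)$, which is why that case is omitted — and the elementary computation $\gcd(q+1, q^2+1) = \gcd(q+1, 2)$ that produces the even/odd dichotomy in (ii).
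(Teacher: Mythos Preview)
Your proof is correct and follows essentially the same approach as the paper: both arguments use that $\T(\fn)/\fE(\fn)$ is cyclic of order $N$ (the paper writes $N=\#\cC(\fn)$, you write $N=\#\Phi_\infty$, which coincide here), reduce the equality $\fE(\fn)=\fI(\fn)$ to the condition $\exp(\cC(\fn))=N$, and then check this against the explicit computations of $\cC(\fn)$. Your write-up is slightly more explicit about why the $\T(\fn)/\fE(\fn)$-action on $\cC(\fn)$ is scalar and about the gcd computation in case (ii), but the strategy is the same.
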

\begin{proof} For all these $\fn$ we know that $\T(\fn)/\fE(\fn)\cong \Z/N(\fn)\Z$, where $N(\fn)$ 
is the order of $\cC(\fn)$. 
The inclusion $\fE(\fn)\subseteq \fI(\fn)$ induces a surjection $\T(\fn)/\fE(\fn)\to \T(\fn)/\fI(\fn)$, 
so this latter group is also cyclic: $\T(\fn)/\fI(\fn)\cong \Z/M(\fn)\Z$, where $M(\fn)|N(\fn)$. 
From the definition of $\fI(\fn)$ we see that $M(\fn)$ is the exponent of the group $\cC(\fn)$. 
If $\fn$ is irreducible, or $\fn=T^3$, or $\fn=T^2(T-1)$, $\cC(\fn)$ is a cyclic, so $M(\fn)=N(\fn)$. For $\fn=xy$, 
 $\cC(\fn)$ is cyclic if and only if $q$ is even. 
\end{proof}

If $\fn$ is one of the ideals in the previous lemma, then $\T(\fn)/\fE(\fn)$ is cyclic. This implies that $U_\fp$ 
is congruent to an integer modulo $\fE(\fn)$. If $\fp^2|\fn$, then we know that $U_\fp=0$. If 
$\fp\parallel \fn$, then $U_\fp=-W_\fp$, so one can determine the integer in question 
by studying the action of $W_\fp$ on $\Phi_\infty$ (since $\Phi_\infty\cong \T(\fn)/\fE(\fn)$). This is easy to do by considering 
the action of $W_\fp$ on the cusps. The results are the following:
\begin{enumerate}
\item If $\fn$ is irreducible, then $U_\fn-1\in \fE(\fn)$. 
\item If $\fn=T^2(T-1)$, then $U_{T-1}+(q^2-q-1)\in \fE(\fn)$. 
\item If $\fn=xy$, where $x$ and $y$ are irreducible of degree $1$ and $2$, respectively, then 
$U_x+q^2$ and $U_y-q^2$ are in $\fE(\fn)$. 
\item If $\fn=xyz$, where $x$, $y$, and $z$ are distinct irreducibles of degree $1$, then by a similar calculation 
one can show that none of $U_x, U_y, U_z$ is a scalar modulo $\fE(\fn)$, although 
\begin{align*}
U_xU_yU_z=-W_\fn &\equiv 1\ (\mod\ \fE(\fn)),\\ 
U_x+U_y+U_z &\equiv -(q^2-2q-2)\ (\mod\ \fE(\fn)).
\end{align*}
\end{enumerate}

\begin{lem}
Let $\fp\lhd A$ be an arbitrary prime. Then $\fE(\fp)=\fI(\fp)$ and $U_\fp-1\in \fE(\fp)$. 
\end{lem}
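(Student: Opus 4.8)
The plan is to combine P\'al's computation of $\T(\fp)/\fE(\fp)$ with an explicit analysis of the action of $\T(\fp)$ on the cuspidal divisor group. First I would record the structure of $X_0(\fp)$ for $\fp$ prime: by Lemma \ref{lemCD1} it has exactly two cusps, indexed by the monic divisors $1$ and $\fp$ of $\fp$, and $[\infty]:=[\fp]$. Hence $\cC(\fp)$ is cyclic, generated by $c_1=[1]-[\infty]$, say $\cC(\fp)\cong \Z/N\Z$ with $N=\#\cC(\fp)$. Exactly as in the proof of Lemma \ref{lem2.4}, the matrices $\begin{pmatrix} \fq & 0 \\ 0 & 1\end{pmatrix}$ and $\begin{pmatrix} 1 & b \\ 0 & \fq\end{pmatrix}$ occurring in the definition of $T_\fq$ ($\fq\nmid \fp$) fix both cusps up to $\G_0(\fp)$-equivalence (this is immediate here since $\mathrm{gcd}(b,\fn/\fp)$ is trivial), so $T_\fq c=(|\fq|+1)c$ for every $c\in \cC(\fp)$. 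Thus $\cC(\fp)$ is Eisenstein, and therefore $\fE(\fp)\subseteq \fI(\fp)$.

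Next I would compute $\T(\fp)/\fI(\fp)$. Since $\cC(\fp)\cong \Z/N\Z$, we have $\End_\Z(\cC(\fp))=\Z/N\Z$, and $\T(\fp)/\fI(\fp)$ is the image of $\T(\fp)$ in this ring. The $\Z$-algebra $\T(\fp)$ is generated by the $T_\fq$ with $\fq\nmid \fp$ together with $U_\fp$; the former act as the integers $|\fq|+1$. For $U_\fp$ I would invoke Corollary \ref{cor1.8}(2), which applies since $\deg(\fp/\fp)=0\leq 2$ and $\fp^2\nmid \fp$, to get $U_\fp=-W_\fp$; and Lemma \ref{lemWcusp} shows that $W_\fp$ interchanges $[1]$ and $[\infty]$, so $W_\fp c_1=-c_1$ and hence $U_\fp$ acts as $+1$ on $\cC(\fp)$. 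Consequently the image of $\T(\fp)$ in $\Z/N\Z$ contains $1$, so $\T(\fp)/\fI(\fp)\cong \Z/N\Z$.

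To finish part (a) I would invoke P\'al's theorem \cite{Pal} (the prime-level analogue of Mazur's computation, which is also reflected in Theorem \ref{thmEispnot} together with the identification of $\Phi_\infty[\fE(\fp)]$ with $\cC(\fp)$): $\T(\fp)/\fE(\fp)$ is finite of order $N$. Combined with the surjection $\T(\fp)/\fE(\fp)\twoheadrightarrow \T(\fp)/\fI(\fp)$ induced by $\fE(\fp)\subseteq \fI(\fp)$, this is a surjection between finite groups of the same order, hence an isomorphism, so $\fE(\fp)=\fI(\fp)$. Part (b) is then immediate: $U_\fp$ and $1$ both act as the identity on $\cC(\fp)$, so $U_\fp-1\in \ker(\T(\fp)\to \End_\Z(\cC(\fp)))=\fI(\fp)=\fE(\fp)$.

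The only genuinely external input — and the main obstacle if one wants a self-contained argument — is the upper bound $\#(\T(\fp)/\fE(\fp))\leq N$; everything else is elementary bookkeeping with the two cusps and the Fricke involution $W_\fp$. This bound is exactly the prime-level case of the order computation carried out by P\'al; alternatively, it can be obtained by adapting the Eisenstein-series argument of Section \ref{sec4} to prime level, i.e. by exhibiting the reduction of $E_\fp$ modulo $N$ as a generator of the image of $\Phi_\infty$ in $\cH_{00}(\fp,\Z/N\Z)$ and bounding $\cE_{00}(\fp,R)$.
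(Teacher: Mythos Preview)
Your proof is correct and follows essentially the same route as the paper: establish that $\cC(\fp)$ is cyclic of order $N(\fp)$ and Eisenstein, compute that $U_\fp=-W_\fp$ acts as $+1$ on it so that $\T(\fp)/\fI(\fp)\cong\Z/N(\fp)\Z$, and then invoke Theorem~\ref{thmEispnot} (equivalently P\'al's computation) to match orders and conclude $\fE(\fp)=\fI(\fp)$. The paper compresses the middle step by pointing to the argument of Lemma~\ref{lem417}, but your explicit verification that the image of $\T(\fp)$ in $\End_\Z(\cC(\fp))=\Z/N(\fp)\Z$ is the whole ring is exactly what that reference unpacks to.
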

\begin{proof} 
In the notation of Section \ref{sEPN}, 
the cuspidal divisor group $\cC(\fp)$ is generated by $c_0=[0]-[\infty]$, which has order $N(\fp)$ given in (\ref{eqNfp}). 
By Theorem \ref{thmEispnot}, $\T(\fp)/\fE(\fp)\cong \Z/N(\fp)\Z$. Since $c_0$ is Eisenstein, 
we can repeat the argument in Lemma \ref{lem417} to get the equality $\fE(\fp)=\fI(\fp)$. Since $W_\fp=-U_\fp$ 
acts as $-1$ on $c_0$, we have $U_\fp-1\in \fI(\fp)$. 
\end{proof}

\begin{rem}
The previous lemma implies that we could have defined $\fE(\fp)$ to be the ideal generated by $T_\fq-(|\fq|+1)$ for all $\fq\neq \fp$, and $U_\fp-1$. 
This is the exact analogue of Mazur's definition of the Eisenstein ideal in \cite{Mazur}. 
\end{rem}


\section{Integral version of a theorem of Atkin and Lehner}\label{sIAL} Let $\fn=T(T-1)(T-c)$, where $c\in \F_q\bs\{0,1\}$. 
We proved that $\T(\fn)/\fE(\fn)$ is not a cyclic group, for example, it contains three distinct subgroups of order $q+1$. 
On the other hand, by Lemma \ref{lemTE0}, $\T(\fn)^0/\fE(\fn)^0$ is cyclic. Hence the index $[\T(\fn):\T(\fn)^0]$ 
is finite but strictly larger than $1$. 
In this subsection we deduce some information about this index as a consequence of a general result which 
can be considered as a certain integral version of Theorem 1 in \cite{AL}.  

Following \cite{Pal}, we say that $R$ is a \textit{coefficient ring} if $p\in R^\times$
and $R$ is a quotient of a discrete valuation ring which contains $p$-th roots of unity. 
As is observed in \cite{Pal}, the theory of Fourier expansions discussed in $\S$\ref{ssFE} works 
over any coefficient ring. 

Let $\fm\in A$ be a non-zero ideal and denote
$$
B_\fm=\begin{pmatrix} \fm & 0 \\  0 & 1\end{pmatrix}. 
$$
It is easy to check that for any $f\in \cH(\fn, R)$,  we have $f|B_\fm\in \cH(\fn\fm, R)$.

\begin{thm}\label{thmIntAL}
Let $\fn\lhd A$ be a non-zero ideal. Suppose there are three distinct prime ideals $\fp_1, \fp_2, \fp_3$ of $A$ which divide 
$\fn$ but are coprime to $\fn/(\fp_1\fp_2\fp_3)$. 
Let $f\in \cH(\fn, R)$, where $R$ is a coefficient ring. 
Suppose $f^\ast(\fm)=0$ unless $\fp_i|\fm$ for some $i=1,2,3$. 
Then there exist $f_i\in \cH(\fn/\fp_i, R)$, $1\leq i\leq 3$, such that 
$$
s_{\fp_1, \fp_2}s_{\fp_1, \fp_3}s_{\fp_2, \fp_3}\cdot f=f_1|B_{\fp_1}+f_2|B_{\fp_2}+f_3|B_{\fp_3}, 
$$
where $s_{\fp_i, \fp_j}=\mathrm{gcd}(|\fp_i|+1, |\fp_j|+1)$. 
\end{thm}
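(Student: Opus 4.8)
This is the function‑field, integral‑coefficient incarnation of the Atkin--Lehner oldform theorem, so the plan is to mimic that proof while tracking integrality. For a prime $\fp\parallel\fn$ write $\fM=\fn/\fp$; there are two degeneracy operators $\cH(\fM,R)\to\cH(\fn,R)$, the inclusion $\iota_\fp$ (coming from $\G_0(\fn)\subseteq\G_0(\fM)$) and $B_\fp$. The first thing I would prove is the precise effect of $B_\fp$ on Fourier coefficients: for $g\in\cH(\fM,R)$,
$$
(g|B_\fp)^\ast(\fr)=\begin{cases} g^\ast(\fr/\fp),& \fp\mid\fr,\\ 0,& \fp\nmid\fr.\end{cases}
$$
The computation substitutes $B_\fp=\begin{pmatrix}\fp&0\\0&1\end{pmatrix}$ into the Fourier‑coefficient formula, uses that the additive character $\eta$ only sees the coefficient of $\pi^1$ (so it is trivial on polynomials), and uses $\G_0(\fM)$‑invariance under $\begin{pmatrix}1&\lambda\\0&1\end{pmatrix}$, $\lambda\in A$, to absorb the polynomial part of $\fp u$; the $|\fp|$ unipotent translates collapse and produce the clean shift. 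Combined with $(f|U_{\fp})^\ast(\fr)=|\fp|\,f^\ast(\fr\fp)$ (Proposition/formula for Hecke operators, $\fp\mid\fn$), this identifies, on Fourier coefficients, the ``part supported on multiples of $\fp$'' of any cochain.

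\textbf{The single‑prime depth reduction.} The key lemma is: if $h\in\cH(\fn,R)$ with $\fp\parallel\fn$ satisfies $h^\ast(\fm)=0$ for every $\fm$ coprime to $\fp$, then $h\in B_\fp\,\cH(\fM,R)$. I would set $h':=|\fp|^{-1}(h|U_\fp)$; since $|\fp|=q^{\deg\fp}\in R^\times$ (here we use that $R$ is a coefficient ring), $h'\in\cH(\fn,R)$ and $(h')^\ast(\fr)=h^\ast(\fr\fp)$. The support hypothesis on $h$ is exactly what is needed to check that $h'$ is in fact $\G_0(\fM)$‑invariant --- this is the function‑field form of Atkin--Lehner's lemma, and it is where the hypothesis on the support of $f^\ast$ genuinely enters. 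Then by the shift formula above, $h'|B_\fp$ and $h$ have the same Fourier expansion, hence are equal. Conjugating by the Atkin--Lehner involution $W_\fp$ gives the dual statement (cochains with Fourier support \emph{coprime} to $\fp$ that are $\fp$‑old descend along $\iota_\fp$), which I will use to make the depletion operators below well defined.

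\textbf{Combining the three primes.} For each $i$ introduce the depletion operator $\mathcal D_i:=\mathrm{id}-|\fp_i|^{-1}U_{\fp_i}B_{\fp_i}$; by the dual single‑prime lemma it descends to an endomorphism of $\cH(\fn,R)$ projecting onto the part with Fourier support coprime to $\fp_i$, and the $\mathcal D_i$ commute. The hypothesis on $f$ says precisely that $\mathcal D_1\mathcal D_2\mathcal D_3 f=0$, so inclusion--exclusion gives
$$
f=\sum_{\varnothing\neq I\subseteq\{1,2,3\}}(-1)^{|I|+1}\,\mathcal P_I f,\qquad \mathcal P_I=\prod_{i\in I}\bigl(|\fp_i|^{-1}U_{\fp_i}B_{\fp_i}\bigr),
$$
where $\mathcal P_I f\in\cH(\fn,R)$ has Fourier support on multiples of $\prod_{i\in I}\fp_i$. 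For each such term, pick $i\in I$ and apply the single‑prime lemma to peel off a factor $B_{\fp_i}$; reassembling the seven terms into three groups indexed by the chosen primes would already give $f=f_1|B_{\fp_1}+f_2|B_{\fp_2}+f_3|B_{\fp_3}$ --- except that when $|I|\ge 2$ the cochain produced by the single‑prime lemma naturally lives at level $\fn\cdot\prod_{j\in I,\, j\neq i}\fp_j$ rather than at $\fn/\fp_i$, and pushing it down to the exact level $\fn/\fp_i$ requires inverting a degeneracy map whose composite with its trace is multiplication by $|\fp_i|+1$ (the degree of $X_0(\fn)\to X_0(\fn/\fp_i)$, which need not be a unit in $R$). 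Tracking these denominators --- using Lemma \ref{lemUW} to keep the relation between $U_{\fp_i}$ and $W_{\fp_i}$ integral --- and clearing them is what forces multiplication by $s_{\fp_1,\fp_2}s_{\fp_1,\fp_3}s_{\fp_2,\fp_3}$. Concretely, the statement to establish is that the cokernel of
$$
\sum_{i}B_{\fp_i}\,\cH(\fn/\fp_i,R)\ \hookrightarrow\ \{h\in\cH(\fn,R):\mathrm{supp}(h^\ast)\subseteq\textstyle\bigcup_i\{\fp_i\mid\fm\}\}
$$
is annihilated by the product of the pairwise $\gcd(|\fp_i|+1,|\fp_j|+1)$.

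\textbf{Main obstacle.} Over $\mathrm{Frac}(R)$ the decomposition is the classical Atkin--Lehner argument and requires none of the $s_{\fp_i,\fp_j}$; the real work is proving that the denominator picked up in the descent step divides exactly $\prod_{i<j}s_{\fp_i,\fp_j}$ rather than a coarser quantity such as $\prod_i(|\fp_i|+1)$. I expect this to come down to a careful computation of how $U_{\fp_i}$, $W_{\fp_i}$, $B_{\fp_i}$ and the trace operators interact on $\cH(\fn,R)$ (so that each step stays inside the lattice), followed by an elementary arithmetic argument: a relation forcing a common denominator to divide each of two integers forces it to divide their gcd. A secondary technical point, which should not be skipped, is verifying $\G_0(\fn/\fp_i)$‑invariance of the output of the single‑prime lemma --- again exactly the place where ``$f^\ast(\fm)=0$ unless some $\fp_i\mid\fm$'' is used.
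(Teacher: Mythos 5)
Your plan assembles the right toolkit --- the Fourier-coefficient formula for $B_\fp$, the single-prime descent via $|\fp|^{-1}U_\fp$, the trace relation $\phi|B_\fp|(W_\fp+U_\fp)=(|\fp|+1)\phi$, and the idea that running the argument in several symmetric ways and taking a $\Z$-linear combination turns products into gcd's. These are precisely the ingredients the actual proof uses (Lemmas 2.17, 2.21, 2.22, 2.23 and Proposition 2.12 of \cite{PW}, plus the two-prime case, Theorem 2.24 there). But the proposal stops at the decisive point: you write that "the real work is proving that the denominator picked up in the descent step divides exactly $\prod_{i<j}s_{\fp_i,\fp_j}$" and that you \emph{expect} this to follow from a careful computation. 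That computation is the theorem. Without it, your inclusion--exclusion scheme only shows that \emph{some} integer multiple of $f$ lies in $\sum_i B_{\fp_i}\cH(\fn/\fp_i,R)$, and, taken literally, it threatens to produce the coarser multiplier $\prod_i(|\fp_i|+1)$, one factor for each degeneracy map that has to be undone. (A secondary issue: since $f\in\cH(\fn,R)$ need not be cuspidal, "same Fourier coefficients, hence equal" requires controlling the constant terms as well; this is also where the cited lemmas do real work.)

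The missing device is the following. Set $\phi_3=|\fp_3|^{-1}f|U_{\fp_3}$; then $f-\phi_3|B_{\fp_3}$ has Fourier support on multiples of $\fp_1$ or $\fp_2$, so the two-prime theorem decomposes $s_{\fp_1,\fp_2}(f-\phi_3|B_{\fp_3})$ at the cost of a single factor $s_{\fp_1,\fp_2}$, but with components at the wrong level $\fn\fp_3/\fp_i$. Applying $W_{\fp_3}+U_{\fp_3}$ and using $\phi_3|B_{\fp_3}|(W_{\fp_3}+U_{\fp_3})=(|\fp_3|+1)\phi_3$ pushes everything back down, and one more application of the two-prime theorem yields
$$
s_{\fp_1,\fp_2}^2(|\fp_3|+1)\,f\ \in\ B_{\fp_1}\cH(\fn/\fp_1,R)+B_{\fp_2}\cH(\fn/\fp_2,R)+B_{\fp_3}\cH(\fn/\fp_3,R).
$$
Permuting the primes gives the same containment with multipliers $s_{\fp_1,\fp_3}^2(|\fp_2|+1)$ and $s_{\fp_2,\fp_3}^2(|\fp_1|+1)$, and the elementary identity $\gcd\bigl(s_{1,2}^2m_3,\ s_{1,3}^2m_2,\ s_{2,3}^2m_1\bigr)=s_{1,2}s_{1,3}s_{2,3}$ (Lemma \ref{lemgcds}) then produces exactly the claimed multiplier. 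It is this triple of explicit multipliers, together with the gcd identity relating them, that your "elementary arithmetic argument" needs as input; producing them is the content of the proof, not a routine verification.
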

\begin{proof} 
Let $$\phi_3:=|\fp_3|^{-1}f|U_{\fp_3}\in \cH(\fn, R).$$ 
Using Lemma 2.17 and Lemma 2.22 in \cite{PW}, we have 
$$
(\phi_3|B_{\fp_3})^\ast(\fm)=
\begin{cases}
0 & \text{if }\fp_3\nmid \fm;\\
f^\ast(\fm)& \text{if }\fp_3\mid \fm. 
\end{cases}
$$ 
Therefore $\left(f-(\phi_3|B_{\fp_3})\right)^\ast(\fm)=0$ unless $\fp_1$ or $\fp_2$ divides $\fm$. 
By Lemma 2.23 and Theorem 2.24 in \cite{PW}, 
there exist $\phi_1\in \cH(\fn\fp_3/\fp_1, R)$ and $\phi_2\in \cH(\fn\fp_3/\fp_2, R)$ such that
$$
s_{\fp_1, \fp_2}\left(f-(\phi_3|B_{\fp_3})\right)=\phi_1|B_{\fp_1}+\phi_2|B_{\fp_2}. 
$$
Next, by Proposition 2.12 and Lemma 2.21 in \cite{PW}, $\phi_3|B_{\fp_3}|(W_{\fp_3}+U_{\fp_3})=(|\fp_3|+1)\phi_3$. Hence,  
$$
s_{\fp_1, \fp_2}\phi_3|B_{\fp_3}|(W_{\fp_3}+U_{\fp_3})=(|\fp_3|+1)s_{\fp_1, \fp_2}\phi_3
$$
$$
=s_{\fp_1, \fp_2}f'-\phi_1'|B_{\fp_1}-\phi_2'|B_{\fp_2},
$$
where 
$$
f'=f|(U_{\fp_3}+W_{\fp_3})\in \cH(\fn/\fp_3, R)\qquad (\text{cf. Lemma \ref{lemUW}}),
$$
$$
\phi_1'=\phi_1|(U_{\fp_3}+W_{\fp_3})\in \cH(\fn/\fp_1, R), \quad 
\phi_2'=\phi_2|(U_{\fp_3}+W_{\fp_3})\in \cH(\fn/\fp_2, R). 
$$
Let $\phi_3':=s_{\fp_1, \fp_2}f'$. Then 
$$
f'':=s_{\fp_1, \fp_2}(|\fp_3|+1)f-s_{\fp_1, \fp_2}\phi_3'|B_{\fp_3}
$$
$$
=((|\fp_3|+1)\phi_1-\phi_1'|B_{\fp_3})|B_{\fp_1}+((|\fp_3|+1)\phi_2-\phi_2'|B_{\fp_3})|B_{\fp_2}. 
$$
This implies that $f''\in \cH(\fn, R)$ has $(f'')^\ast(\fm)=0$ unless $\fp_1$ or $\fp_2$ divides $\fm$. 
Therefore, applying Theorem 2.24 in \cite{PW} one more time, we deduce that there exist $\phi_1''\in \cH(\fn/\fp_1, R)$ 
and $\phi_2''\in \cH(\fn/\fp_2, R)$ such that $s_{\fp_1, \fp_2}f''=\phi_1''|B_{\fp_1}+\phi_2''|B_{\fp_2}$. If we 
denote $\phi_3''=s_{\fp_1, \fp_2}^2\phi_3'$, then we proved 
$$
s_{\fp_1, \fp_2}^2(|\fp_3|+1)f=\phi_1''|B_{\fp_1}+\phi_2''|B_{\fp_2}+ \phi_3''|B_{\fp_3}. 
$$

We can interchange the roles of the primes $\fp_1, \fp_2, \fp_3$ and repeat the same argument to conclude that 
there exist $f_i\in \cH(\fn/\fp_i, R)$, $1\leq i\leq 3$, such that 
$$
sf=f_1|B_{\fp_1}+f_2|B_{\fp_2}+f_3|B_{\fp_3}, 
$$
where 
$$
s =\mathrm{gcd}\left(s_{\fp_1, \fp_2}^2(|\fp_3|+1), s_{\fp_1, \fp_3}^2(|\fp_2|+1),s_{\fp_2, \fp_3}^2(|\fp_1|+1)\right). 
$$
Finally, by Lemma \ref{lemgcds}, $s=s_{\fp_1, \fp_2}s_{\fp_1, \fp_3}s_{\fp_2, \fp_3}$. 
\end{proof}

\begin{lem}\label{lemgcds}
Let $m_1, m_2,m_3$ be positive integers. Put $s_{i,j}=\gcd(m_i, m_j)$ for $1\leq i<j\leq 3$. Then 
$$
\gcd(s_{1,2}^2m_3, s_{1,3}^2m_2, s_{2,3}^2m_1)=s_{1,2}s_{1,3}s_{2,3}. 
$$
\end{lem}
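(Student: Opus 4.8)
The plan is to verify the identity one prime at a time: both sides are positive integers, and an equality of positive integers can be checked by comparing $\ell$-adic valuations $v_\ell$ for every prime number $\ell$. So first I would fix a prime $\ell$ and write $a_i := v_\ell(m_i)$ for the exponent of $\ell$ in $m_i$, $i = 1,2,3$. Because the asserted identity is symmetric under permutations of $\{1,2,3\}$ (simultaneously permuting the $m_i$ and the $s_{i,j}$), I may assume without loss of generality that $a_1 \le a_2 \le a_3$.

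Under this ordering, the exponents of $\ell$ in $s_{1,2}$, $s_{1,3}$, $s_{2,3}$ are $\min(a_1, a_2) = a_1$, $\min(a_1, a_3) = a_1$, and $\min(a_2, a_3) = a_2$ respectively, so the exponent of $\ell$ on the right-hand side $s_{1,2} s_{1,3} s_{2,3}$ is $2a_1 + a_2$. On the left-hand side, the exponent of $\ell$ is
$$
\min\bigl(2a_1 + a_3,\ 2a_1 + a_2,\ a_1 + 2a_2\bigr),
$$
and I would observe that $2a_1 + a_3 \ge 2a_1 + a_2$ because $a_3 \ge a_2$, while $2a_1 + a_2 \le a_1 + 2a_2$ because $a_1 \le a_2$; hence this minimum equals $2a_1 + a_2$, matching the right-hand side. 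As $\ell$ was arbitrary, the two integers have the same prime factorization and are therefore equal.

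This argument is entirely elementary, so I do not anticipate any genuine obstacle; the only point requiring a little care is the reduction to the ordered case $a_1 \le a_2 \le a_3$, which is legitimate precisely because the claimed formula is invariant under simultaneous permutation of the indices $m_i \mapsto m_{\sigma(i)}$, $s_{i,j} \mapsto s_{\sigma(i),\sigma(j)}$.
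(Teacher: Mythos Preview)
Your proof is correct and is essentially identical to the paper's own argument: both fix a prime, write the valuations of the $m_i$, reduce by symmetry to the ordered case $a_1 \le a_2 \le a_3$, and compute that both sides have $\ell$-adic valuation $2a_1 + a_2$. Your write-up is in fact slightly more explicit about why the minimum equals $2a_1 + a_2$ and about the legitimacy of the symmetry reduction, but the method is the same.
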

\begin{proof}
Let $p$ be an arbitrary prime number. Let $p^i, p^j, p^k$ be the largest powers of $p$ dividing $m_1, m_2, m_3$, respectively. 
We can assume without loss of generality that $i\leq j\leq k$. Then the largest power of $p$ dividing $s_{1,2}s_{1,3}s_{2,3}$ is $p^n$, 
where $n=i+i+j=2i+j$. On the other hand, the largest power of $p$ dividing $\gcd(s_{1,2}^2m_3, s_{1,3}^2m_2, s_{2,3}^2m_1)$ is $p^{n'}$, 
where 
$$
n'=\min(2i+k, 2i+j, 2j+i)=2i+j=n. 
$$
\end{proof}

\begin{cor}\label{corIndex}
Assume $\fn=xyz$, where $x,y,z\lhd A$ are distinct primes of degree $1$. If a prime number $\ell$ divides the index $[\T(\fn):\T(\fn)^0]$, 
then $\ell$ divides $q(q+1)$. Conversely, any prime dividing $q+1$ also divides $[\T(\fn):\T(\fn)^0]$. 
\end{cor}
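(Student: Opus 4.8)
Corollary \ref{corIndex} asserts two things about the index $[\T(\fn):\T(\fn)^0]$ when $\fn = xyz$ is a product of three distinct primes of degree $1$: its prime divisors lie among those of $q(q+1)$, and every prime dividing $q+1$ actually divides it.

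Here is my plan.

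\textbf{The converse direction} is essentially already done. By the explicit calculations of Section \ref{sec4} (the case $\fn = T(T-1)(T-c)$), the quotient $\T(\fn)/\fE(\fn) \cong \Phi_\infty$ contains a subgroup isomorphic to $\Z/(q+1)\Z \oplus \Z/(q+1)\Z$, so it is not cyclic whenever $q+1 > 1$; in fact its $\ell$-primary part is non-cyclic for any prime $\ell \mid q+1$. On the other hand, $\T(\fn)^0/\fE(\fn)^0$ is cyclic by Lemma \ref{lemTE0}. The natural surjection $\T(\fn)^0/\fE(\fn)^0 \twoheadrightarrow \T(\fn)^0/(\fE(\fn)\cap\T(\fn)^0) $ and the inclusion $\T(\fn)^0 \hookrightarrow \T(\fn)$ together show that if $\T(\fn) = \T(\fn)^0$ then $\T(\fn)/\fE(\fn)$ would be a quotient of a cyclic group, hence cyclic — contradiction. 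More precisely, tensoring with $\Z_\ell$ for $\ell \mid q+1$: if $\ell \nmid [\T(\fn):\T(\fn)^0]$ then $\T(\fn)_\ell = \T(\fn)^0_\ell$, forcing $(\T(\fn)/\fE(\fn))_\ell$ to be a quotient of the cyclic group $(\T(\fn)^0/\fE(\fn)^0)_\ell$, contradicting non-cyclicity. So $\ell \mid [\T(\fn):\T(\fn)^0]$.

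\textbf{The forward direction} is where Theorem \ref{thmIntAL} enters. Let $\ell$ be a prime with $\ell \nmid q(q+1)$; I want $\ell \nmid [\T(\fn):\T(\fn)^0]$. Equivalently, I must show every element of $\T(\fn)$ lies in $\T(\fn)^0$ after localizing at $\ell$. Dualizing via the perfect pairing of Theorem \ref{thmPP}, $\T(\fn) \cong \Hom(\cH_0(\fn,\Z),\Z)$, and $\T(\fn)^0$ is the image of $\cH_0(\fn,\Z)$ under a similar pairing restricted to operators coprime to $\fn$; the cokernel is controlled by the "oldform-type" subspace. Concretely, $\T(\fn)/\T(\fn)^0$ measures the failure of the $U_x, U_y, U_z$ operators (equivalently the Atkin--Lehner involutions $W_x, W_y, W_z$, since $U_\fp = -W_\fp$ here by Corollary \ref{cor1.8}) to be expressible via the coprime Hecke operators. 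I would apply Theorem \ref{thmIntAL} with $\fp_1 = x$, $\fp_2 = y$, $\fp_3 = z$ and $\fn/(\fp_1\fp_2\fp_3) = A$ (so the coprimality hypotheses are automatic) to a suitable $f \in \cH(\fn, R)$ — specifically to an Eisenstein-type cochain or to the components of a cochain whose Fourier coefficients $f^\ast(\fm)$ vanish unless some $\fp_i \mid \fm$. The factor $s_{\fp_i,\fp_j} = \gcd(|\fp_i|+1, |\fp_j|+1) = \gcd(q+1,q+1) = q+1$, so the theorem gives $(q+1)^3 f = f_1|B_x + f_2|B_y + f_3|B_z$ with $f_i \in \cH(\fn/\fp_i, R)$; since $\deg(\fn/\fp_i) = 2$ forces $\cH_0(\fn/\fp_i,\Z) = 0$, the $f_i$ are Eisenstein, and one extracts that $(q+1)^3$ times the relevant obstruction class is "old". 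Combined with the coefficient-ring constraint $p \in R^\times$ (so $\ell \neq p$, i.e. $\ell \nmid q$), this shows the $\ell$-part of $\T(\fn)/\T(\fn)^0$ is killed once $\ell \nmid q(q+1)$.

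\textbf{The main obstacle} will be the bookkeeping in the middle step: translating the statement of Theorem \ref{thmIntAL} (which is about harmonic cochains whose Fourier coefficients are supported on multiples of the $\fp_i$) into a statement about the index $[\T(\fn):\T(\fn)^0]$ via the duality of Theorem \ref{thmPP}. One has to identify precisely which cochains witness the cokernel $\T(\fn)/\T(\fn)^0$ — these come from the "new at $\fn$ but built from $U_\fp$-images" part — verify they satisfy the support hypothesis of Theorem \ref{thmIntAL}, and check that the resulting decomposition $sf = \sum f_i|B_{\fp_i}$ with $s = (q+1)$ (after Lemma \ref{lemgcds} collapses the three gcds) indeed certifies that $s$ annihilates the obstruction. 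The vanishing $\cH_0(\fn/\fp_i,\Z)=0$ for $\deg(\fn/\fp_i)=2$ is what makes the argument close up, since it means the $f_i$ contribute nothing cuspidal and the whole discrepancy is Eisenstein. I expect the prime $p = \mathrm{char}(F)$ to require separate (easy) handling: it is excluded because coefficient rings invert $p$, which is exactly why $q$ appears in $q(q+1)$ rather than just $q+1$.
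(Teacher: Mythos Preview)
Your approach matches the paper's essentially verbatim, in both directions. Two small corrections and one clarification on your ``main obstacle'':

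For the forward direction, the paper's execution of the duality step is clean and worth stating precisely. Assume $\ell \nmid q(q+1)$, so in particular $\ell \neq p$ and $\F_\ell$ is a coefficient ring. The perfect pairing of Theorem \ref{thmPP} gives an $\F_\ell$-linear map $\cH_{00}(\fn,\F_\ell) \to \Hom(\T(\fn)^0\otimes\F_\ell,\F_\ell)$; if $\ell$ divided the index, this map would have nonzero kernel. A nonzero $f$ in this kernel satisfies $(f|T_\fm)^\ast(1)=|\fm|f^\ast(\fm)=0$ for all $\fm$ coprime to $\fn$, hence (since $\ell\nmid q$) $f^\ast(\fm)=0$ unless $x$, $y$, or $z$ divides $\fm$. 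That is exactly the hypothesis of Theorem \ref{thmIntAL}, and there is nothing vague about ``which cochains witness the cokernel''.

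Two slips: the constant is $s=s_{x,y}s_{x,z}s_{y,z}=(q+1)^3$, not $q+1$; this is harmless since $\ell\nmid q+1$. More importantly, the $f_i$ are \emph{not} ``Eisenstein'': the construction in the proofs of Theorem \ref{thmIntAL} and \cite[Thm.~2.24]{PW} shows they may be taken in $\cH_{00}(\fn/\fp_i,\F_\ell)$, and since $\deg(\fn/\fp_i)=2$ these spaces vanish, giving $f=0$ directly. Your version (``the $f_i$ are Eisenstein, so the discrepancy is Eisenstein'') would require an extra argument separating cuspidal from Eisenstein contributions.

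The converse direction is exactly the paper's: if $\ell\nmid[\T(\fn):\T(\fn)^0]$ then $\T(\fn)\otimes\F_\ell=\T(\fn)^0\otimes\F_\ell$, so $(\T(\fn)/\fE(\fn))\otimes\F_\ell$ is cyclic, contradicting $(\T(\fn)/\fE(\fn))\otimes\F_\ell\cong\F_\ell^3$ for $\ell\mid q+1$.
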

\begin{proof}
Assume $\ell\nmid q(q+1)$. Consider the $\F_\ell$-linear map 
$$
\cH_{00}(\fn, \F_\ell)\to \Hom(\T(\fn)^0\otimes \F_\ell, \F_\ell)
$$
obtained from the pairing (\ref{GPairing}). If $\ell$ divides the index $[\T(\fn):\T(\fn)^0]$, then 
the kernel of the above map is non-zero. This means that there is $0\neq f\in \cH_{00}(\fn, \F_\ell)$ 
such that $f^\ast(\fm)=0$ unless $x$, $y$, or $z$ divides $\fm$. Applying Theorem \ref{thmIntAL}, we can write 
$$
f=f_1|B_x+f_2|B_y+f_3|B_z
$$
for some $f_1\in \cH(xy, \F_\ell)$, $f_2\in \cH(xz, \F_\ell)$, $f_3\in \cH(yz, \F_\ell)$. 
Moreover, it is not hard to see from the construction of $f_1, f_2, f_3$ in the proofs 
of Theorem \ref{thmIntAL} and \cite[Thm. 2.24]{PW} that we can choose these harmonic cochains 
to be from the $\cH_{00}$ part of their corresponding spaces. But $\cH_0(\fn', \Z)=0$ for any $\deg(\fn')=2$, which implies 
that $f$ must be $0$, a contradiction. 

To prove the second statement, let $\ell$ be a prime not dividing $[\T(\fn):\T(\fn)^0]$. Then $\T(\fn)\otimes \F_\ell=\T(\fn)^0\otimes \F_\ell$. 
This implies that $(\T(\fn)/\fE(\fn))\otimes \F_\ell$ is cyclic. On the other hand, if $\ell\mid (q+1)$, then 
$(\T(\fn)/\fE(\fn))\otimes \F_\ell\cong \F_\ell\times \F_\ell\times \F_\ell$ by Theorem \ref{thmTE}. 
\end{proof}

\begin{rem}
The harmonic cochains $f_1, f_2, f_3$ in Theorem \ref{thmIntAL} are not necessarily unique. For example, 
for distinct $x,y,z$ of degree $1$
$$
(E_z-E_{yz})|B_x - (E_z-E_{xz})|B_y - (E_x-E_y)|B_z =0, 
$$
where $E_\fm\in \cH(\fm, \C)$ are the Eisenstein series normalized so that the first Fourier coefficient is $1$. 
\end{rem}



\section{The rational torsion subgroup}\label{sRT}

Let $\fn\lhd A$ be a non-zero ideal and $J:=J_0(\fn)$. 
By the Lang-N\'eron theorem (see \cite{Conrad}), the group of $F$-rational points of $J$ is 
finitely generated, in particular, its torsion subgroup $\cT(\fn):=J(F)_\tor$ is finite. 
Corollary \ref{cor2.5} and Lemma~\ref{lem2.4} imply that $\cC(\fn)\subseteq \cT(\fn)$ when $\deg(\fn)=3$. 
(Also, in this case the rank of $J(F)$ is zero, so $J(F)=\cT(\fn)$.)

For a prime number $\ell$, denote by $\cT(\fn)_\ell$ the $\ell$-primary subgroup of $\cT(\fn)$. If $\ell\neq p$, then 
the Eichler-Shimura congruence relation can be used to show that $\cT(\fn)_\ell$ is Eisenstein; 
cf. \cite[Lem. 7.16]{Pal}. This implies that if ($\ell\neq p$ and $\cT(\fn)_\ell\neq 0$), then  $\ell$ is an Eisenstein prime number. 
(As far as we know, it is not known in general whether $\cT(\fn)_p$ is Eisenstein.)

\begin{thm}\label{thmRTT3}
If $\fn=T^3$, then 
$
\cT(\fn)=\cC(\fn)\cong \Z/q^2\Z$. 
\end{thm}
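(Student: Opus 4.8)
The plan is to show that $\cT(\fn)=\cC(\fn)$ by squeezing $\cT(\fn)$ between $\cC(\fn)$ and a group of the same order. We already know from Corollary~\ref{cor2.5} and Lemma~\ref{lem2.4} that $\cC(\fn)\subseteq \cT(\fn)$, and from Section~\ref{sCDG} that $\cC(\fn)\cong \Z/q^2\Z$. So it suffices to prove $\#\cT(\fn)\leq q^2$. I would split this into the prime-to-$p$ part and the $p$-part.

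\emph{The prime-to-$p$ part.} For $\ell\neq p$, the group $\cT(\fn)_\ell$ is Eisenstein (Eichler--Shimura, as recalled before the theorem), so it is a quotient of a module on which $\T(\fn)$ acts through $\T(\fn)/\fE(\fn)$. By Theorem~\ref{thmTE} (case $\fn=T^3$), $\T(\fn)/\fE(\fn)\cong \Phi_\infty\cong \Z/q^2\Z$. The canonical specialization map $\wp_\infty\colon J(F_\infty)\to \Phi_\infty$ is injective on prime-to-$p$ torsion, since $\ker\wp_\infty=\cJ^0(\cO_\infty)$ is, up to the finite group $\cJ^0_{\F_\infty}(\F_\infty)$ of order prime to $p$ only in a controlled way — more precisely, one uses that $J$ has purely toric reduction at $\infty$, so $\cJ^0(\cO_\infty)$ is an extension of a split torus's $\F_\infty$-points (of order $(q-1)^{g}$, which for $\fn=T^3$, $g=1$, is $q-1$) by a pro-$p$ group; hence its prime-to-$p$ torsion has order dividing $q-1$. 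Thus the prime-to-$p$ torsion of $\cT(\fn)$ injects (after a bounded error) into $\Phi_\infty\cong\Z/q^2\Z$, and since $\gcd(q,q-1)=1$ this error disappears. It is cleaner to instead reduce at a finite prime: pick a prime $\fq\neq T$ where $J$ has good reduction; then $\cT(\fn)_\ell$ injects into $J(\F_\fq)$, and combined with the Eisenstein property and the Eichler--Shimura relation $T_\fq = \Frob_\fq + \langle \fq\rangle\Frob_\fq^{-1}$, one gets that $\cT(\fn)_\ell$ is killed by $\gcd$ over such $\fq$ of $|\fq|+1-a_\fq$ evaluated on Eisenstein classes, i.e. by the order of $\T(\fn)/\fE(\fn)$, namely $q^2$. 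Either way the prime-to-$p$ part of $\cT(\fn)$ has order dividing $q^2$ and in fact equals $(\Z/q^2\Z)_{\neq p} = \cC(\fn)_{\neq p}$.

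\emph{The $p$-part.} This is the main obstacle, since it is not known in general whether $\cT(\fn)_p$ is Eisenstein, and the Eichler--Shimura argument fails at $p$. Here I would exploit the \emph{specific} structure of $X_0(T^3)$ and its reduction at $\infty$, which was worked out explicitly in Section~\ref{sCDG}: $\Phi_\infty\cong\Z/q^2\Z$ and $\wp_\infty\colon\cC(\fn)\to\Phi_\infty$ is an isomorphism. The key point is that $\ker(\wp_\infty\colon J(F_\infty)\to\Phi_\infty)=\cJ^0(\cO_\infty)$; its torsion is the torsion of the formal/toric part, which for split purely toric reduction is the torsion of $(\cO_\infty^\times)^{g}$ modulo the period lattice — this has \emph{no} $p$-torsion because $\cO_\infty^\times$ has none ($\F_\infty^\times = \F_q^\times$ has order prime to $p$, and $1+\pi\cO_\infty$ is uniquely $p$-divisible as a pro-$p$ group in residue characteristic $p$... wait, it is a pro-$p$ group, so it \emph{does} have $p$-torsion potentially — actually $1+\pi\cO_\infty\cong\Z_p^{\aleph_0}$-type has no torsion at all since it is torsion-free as a $\Z_p$-module of this shape). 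Thus $\cJ^0(\cO_\infty)_{\tor}$ has order prime to $p$, so $\wp_\infty$ is \emph{injective on $p$-primary torsion}. Therefore $\cT(\fn)_p\hookrightarrow\Phi_\infty$ has order dividing $p$-part of $q^2$. Combining: $\cT(\fn)\hookrightarrow\Phi_\infty\cong\Z/q^2\Z$ via $\wp_\infty$ (on the whole torsion subgroup, after the prime-to-$p$ bound above), so $\#\cT(\fn)\leq q^2=\#\cC(\fn)$. Since $\cC(\fn)\subseteq\cT(\fn)$, equality holds and $\cT(\fn)=\cC(\fn)\cong\Z/q^2\Z$. I expect the delicate point requiring care is justifying that $\wp_\infty$ is injective on all of $\cT(\fn)$ — equivalently that $\cJ^0(\cO_\infty)$ is torsion-free — which follows from the split-purely-toric reduction at $\infty$ together with torsion-freeness of $1+\pi\cO_\infty$; the rational cuspidal divisor $c_1$ of order $q^2$ then forces $\wp_\infty$ to be surjective on $p$-parts as well, pinning everything down.
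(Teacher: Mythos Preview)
Your overall strategy matches the paper's: kill the prime-to-$p$ part using that $\cT(\fn)_\ell$ is Eisenstein and the only Eisenstein prime is $p$, then show $\wp_\infty$ is injective on $\cT(\fn)_p$ and compare with $\cC(\fn)_p\overset{\wp_\infty}{\cong}(\Phi_\infty)_p$. For the prime-to-$p$ part you overcomplicate things: once you know $\T(\fn)/\fE(\fn)\cong\Z/q^2\Z$, the only Eisenstein prime number is $p$, so by the Eichler--Shimura observation you already quoted, $\cT(\fn)_\ell=0$ for every $\ell\neq p$. No specialization at $\infty$ or at an auxiliary good prime is needed.

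Where you genuinely diverge from the paper is in the $p$-part. The paper extends $G=\cT(\fn)_p$ to a finite flat group scheme $\cG\subset\cJ$ over $\cO_\infty$, takes the connected--\'etale sequence $0\to\cG^0\to\cG\to\cG^{\mathrm{et}}\to 0$, and uses Cartier duality: $\cG^0_{\F_\infty}\subset\cJ^0_{\F_\infty}\cong\gm_m^g$ forces the Cartier dual of $\cG^0_{\F_\infty}$ to be \'etale, while the Cartier dual of the constant $p$-group $G$ over $F_\infty$ is connected; this tension forces $\cG^0=0$, whence $\wp_\infty$ is injective on $G$. Your route is the more elementary observation that $\ker\wp_\infty=\cJ^0(\cO_\infty)$ has no $p$-torsion: it sits in an extension of $\cJ^0_{\F_\infty}(\F_\infty)\cong(\F_q^\times)^g$ by the kernel of reduction, and for split purely toric reduction the latter is $(1+\pi\cO_\infty)^g$, which is torsion-free in characteristic $p$ because $(1+a)^p=1+a^p=1$ forces $a=0$ in the domain $\cO_\infty$. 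This is correct, but your write-up should drop the detour through ``$(\cO_\infty^\times)^g$ modulo the period lattice'' (the lattice does not meet $(\cO_\infty^\times)^g$) and simply invoke the rigid-analytic uniformization to identify the formal group of $\cJ$ with $\widehat{\gm}_m^g$. Your argument is more hands-on and avoids Cartier duality; the paper's argument is cleaner to state and generalizes without needing the explicit shape of the formal group.
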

\begin{proof}
Since $\T(\fn)/\fE(\fn)\cong \Z/q^2\Z$, the only Eisenstein prime number is $p$, so $\cT(\fn)_\ell=0$ if $\ell\neq p$. 
We just need to 
prove that $G:=\cT(\fn)_p=\cC(\fn)$. Let $\cJ$ denote the N\'eron model of $J$ over $\cO_\infty$. 
By the extension property for \'etale points of N\'eron models, $G$ extends to a finite flat subgroup 
scheme $\cG$ of $\cJ$. Consider the connected-\'etale sequence of $\cG$: 
$$
0\to \cG^0\to \cG\to \cG^{\mathrm{et}}\to 0. 
$$
The formation of this sequence is compatible with any local base change to another henselian local ring, 
so the special fibre $\cG^0_{\F_\infty}$ is a subgroup scheme of $\cJ^0_{\F_\infty}$.  Since 
$\cJ^0_{\F_\infty}$ is isomorphic to a product of copies of the multiplicative group $\gm_{m, \F_\infty}$, 
whose $p$-primary torsion is connected, 
the scheme-theoretic intersection $\cG_{\F_\infty}\cap \cJ^0_{\F_\infty}$ is $\cG^0_{\F_\infty}$, and   
the Cartier dual of $\cG^0_{\F_\infty}$ is \'etale. On the other hand, the Cartier dual of $G$ 
is connected, since $G$ is a constant $p$-primary group scheme. This implies that the Cartier dual of $\cG^0$ has 
connected generic fibre but \'etale closed fibre, which is not possible unless $\cG^0$ is trivial. This 
implies that 
the canonical specialization map  
$$
\wp_\infty: \cT(\fn)_p\To \Phi_\infty 
$$ 
is injective. But, as we know, the restriction of $\wp_\infty$ to $\cC(\fn)_p$ is surjective, so  
$\cC(\fn)_p=\cT(\fn)_p$.  
\end{proof}

\begin{example}
Assume $q=2$. In this case $X_0(T^3)$ is an elliptic curve given by the Weierstrass equation 
$$
E: Y^2+TXY=X^3+T^2,
$$
see \cite[(9.7.3)]{GR}. One easily checks that $E(F)\cong \Z/4\Z$, generated by $(T,T)$. 
This agrees with Theorem \ref{thmRTT3}. 
\end{example}

\begin{thm}\label{thmRTT2}
If $\fn=T^2(T-1)$, then 
$
\cT(\fn)=\cC(\fn)\cong \Z/q(q^2-1)\Z$. 
\end{thm}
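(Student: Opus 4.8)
The plan is to imitate the proof of Theorem~\ref{thmRTT3} almost verbatim, adapting it to the level $\fn=T^2(T-1)$. First I would note that by Theorem~\ref{thmTE} we have $\T(\fn)/\fE(\fn)\cong\Phi_\infty\cong\Z/q(q^2-1)\Z$, and from Section~\ref{sCDG} we already know $\cC(\fn)\overset{\wp_\infty}{\cong}\Phi_\infty\cong\Z/q(q^2-1)\Z$, with the canonical specialization $\wp_\infty\colon\cC(\fn)\to\Phi_\infty$ an isomorphism. Also, since $\deg(\fn)=3$, the rank of $J(F)$ is zero, so $\cT(\fn)=J(F)$, and $\cC(\fn)\subseteq\cT(\fn)$ by Corollary~\ref{cor2.5} and Lemma~\ref{lem2.4}. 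It therefore suffices to prove the reverse inclusion $\cT(\fn)\subseteq\cC(\fn)$.

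The next step is to dispose of the prime-to-$p$ part. Since the only prime numbers dividing the order of $\T(\fn)/\fE(\fn)=\Z/q(q^2-1)\Z$ are $p$ and the primes dividing $q^2-1$, I would argue as follows: for $\ell\neq p$, $\cT(\fn)_\ell$ is Eisenstein (by the Eichler--Shimura argument cited from \cite[Lem.~7.16]{Pal} in the discussion preceding Theorem~\ref{thmRTT3}), so $\cT(\fn)_\ell$ is a quotient-compatible $\T(\fn)/\fE(\fn)$-module; more precisely the specialization map $\wp_\infty$ is injective on $\cT(\fn)_\ell$ as well — this needs a small argument. For $\ell\neq p$, injectivity of $\wp_\infty$ on $\cT(\fn)_\ell$ follows because $\cT(\fn)_\ell$ injects into $J(F_\infty)_\ell$, and the kernel of reduction $J_0(\fn)(F_\infty)\to\Phi_\infty$ is an extension of a pro-$p$ group by the $F_\infty$-points of the connected component, whose prime-to-$p$ torsion is that of a torus; one then invokes that $J$ has split purely toric reduction at $\infty$ and that a constant $\ell$-group cannot embed in the formal/connected part — essentially the toric-reduction argument. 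Combined with surjectivity of $\wp_\infty$ on $\cC(\fn)_\ell$, this forces $\cT(\fn)_\ell=\cC(\fn)_\ell$ for all $\ell\neq p$.

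For the $p$-primary part, I would run exactly the argument of Theorem~\ref{thmRTT3}: let $G:=\cT(\fn)_p$, extend it by the N\'eron extension property to a finite flat subgroup scheme $\cG\subseteq\cJ$ over $\cO_\infty$, and look at the connected--\'etale sequence $0\to\cG^0\to\cG\to\cG^{\et}\to 0$. Because $\cJ^0_{\F_\infty}$ is a split torus, whose $p$-primary torsion is connected, the scheme-theoretic intersection $\cG_{\F_\infty}\cap\cJ^0_{\F_\infty}$ equals $\cG^0_{\F_\infty}$ and its Cartier dual is \'etale; but $G$ is a constant $p$-group so the Cartier dual of $\cG$ (hence of $\cG^0$) is connected on the generic fibre. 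A group scheme with connected generic fibre and \'etale special fibre must be trivial, so $\cG^0=0$, i.e.\ $\wp_\infty$ is injective on $\cT(\fn)_p$. Since $\wp_\infty$ restricted to $\cC(\fn)_p$ is surjective onto $\Phi_{\infty,p}$, we get $\cC(\fn)_p=\cT(\fn)_p$.

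Putting the two cases together gives $\cT(\fn)=\cC(\fn)\cong\Z/q(q^2-1)\Z$. The only genuinely new point compared to Theorem~\ref{thmRTT3} is that $\Phi_\infty$ is now not a $p$-group — its order is $q(q^2-1)$, divisible by several primes — so I must handle the $\ell\neq p$ part explicitly rather than dismissing it by ``the only Eisenstein prime is $p$''; that case-splitting, and making the injectivity of $\wp_\infty$ on $\cT(\fn)_\ell$ clean, is the main obstacle, though it is routine given the split toric reduction at $\infty$ and the Eisenstein-ness of $\cT(\fn)_\ell$. (One can in fact streamline everything: $\wp_\infty$ is injective on all of $\cT(\fn)$ — on the $p$-part by the connected--\'etale argument, on the prime-to-$p$ part by the toric reduction argument — and surjective on $\cC(\fn)$, which together with $\cC(\fn)\subseteq\cT(\fn)$ and $|\cC(\fn)|=|\Phi_\infty|$ forces equality throughout.)
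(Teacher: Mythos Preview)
Your treatment of the $p$-primary part is correct and matches the paper's proof verbatim. The gap is in the prime-to-$p$ part: your claim that $\wp_\infty$ is injective on $\cT(\fn)_\ell$ for $\ell\neq p$ is not justified, and the heuristic you give for it is wrong. The kernel of $J(F_\infty)\to\Phi_\infty$ is, after passing through the pro-$p$ formal group, the group $\cJ^0_{\F_\infty}(\F_\infty)\cong(\F_q^\times)^{q-1}$, which has plenty of $\ell$-torsion whenever $\ell\mid q-1$. A constant $\ell$-group can certainly embed in a split torus in characteristic $p$ (e.g.\ $\Z/\ell\Z\cong\mu_\ell\subset\gm_m$ when $\ell\mid q-1$), so the Cartier-duality trick that works for $p$ has no force here. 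The Eisenstein-ness of $\cT(\fn)_\ell$ only bounds its \emph{exponent} by the $\ell$-part of $q^2-1$; it does not by itself bound its order, so that line does not close the gap either.

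The paper handles $\ell\neq p$ by a completely different route: it works at the prime $T$ rather than at $\infty$. Using the Katz--Mazur style description of the special fibre of $X_0(T^2(T-1))$ over $\cO_T$ (three projective lines through a single supersingular point, one with multiplicity $q-1$), together with the Edixhoven/Lorenzini analysis, one finds that $J$ has purely \emph{additive} reduction at $T$ with $\#\Phi_T=q^2-1$. Since a unipotent group has no $\ell$-torsion for $\ell\neq p$, the specialization $\cT(\fn)_\ell\hookrightarrow\Phi_T$ is genuinely injective, giving $\#\cT(\fn)_\ell\leq\#(\Z/(q^2-1)\Z)_\ell=\#\cC(\fn)_\ell$. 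To repair your argument you would need either to switch to the place $T$ as the paper does, or to supply an independent bound on the Eisenstein $\ell$-torsion in the toric part of $\cJ_{\F_\infty}$, which is not routine.
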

\begin{proof}
As in the proof of Theorem \ref{thmRTT3} we have an injection $\cT(\fn)_p\hookrightarrow \Phi_\infty$. 
Since we know that $\wp_\infty: \cC(\fn)\to \Phi_\infty$ is an isomorphism, 
$\cC(\fn)_p=\cT(\fn)_p$. Now it is enough to show that for any prime $\ell\neq p$, 
$$
\# \cT(\fn)_\ell\leq \# \cC(\fn)_\ell=\# (\Z/(q^2-1)\Z)_\ell. 
$$

The theory of Chapters 12 and 13 in \cite{KM} applied in the function field setting implies that $X_0(\fn)$ has a model 
over $\cO_{T}$ whose special fibre consists of three projective lines intersecting transversally in one point (this 
point corresponds to the supersingular Drinfeld $A$-module of rank $2$ over $\overline{\F}_T$). Two of these 
projective lines are reduced and the third one has multiplicity $q-1$. 

Adapting the methods of \cite{Edixhoven} and \cite{LorenziniTP} to this special situation, one shows that the reduction of $J$ at $T$ is 
purely additive, i.e., $\cJ^0_{\overline{\F}_T}$ is a unipotent linear group, and 
$
\#\Phi_T 
=q^2-1$.  
Since a unipotent group has no points of order $\ell\neq p$, 
the N\'eron mapping property implies that $\cT(\fn)_\ell$ maps injectively into $\Phi_T$. Therefore, 
$$
\#\cT(\fn)_\ell\leq \# (\Phi_T)_\ell=\# (\Z/(q^2-1)\Z)_\ell, 
$$
as was required to show. 
\end{proof}

\begin{rem}
The previous proof is not very satisfactory since the analogues of the results of \cite{KM}, \cite{LorenziniTP} and \cite{Edixhoven} 
for Drinfeld modular curves have not yet appeared in literature. 
\end{rem}

\begin{example}
Assume $q=2$. In this case $X_0(T^2(T-1))$ is an elliptic curve given by the Weierstrass equation 
$$
E: Y^2+TXY+TY=X^3,
$$
see \cite[(9.7.2)]{GR}. It is easy to check by elementary methods that $E(F)\cong \Z/6\Z$, 
generated by $(T,T)$. Moreover, using the Tate algorithm, one computes that the 
special fibre of $E$ at $T$ is of Kodaira type IV. This means 
that the special fibre of the minimal regular model of $E$ over $\cO_T$ consists of three 
projective lines intersecting in one point, the reduction of $E$ at $T$ is additive, and $\Phi_T\cong \Z/3\Z$. 
This agrees with Theorem \ref{thmRTT2} and the claims in its proof. 
\end{example}


\section{Characteristic as an Eisenstein prime number}\label{sEPN}

\begin{thm}\label{thm-pEis} 
Assume $\fn = \fp^2 \fm$, where $\fp \lhd A$ is prime and $\fm \lhd A$ is a proper non-zero ideal. 
$($We do not assume $\fm$ to be coprime to $\fp$.$)$
Then $p$ divides $\# \T(\fn)/\fE(\fn)$, and hence also $\# \T(\fn)^0/\fE(\fn)^0$.  
\end{thm}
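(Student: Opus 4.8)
The plan is to exhibit an explicit Eisenstein element in the cuspidal divisor group $\cC(\fn)$ whose order is divisible by $p$, and which is defined over $F$; since $\fn$ is not square-free and $\fn \neq \fp^2$, such an element should be built from cusps lying "at level $\fp^2$". Concretely, I would work with the functorial degeneracy map $\beta\colon X_0(\fn) \to X_0(\fp^2)$ of $\S\ref{ssDMC}$, which sends cusps to cusps and is defined over $F$. The curve $X_0(\fp^2)$ has cusps represented (by Lemma~\ref{lemCD1}) by $\begin{pmatrix} 1\\ 1\end{pmatrix}$, $\begin{pmatrix} 1\\ \fp\end{pmatrix}$, $\begin{pmatrix} 1\\ \fp^2\end{pmatrix}$, and the divisor class $[\fp] - \tfrac{1}{2}([1]+[\fp^2])$ — or rather, an integer multiple clearing denominators — has order exactly $|\fp| = q^{\deg\fp}$ in $J_0(\fp^2)$ by Gekeler's computation of cuspidal divisor groups of prime-power level (this is essentially \cite[(5.11)]{Uber} / \cite{GekelerCDG}); in particular its order is divisible by $p$. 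Pulling this class back along $\beta$ (and pushing forward, or simply taking the induced map on Jacobians $\beta^*\colon J_0(\fp^2)\to J_0(\fn)$) gives a cuspidal divisor $D \in \cC(\fn)$.

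\textbf{Rationality and the Eisenstein property.} Two things must be checked about $D$. First, $D$ is $F$-rational: this follows because $\beta^*$ is a morphism of abelian varieties over $F$ and the class on $X_0(\fp^2)$ we start from is supported on the cusps $[1],[\fp],[\fp^2]$, which are each individually $F$-rational (their $\tilde b = \gcd(b,\fp^2/b)$ has degree $0$ or, for $b = \fp$, equals $\fp$ which has the cusps with that $b$ conjugate but here there is only the one class $[\fp]$ fixed by the relevant Galois action — here I would instead invoke Remark~\ref{rem3.6}(2): all cusps are $F_+(\fn)$-rational and the specific combination is Galois-stable). Second, $D$ is Eisenstein as an element of the $\T(\fn)$-module $J_0(\fn)$: for a prime $\fq \nmid \fn$, the Hecke correspondence $T_\fq$ acts on a cusp $[\mathfrak d]$ of $X_0(\fn)$ by $T_\fq[\mathfrak d] = (|\fq|+1)[\mathfrak d]$ — this is the computation in Lemma~\ref{lem2.4} and its proof, which only uses $\fq \nmid \fn$ and goes through verbatim for any $\fn$ (not just $\deg\fn=3$), since the matrices $\begin{pmatrix}1 & u\\ 0 & \fq\end{pmatrix}$ and $\begin{pmatrix}\fq & 0\\0&1\end{pmatrix}$ fix every cusp of $X_0(\fn)$ of the standard form when $\fq\nmid\fn$. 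Hence every element of $\cC(\fn)$ built from such cusps is Eisenstein; in particular $D$ is.

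\textbf{Nontriviality of order modulo $p$.} The crux is to show $p \mid \mathrm{ord}(D)$, i.e. that the pullback $\beta^*$ does not kill the $p$-part of the chosen class. For this I would use the component group at $\fp$ (or at $\infty$) as a detector: compose with the canonical specialization map and show that the image of $D$ in some $\Phi_v$ already has order divisible by $p$. Alternatively — and this is cleaner — I would argue via Fourier coefficients / the Eisenstein series $E_{\fp^2\fm}$, showing that the relevant cuspidal divisor pairs nontrivially mod $p$ against a suitable harmonic cochain, OR invoke that the degeneracy map $\beta$ has a section up to prime-to-$p$ degree on the relevant cusps. The honest obstacle is this last point: one must control $\ker\beta^*$ on the $p$-primary part of the cuspidal subgroup, and the degree of $\beta$ is a power of $q$, hence divisible by $p$, so $\beta^*$ followed by $\beta_*$ is multiplication by a $p$-power and gives no information mod $p$. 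I expect the right move is to instead directly compute the order of $D$ inside $\cC(\fn)$ using the relations among cuspidal divisors coming from pullbacks of principal divisors on $X_0(\fn/\fq)$ for $\fq \mid \fn$ (the method of Section~\ref{sCDG}), isolating the sub-configuration of cusps $\{[\mathfrak d] : \fp^2 \mid \mathfrak d \text{ or analogous}\}$ and showing the resulting cyclic quotient has order a multiple of $|\fp|$. Once $p \mid \#\cC(\fn)$ with an Eisenstein, $F$-rational witness, the inclusion $\fE(\fn) \subseteq \mathrm{Ann}_{\T(\fn)}(D)$ forces $p \mid \#\bigl(\T(\fn)/\fE(\fn)\bigr)$, and then $p \mid \#\bigl(\T(\fn)^0/\fE(\fn)^0\bigr)$ by Lemma~\ref{lemTE0}(3) (the exponent of $\T/\fE$ divides the order of $\T^0/\fE^0$).
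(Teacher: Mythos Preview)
Your approach has a fundamental gap at the very first step: you want to start from an Eisenstein cuspidal divisor on $X_0(\fp^2)$ whose order is divisible by $p$, and pull it back to $X_0(\fn)$. No such divisor exists. The paper's Lemma~\ref{prop6.6} shows $\cC(\fp^2)(F)_p = 0$, and Example~\ref{exmP2} exhibits a case ($q=2$, $\fp=T^2+T+1$) where $\cC(\fp^2)[\fE(\fp^2)] \cong \Z/5\Z$ has order coprime to $p$. The height-one cusps $z_i = \begin{pmatrix} u_i \\ \fp\end{pmatrix}$ on $X_0(\fp^2)$ do contribute $p$-torsion to $\cC(\fp^2)$, but the Hecke operators $T_\fq$ permute them nontrivially, so the individual $z_i - [\infty]$ are not Eisenstein; and the Galois-stable, Eisenstein sum $\sum_i(z_i - [\infty])$ has order coprime to $p$. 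This is precisely why $\fn = \fp^2$ is excluded from the theorem, and it means there is nothing on $X_0(\fp^2)$ to pull back. (Your citation of \cite[(5.11)]{Uber} is also off: that computes $\cC(\fn)$ for $\fn$ irreducible of degree $3$, not for $\fn=\fp^2$.) A secondary issue: the argument of Lemma~\ref{lem2.4} does \emph{not} go through verbatim for general $\fn$---it uses that all cusps have $\tilde b$ of degree $\leq 1$, and Remark~\ref{rem3.6} notes $\cC(\fn)$ can fail to be Eisenstein.

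The paper therefore does not pull back from $X_0(\fp^2)$. It reduces (Lemma~\ref{lem6.5}, showing $f^\ast\colon J_0(\fm)\to J_0(\fn)$ has no $p$-torsion in its kernel via the rigid-analytic uniformization of \cite{GR}) to the two minimal cases $\fn = \fp^2\fq$ with $\fq\neq \fp$ prime, and $\fn = \fp^3$. For $\fp^2\fq$ (Lemma~\ref{lem6.3}) the divisor $c_0 = [0]-[\infty]$ on $X_0(\fn)$ itself is Eisenstein, and its $p$-order is detected by specializing into the component group $\Phi_\fq$ at the \emph{other} prime $\fq$, whose structure is known from \cite[Thm.~5.3]{PW}. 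For $\fp^3$ (Lemma~\ref{lem6.4}) one takes $c = \sum_i(\tilde{z}_i - [0])$ over the height-one cusps and bounds its order from below using the Drinfeld discriminant: one computes the divisor of $(\Delta_\fp/\Delta)^{|\fp|}(\Delta_\fp/\Delta_{\fp^2})$ and invokes Gekeler's result \cite[Cor.~3.5]{Discriminant} that $\Delta/\Delta_a$ has no $|\fp|$-th root in $\cO(\Omega)^\times$.
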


We will prove the theorem in several steps. We start with a simple 
observation. Consider the following condition:  
\begin{itemize}
\item[$(\star)$] There exists $c\in \cC(\fn)$ which is Eisenstein and has order divisible by $p$.
\end{itemize}
\begin{lem}\label{lem6.2} 
If $(\star)$ holds, then $p$ divides $\# \T(\fn)/\fE(\fn)$.  
\end{lem}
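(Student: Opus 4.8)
The plan is to realize $\T(\fn)/\fE(\fn)$ as mapping onto a subgroup of $\cC(\fn)$ that contains the Eisenstein element $c$ supplied by $(\star)$, and then read off the $p$-divisibility from the order of $c$.

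First I would recall that $\T(\fn)$ acts on $J_0(\fn)$ and preserves $\cC(\fn)$ (as noted at the beginning of Section \ref{sCDG}). Hence $\T(\fn)c$ is a finite subgroup of $\cC(\fn)$, and $t\mapsto tc$ defines a homomorphism of $\T(\fn)$-modules $\varphi\colon \T(\fn)\to \cC(\fn)$. The key point is that $\varphi$ kills $\fE(\fn)$: since $c$ is Eisenstein, $T_\fp c=(|\fp|+1)c$ for every prime $\fp\nmid\fn$, so each generator $T_\fp-|\fp|-1$ of $\fE(\fn)$ annihilates $c$, and therefore so does all of $\fE(\fn)$. Thus $\varphi$ factors through a surjection of abelian groups $\overline{\varphi}\colon \T(\fn)/\fE(\fn)\twoheadrightarrow \T(\fn)c$.

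Next, because $1\in\T(\fn)$, the image $\T(\fn)c$ contains $c$, which by $(\star)$ has order divisible by $p$; hence $p\mid \#(\T(\fn)c)$. Since $\overline{\varphi}$ is a surjection of finite abelian groups (finiteness of $\T(\fn)/\fE(\fn)$ being Lemma \ref{lemTE0}), $\#(\T(\fn)c)$ divides $\#(\T(\fn)/\fE(\fn))$, and so $p\mid \#(\T(\fn)/\fE(\fn))$, as required. The statement about $\#\T(\fn)^0/\fE(\fn)^0$ in the ambient theorem would then follow from Lemma \ref{lemTE0}(iii), since the exponent of $\T(\fn)/\fE(\fn)$ divides $\#\T(\fn)^0/\fE(\fn)^0$.

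I do not expect any real obstacle in this lemma itself: the only step requiring a word of care is the well-definedness of $\varphi$ on the quotient, which is precisely the Eisenstein hypothesis on $c$. The substantive difficulty of the theorem lies entirely downstream, in verifying that condition $(\star)$ actually holds for $\fn=\fp^2\fm$ with $\fm$ a proper nonzero ideal.
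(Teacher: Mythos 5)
Your proof is correct and is essentially the paper's argument in different packaging: the paper assumes $p$ invertible in $\T(\fn)/\fE(\fn)$, writes $1=pt+e$ with $e\in\fE(\fn)$, and applies this to a multiple of $c$ of order exactly $p$ to get a contradiction, which is just the contrapositive of your direct observation that $\fE(\fn)$ annihilates $c$, so $\T(\fn)/\fE(\fn)$ surjects onto the finite module $\T(\fn)c\ni c$ whose order is divisible by $p$. Both hinge on exactly the same two facts (the Eisenstein property of $c$ and the finiteness of $\T(\fn)/\fE(\fn)$ from Lemma \ref{lemTE0}), so there is nothing to add.
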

\begin{proof}
By considering an appropriate multiple of $c$, we can assume $c$ has order $p$. 
Suppose on the contrary that $p$ is invertible in $\T(\fn)/\fE(\fn)$. 
Then we can find $t\in \T(\fn)$ and $e\in \fE(\fn)$ such that $1=pt+e$. Since 
$c$ is annihilated by $e$ and $p$, we get that $1\in \T(\fn)$ annihilates $c$. This is a contradiction since   
the action of $1\in \T(\fn)$ is the same as the action of $1\in \Z\subset \T(\fn)$. 
\end{proof}

Let $[0]$ and $[\infty]$ be the cusps of $X_0(\fn)$ corresponding to $\begin{pmatrix} 0 \\ 1\end{pmatrix}$ 
and $\begin{pmatrix} 1 \\ 0\end{pmatrix}$, respectively. (For $\fn$ of degree $3$ this slightly differs 
from our earlier notation, where we were denoting $[0]$ by $[1]$.) These cusps are always rational, so 
$c_0:=[0]-[\infty]$ in $J_0(\fn)$ is an $F$-rational torsion point. 

\begin{lem}\label{lem6.3}
The cuspidal divisor $c_0$ is Eisenstein. If the level $\fn$ is not square-free and is divisible 
by a prime $\fq\lhd A$ such that $\fn/\fq$ is coprime to $\fq$, then the order of $c_0$ 
is divisible by $p$. In particular, $(\star)$ holds for $\fn$. 
\end{lem}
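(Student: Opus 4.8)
\textbf{Proof plan for Lemma \ref{lem6.3}.}

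The plan is to prove the three assertions in turn, the first two by direct computation and the third as an immediate consequence. First I would show that $c_0=[0]-[\infty]$ is Eisenstein. As in the proof of Lemma \ref{lem2.4}, the key point is that the cusps $[0]$ and $[\infty]$ are fixed by the matrices appearing in the definition \eqref{eqDefTm} of $T_\fp$ for $\fp\nmid\fn$: the cusp $\begin{pmatrix}1\\0\end{pmatrix}$ is fixed by every upper-triangular matrix, and $\begin{pmatrix}0\\1\end{pmatrix}$ is fixed (up to the $\G_0(\fn)$-action, using that $\fp$ is invertible mod $\fn$) by $\begin{pmatrix}a&b\\0&d\end{pmatrix}$ with $(a)+\fn=A$. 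Summing over the $|\fp|+1$ cosets therefore gives $T_\fp[0]=(|\fp|+1)[0]$ and $T_\fp[\infty]=(|\fp|+1)[\infty]$ in $J_0(\fn)$, so $T_\fp c_0=(|\fp|+1)c_0$ for all $\fp\nmid\fn$, i.e.\ $c_0$ is Eisenstein.

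Next I would pin down the order of $c_0$, or at least its divisibility by $p$. The standard tool is Gekeler's formula for the order of a cuspidal divisor in terms of a modular-unit computation on $X_0(\fn)$, or equivalently the relation between $c_0$ and the boundary of the relevant Eisenstein series. Concretely, one uses the functorial degeneracy morphism $X_0(\fn)\to X_0(\fq)$ (coming from $(\phi,C_\fn)\mapsto(\phi,C_\fq)$), under which $[0]$ and $[\infty]$ map to the two cusps of $X_0(\fq)$; pulling back modular units from $X_0(\fq)$ controls the prime-to-$p$ part of the order of $c_0$, while the $p$-part is exactly what is produced by the non-square-free hypothesis. The mechanism is the following: when $\fq^2\mid\fn$ the map $X_0(\fn)\to X_0(\fn/\fq)$ has cusps above $[\infty]$ and above $[0]$ whose ramification indices differ by a factor that is a power of $|\fq|$ (hence divisible by $p$), and comparing the two preimages of a principal divisor on $X_0(\fn/\fq)$ supported on cusps forces a relation of the shape $(\text{power of }|\fq|)\cdot c_0 \equiv (\text{prime-to-}p\text{ part})$, which shows the order of $c_0$ is divisible by $p$. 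I would isolate this as the precise computation: track the ramification of the cusps $\begin{pmatrix}0\\1\end{pmatrix}$ and $\begin{pmatrix}1\\0\end{pmatrix}$ under $X_0(\fn)\to X_0(\fn/\fq)$ using Lemma \ref{lemCD1}(i), and read off that $p\mid\mathrm{ord}(c_0)$ from the resulting divisor relation in $\Pic^0$.

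Finally, the statement $(\star)$ holds for $\fn$ is immediate: $c_0\in\cC(\fn)$ has just been shown to be Eisenstein and to have order divisible by $p$. Combined with Lemma \ref{lem6.2}, this already yields that $p$ divides $\#\T(\fn)/\fE(\fn)$.

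The main obstacle I anticipate is the second step: making the claim "the order of $c_0$ is divisible by $p$" genuinely rigorous without invoking a large machinery of modular units. One clean route is to cite Gekeler's explicit determination of the cuspidal divisor group (or of the orders of the $c_\fd$) and extract the $p$-part; another is the degeneracy-map argument sketched above, which is self-contained but requires care with ramification indices at cusps in the function-field setting. I would lean on the degeneracy-morphism computation together with Lemma \ref{lemCD1}, since the ramification data there is exactly of the form $|\fq|^{\lfloor r/2\rfloor}$ and is visibly divisible by $p$ once $r\ge 2$, and this is the cleanest way to see where the characteristic enters.
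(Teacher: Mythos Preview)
Your treatment of the first assertion is essentially correct and matches the paper's argument (the paper handles $[0]$ slightly differently, via $T_\fp[0]=T_\fp W_\fn[\infty]=W_\fn T_\fp[\infty]=(|\fp|+1)[0]$, but your direct verification works too).

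The second assertion, however, is where your plan goes wrong in two related ways. First, you have misread the hypothesis: the prime $\fq$ satisfies $\fq\parallel\fn$ (that is, $\fn/\fq$ is coprime to $\fq$), and ``$\fn$ not square-free'' means some \emph{other} prime $\fp$ has $\fp^2\mid\fn$, i.e.\ $\fn/\fq$ is not square-free. Your sketch repeatedly assumes $\fq^2\mid\fn$, which is precisely what is excluded. Second, and more fundamentally, the degeneracy-map/ramification argument you outline only produces \emph{relations} among cuspidal divisors (pullbacks of principal divisors are principal), hence only \emph{upper} bounds on orders. To prove $p\mid\mathrm{ord}(c_0)$ you need a \emph{lower} bound, and nothing in your sketch supplies one; a relation of the shape ``$(\text{power of }|\fq|)\cdot c_0=(\text{something})$'' cannot by itself force $p$ to divide the order of $c_0$.

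The paper's actual mechanism is quite different and exploits the hypothesis $\fq\parallel\fn$ directly. At $\fq$ the curve $X_0(\fn)$ has a regular model whose special fibre consists of two copies $Z,Z'$ of $X_0(\fn/\fq)_{\F_\fq}$ meeting transversally (possibly with some chains of $\mathbb{P}^1$'s), and one computes that $(\Phi_\fq)_p=\langle z\rangle_p$ with $z=Z-Z'$. The key input is that when $\fn/\fq$ is not square-free one has $(\Phi_\fq)_p\neq 0$ (this uses the explicit description of $\Phi_\fq$ from \cite{PW}). Since $[0]$ and $[\infty]$ reduce to smooth points on $Z$ and $Z'$ respectively (using that $W_\fq$ swaps the two sheets), the specialization map gives $\wp_\fq(c_0)=z$, and hence $p\mid\mathrm{ord}(z)\mid\mathrm{ord}(c_0)$. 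This is a genuine lower-bound argument via a $\T(\fn)$-equivariant homomorphism to a group where the $p$-part is visible, and it is what your plan is missing.
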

\begin{proof}
Any upper-triangular matrix in $\GL_2(F)$ fixes $\begin{pmatrix} 1 \\ 0\end{pmatrix}\in \p^1(F)$. 
Thus, $T_\fp$ with $\fp\nmid \fn$, as a correspondence on $X_0(\fn)$, satisfies $T_\fp[\infty]=(|\fp|+1)[\infty]$. 
The Atkin-Lehner involution $W_\fn$ interchanges $[0]$ and $[\infty]$, and commutes with $T_\fp$, so 
$$
T_\fp[0]=T_\fp W_\fn[\infty]= W_\fn T_\fp[\infty]=(|\fp|+1)W_\fn[\infty]=(|\fp|+1)[0]. 
$$
This shows that $c_0$ is Eisenstein. 

Assume $\fq$ is a prime strictly dividing $\fn$. 
As follows from Theorem 5.1 and Lemma 5.2 in \cite{PW}, 
the special fibre of the minimal regular model of $X_0(\fn)$ over $\cO_\fq$ is geometrically reduced 
and consists of two geometrically irreducible components $Z, Z'$, both isomorphic to $X_0(\fn/\fq)_{\F_\fq}$, 
intersecting transversally in a certain number 
of points. Moreover, $Z, Z'$ might also be joined by a certain number of chains of projective lines of length $q+1$ as in Figure \ref{Fig2}.  
(The number of points in the intersection of $Z$ and $Z'$, as well as the number of chains of projective lines, 
can be deduced from \cite[Lem. 5.2]{PW}.) 
In other words, the special fibre $X_0(\fn)_{\overline{\F}_\fq}$ of this model looks like the Figures in Section \ref{sCDG}.  
A calculation similar to the calculation in $\S$\ref{ssCDGxyz} shows that the image of multiplication $\Phi_\fq\xrightarrow{q+1}\Phi_\fq$ 
lies in the subgroup of $\Phi_\fq$ generated by $z:=Z-Z'$. Thus, we have an equality of $p$-primary subgroups 
$(\Phi_\fq)_p=\langle z\rangle_p$. If $\fn/\fq$ is not square-free, then $(\Phi_\fq)_p\neq 0$; this follows 
from Theorem 5.3 in \cite{PW}, which gives a complete description of $\Phi_\fq$ as an abelian group. 

Now consider the image of $c_0$ under the canonical specialization $\wp_\fq$. The reductions of the cusps $[0], [\infty]$ lie  
on the components $Z, Z'$, away from the singular points of $X_0(\fn)_{\overline{\F}_\fq}$. This is 
a consequence of the reduction properties of the cusps (cf. \cite[Ch. 10]{KM}). 
The Atkin-Lehner involution $W_\fq$ interchanges the components $Z$ and $Z'$, and $W_{\fn/\fq}$ maps 
each component to itself; this follows from the modular interpretation of $X_0(\fn)_{\overline{\F}_\fq}$. 
Assume that we have labelled these two components so that $[\infty]\in Z'$.  
Since $W_\fn=W_\fq W_{\fn/\fq}$ and $W_\fn[\infty]=[0]$, we conclude that the reduction 
of $[0]$ lies on $Z$. Hence $\wp_\fq(c_0)= z$. But from the previous paragraph we know that if $\fn/\fq$ is not square-free, then $p$ divides 
the order of $z$. This implies that $p$ also divides the order of $c_0$. 
\end{proof}

\begin{lem}\label{lem6.4}
Assume $\fn=\fp^3$, where $\fp\lhd A$ is prime. There exists $c\in \cC(\fn)$ satisfying $(\star)$. 
Moreover, $c$ can be chosen to be rational over $F$.  
\end{lem}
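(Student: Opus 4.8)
The plan is to take $c = c_0 := [0] - [\infty]$. Two of the three assertions are immediate: the cusps $\begin{pmatrix}0\\ 1\end{pmatrix}$ and $\begin{pmatrix}1\\ 0\end{pmatrix}$ are always rational, so $c_0$ is $F$-rational; and the first paragraph of the proof of Lemma~\ref{lem6.3}, which uses only that $W_{\fp^3}$ interchanges $[0]$ and $[\infty]$ and commutes with every $T_\fp$ ($\fp\nmid\fn$), shows $c_0$ is Eisenstein. So everything reduces to showing that $p$ divides $\ord(c_0)$ in $J_0(\fp^3)$.

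For an upper bound on $\ord(c_0)$ I would argue as in Section~\ref{sCDG}. In addition to $W_{\fp^3}$ --- which by Lemma~\ref{lemWcusp} interchanges $[0]\leftrightarrow[\infty]$ and the cusps with $b=\fp$ with those with $b=\fp^2$ --- use the degeneracy morphisms $X_0(\fp^3)\to X_0(\fp)$ of $\S$\ref{ssDMC}, e.g. $(\phi,C_{\fp^3})\mapsto(\phi,C_\fp)$, $(\phi,C_{\fp^3})\mapsto(\phi/C_\fp,C_{\fp^2}/C_\fp)$ and $(\phi,C_{\fp^3})\mapsto(\phi/C_{\fp^2},C_{\fp^3}/C_{\fp^2})$. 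Just as for $\fn=T^3$, the cusp $[0]$ of $X_0(\fp^3)$ is the only cusp over $[0]$ of $X_0(\fp)$ and is totally ramified of index $|\fp|^2$, while $[\infty]$, the cusps with $b=\fp$, and those with $b=\fp^2$ all lie over $[\infty]$ of $X_0(\fp)$, with ramification indices one computes. Pulling the principal divisor $N(\fp)\bigl([0]-[\infty]\bigr)$ on $X_0(\fp)$ back along these maps --- $N(\fp)$, the order of $[0]-[\infty]$ at prime level, being prime to $p$ (cf. \cite{GekelerIJM}) --- and eliminating the $F$-rational classes $\sum_{b=\fp}[c]$ and $\sum_{b=\fp^2}[c]$ (the integers $|\fp|\pm1$, $q-1$ and $N(\fp)$ all being prime to $p$) gives, in the $p$-primary part $\cC(\fp^3)_p$, the relation $|\fp|^2 c_0=0$, and shows that $\cC(\fp^3)_p$ is generated by $c_0$ up to units. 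This is just bookkeeping with ramification indices.

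The real content is the matching lower bound: $c_0\neq 0$ in $\cC(\fp^3)_p$. That this is not formal is shown by Question~(b) and Example~\ref{exmP2} --- for $\fn=\fp^2$ the divisor $[0]-[\infty]$ need not have order divisible by $p$, so the extra factor of $\fp$ must be used essentially. Here I would detect $c_0$ by specializing at $\infty$. By the monodromy exact sequence (\ref{eqmopa-inf}), $\Phi_\infty\cong\Hom(\cH_0(\fp^3,\Z),\Z)/\iota(\cH_0(\fp^3,\Z))$, and one identifies $\wp_\infty(c_0)$ with the class of a suitable integral multiple of the Eisenstein series $E_{\fp^3}$, viewed via (\ref{eqPIP}) as a functional on $\cH_0(\fp^3,\Z)$. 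Since $E_{\fp^3}$ is itself Eisenstein by (\ref{eqEisisEis}), one then reads off the order of $\wp_\infty(c_0)$ from the values of $E_{\fp^3}$ on the edges of $\G_0(\fp^3)\bs\sT$ attached to $[0]$ and $[\infty]$ (computed as in Section~\ref{sec4}); the factor $|\fp|$ that appears there --- through the normalization $\nu(\fp^3)q^{-k+1}$ and the cusp widths, exactly as the factor $q$ did for $\fn=T^3$ --- forces $p\mid\ord(\wp_\infty(c_0))$, hence $p\mid\ord(c_0)$. Alternatively one may compute the divisor on $X_0(\fp^3)$ of the Drinfeld--discriminant modular unit $\Delta(\fp^3 z)/\Delta(z)$ (together with enough of its companions $\Delta(\fp^j z)/\Delta(\fp^{j'}z)$ to control the $F$-rational part of $\cC(\fp^3)$) and check that the multiplicity of $c_0$ has $p$-valuation equal to that of $|\fp|$.

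I expect this last step to be the main obstacle. For $\deg\fp=1$ the quotient graph $\G_0(\fp^3)\bs\sT$ is the small one of Figure~\ref{Fig6}, and one reads off $\Phi_\infty\cong\Z/q^2\Z$ with $\wp_\infty(c_0)$ a generator by inspection; but once $\deg\fp\geq2$ the graph $\G_0(\fp^3)\bs\sT$ grows with $|\fp|$, so certifying that $c_0$ is \emph{nonzero} modulo $p$ --- rather than just the relation $|\fp|^2 c_0=0$ --- requires a uniform computation, either of $(\Phi_\infty)_p$ via the monodromy pairing on $\cH_0(\fp^3,\Z)$ or of the discriminant modular unit. The ``moreover'' is free, since $c=c_0$ was rational from the start.
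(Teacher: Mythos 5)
Your choice of divisor differs from the paper's: you work with $c_0=[0]-[\infty]$, whereas the paper takes $c=\sum_{i=1}^m(\tilde z_i-[0])$, the sum over the totally ramified height-one cusps. For your $c_0$ the rationality and Eisenstein properties are indeed immediate (the first paragraph of the proof of Lemma \ref{lem6.3} applies verbatim), and your observation that the whole content is the lower bound $p\mid\ord(c_0)$ is correct. That divisibility is very likely true (the expected order of $[0]-[\infty]$ on $X_0(\fp^3)$ has $p$-part $|\fp|^2$), so the choice itself is not the problem.

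The problem is that neither of your two sketches actually establishes $p\mid\ord(c_0)$, and you concede as much. The route through $\wp_\infty$ requires computing the $p$-primary part of $\Phi_\infty$ for $\G_0(\fp^3)\bs\sT$ with $\deg\fp$ arbitrary; the graph grows with $|\fp|$, this is not a finite check, and identifying $\wp_\infty(c_0)$ with a multiple of $E_{\fp^3}$ does not by itself produce a lower bound on its order. The route through the discriminant function, as you describe it (``compute the divisor of $\Delta(\fp^3z)/\Delta(z)$ and its companions and check the multiplicity of $c_0$''), only yields an \emph{upper} bound: exhibiting a modular unit with divisor $K|\fp|^2\,c_0$ shows that $\ord(c_0)$ divides $K|\fp|^2$, not that $p$ divides it. To conclude a lower bound one must rule out that the relevant unit admits a $p$-power root, and this is exactly the ingredient your proposal omits: the paper writes down the explicit identity (\ref{eqDeltap}), assumes $p\nmid\ord(c)$, produces a function $\Theta$ with $\Theta^{|\fp|}$ equal to that unit, and derives a contradiction with Gekeler's theorem \cite[Cor.~3.5]{Discriminant} that the largest $r$ for which $\Delta/\Delta_a$ has an $r$-th root in $\cO(\Omega)^\times$ divides $(q-1)(q^2-1)$, hence is prime to $p$. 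Without this ``no $|\fp|$-th roots'' step (or an equally effective substitute), the key assertion of the lemma is not proved. If you want to keep $c_0$, you would need the analogous identity expressing a prime-to-$p$ multiple of $|\fp|^2c_0$ as the divisor of an explicit monomial in $\Delta,\Delta_\fp,\Delta_{\fp^2},\Delta_{\fp^3}$ and then run the same contradiction; the paper's divisor $c$ is chosen precisely because the corresponding identity and descent are cleanest there.
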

\begin{proof} Consider the functorial morphism $X_0(\fp^3)\to X_0(\fp^2)\to X_0(\fp)\to X_0(1)$. 
The cusps of these curves and the ramification indices under the morphisms are given in Figure \ref{FigCp3}, where 
$$
m=\frac{|\fp|-1}{q-1}. 
$$ 
The ramification indices can be computed using \cite[(3.10)]{Discriminant}. 
The cusps $\tilde{z}_1, \cdots, \tilde{z}_m$ are given by $\begin{pmatrix} u \\ \fp\end{pmatrix}$, $u$ is monic of degree $<\deg(\fp)$; 
i.e., they have height $1$ in the terminology of \cite{Discriminant}. This implies that $\tilde{z}_1, \cdots, \tilde{z}_m$
are conjugate over $F$; see Lemma \ref{lemCD1}. (The cusps $\omega_1, \cdots, \omega_m$ 
are also conjugate over $F$, so only the cusps $[0]$ and $[\infty]$ of $X_0(\fp^3)$ are rational over $F$, unless $\deg(\fp)=1$.) 
\begin{figure}
$$
\xymatrix{
X_0(\fp^3) \ar[d]_{\tilde{f}} & & [0]\ar@{-}[dr]_-{|\fp|} & \tilde{z}_1 \ar@{-}[dr]_-{|\fp|} & \cdots & \tilde{z}_m \ar@{-}[dr]_-{|\fp|} 
& \omega_1\ar@{-}[drr]_-{q-1} & \cdots & \omega_m \ar@{-}[d]_-{q-1} & [\infty]\ar@{-}[dl]^-{1}\\ 
X_0(\fp^2)\ar[d]_{f} & &  & [0]\ar@{-}[dr]_-{|\fp|} & {z}_1 \ar@{-}[drr]_-{q-1} & \cdots & {z}_m \ar@{-}[d]_-{q-1}& & [\infty] \ar@{-}[dll]^-{1} & \\
X_0(\fp)\ar[d] & &  &  & [0]\ar@{-}[dr]_-{|\fp|} & & [\infty]\ar@{-}[dl]^-{1} & & &\\ 
X_0(1) & &  &  & & [\infty] &  & & &
}
$$
\caption{Cusps of $X_0(\fp^3)$}\label{FigCp3}
\end{figure}

Consider 
$$
c:=\sum_{i=1}^m (\tilde{z}_i-[0])\in \cC(\fp^3). 
$$
Note that the cusps appearing in $c$ are exactly the ones which totally ramify under $\tilde{f}$. From the 
previous discussion we see that $c$ is rational over $F$. Let $\fq\neq \fp$ be a prime. Any matrix of the form 
$\begin{pmatrix} \fq & 0 \\ 0 & 1\end{pmatrix}$ or $\begin{pmatrix} 1 & s \\ 0 & \fq\end{pmatrix}$, $s\in A$, preserves 
the heights of the cusps, and induces a permutation of the set $\{ \tilde{z}_1, \cdots, \tilde{z}_m\}$. This implies that 
$T_\fq c=(|\fq|+1)c$, so $c$ is Eisenstein. It remains to show that the order of $c$ is divisible by $p$. 

Let $\Delta(z)$, $z\in \Omega$, be the Drinfeld discriminant function.  
This is a $\C_\infty$-valued modular form for $\GL_2(A)$ of weight $q^2-1$ and type $0$; cf. \cite{Discriminant}.  
For $a\in A$, denote $\Delta_a(z)=\Delta(a z)$. The functions $\Delta/\Delta_\fp$ and $\Delta_\fp/\Delta_{\fp^2}$
are $\G_0(\fp^3)$ invariant, so can be considered as rational functions on $X_0(\fp^3)_{\C_\infty}$. One computes 
that 
\begin{equation}\label{eqDeltap}
\mathrm{div}\left(\left(\frac{\Delta_\fp}{\Delta}\right)^{|\fp|}\cdot \left(\frac{\Delta_\fp}{\Delta_{\fp^2}}\right)\right) 
= (|\fp|^2-1)(q-1)|\fp|\cdot c. 
\end{equation}
If the order of $c$ is not divisible by $p$, then the order of $c$ divides $(|\fp|^2-1)(q-1)$. This implies that 
there exists a function $\Theta\in \C_\infty(X_0(\fp^3))$ such that $\mathrm{div}(\Theta)=(|\fp|^2-1)(q-1) c$. 
Comparing with (\ref{eqDeltap}), we get 
$$
\Theta^{|\fp|}=\alpha \left(\frac{\Delta_\fp}{\Delta}\right)^{|\fp|}\cdot \left(\frac{\Delta_\fp}{\Delta_{\fp^2}}\right)
\quad \text{for some $\alpha\in \C_\infty^\times$}. 
$$
Therefore, $\frac{\Delta_\fp}{\Delta_{\fp^2}}(z)=\frac{\Delta}{\Delta_\fp}(\fp z)$ is a $|\fp|$-th power in $\cO(\Omega)^\times$ 
(= the group of nowhere vanishing holomorphic functions on $\Omega$). But according to \cite[Cor. 3.5]{Discriminant} 
the largest integer $r$ such that $\Delta/\Delta_a$ has an $r$th root in $\cO(\Omega)^\times$ divides $(q-1)(q^2-1)$. 
This leads to a contradiction. 
\end{proof}

\begin{lem}\label{lem6.5}
Let $\fm, \fn\lhd A$. Assume $\fm$ divides $\fn$. If there is $c\in \cC(\fm)$ satisfying $(\star)$, then there is 
$c'\in \cC(\fn)$ satisfying $(\star)$. Moreover, if $c$ is $F$-rational, then $c'$ also can be chosen to be $F$-rational. 
\end{lem}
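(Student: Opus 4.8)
Recall that establishing $(\star)$ for $\fn$ means producing $c'\in\cC(\fn)$ that is Eisenstein and of order divisible by $p$, and that when $c$ is $F$-rational we want $c'$ to be $F$-rational too. The plan is to transport $c$ along the functorial degeneracy morphism $\pi\colon X_0(\fn)\to X_0(\fm)$ of \eqref{eqLLM}. Recall from $\S$\ref{ssDMC} that $\pi$ is defined over $F$ and maps cusps to cusps. Let $\pi^\ast\colon J_0(\fm)\to J_0(\fn)$ be the induced homomorphism of Jacobians (Picard functoriality), and set $c':=\pi^\ast(c)$. Since $\pi$ carries cusps to cusps, $\pi^\ast$ carries classes of differences of cusps to classes of differences of cusps, so $c'\in\cC(\fn)$.

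Next I would dispatch the two ``soft'' properties. For every prime $\fq$ coprime to $\fn$ one has $\fq\nmid\fm$ as well (because $\fm\mid\fn$), and the Hecke correspondence $T_\fq$ on $X_0(\fn)$ is compatible with $\pi$; hence $T_\fq c'=T_\fq\pi^\ast c=\pi^\ast(T_\fq c)=\pi^\ast\big((|\fq|+1)c\big)=(|\fq|+1)c'$, so $c'$ is Eisenstein. Since $\pi$, and therefore $\pi^\ast$, is defined over $F$, it commutes with the action of $\Gal(\overline{F}/F)$; thus if the class $c$ is $\Gal(\overline{F}/F)$-fixed then so is $c'$, i.e.\ $c'$ is $F$-rational whenever $c$ is.

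The crux is to show $p\mid\ord(c')$. Let $d$ denote the $p$-primary component of $c$; then $d\neq 0$ because $p\mid\ord(c)$, and it suffices to prove $\pi^\ast(d)\neq 0$, i.e.\ that $\pi^\ast$ is injective on the $p$-primary torsion of $J_0(\fm)$. Since $\pi_\ast\circ\pi^\ast$ is multiplication by $\deg\pi$, this is immediate whenever $p\nmid\deg\pi$ (for instance when $\fn/\fm$ is squarefree and coprime to $\fm$), because then $\deg\pi$ is invertible on $p$-primary torsion. In general $p\mid\deg\pi$, and here the argument I would run is: factor $\pi$ as a composite of degeneracy morphisms of the shape $X_0(\fl^{j}\fa)\to X_0(\fl^{j-1}\fa)$ for primes $\fl$, refined further into coverings of prime degree; the steps of degree prime to $p$ contribute no $p$-torsion to the (finite) kernel of the corresponding pullback, while each step of degree $p$ is a \emph{ramified} covering of Drinfeld modular curves (the cusp widths change under it), and a ramified covering of prime degree admits no nontrivial \'etale subcover, so its pullback is injective. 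Combining the steps gives $\big(\ker(\pi^\ast\colon J_0(\fm)\to J_0(\fn))\big)_p=0$, hence $\pi^\ast(d)\neq 0$ and $p\mid\ord(c')$.

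The main obstacle is precisely this last point: controlling the $p$-primary part of $\ker\pi^\ast$, equivalently ruling out a nontrivial \'etale subcover of $p$-power degree inside the degeneracy covering. This amounts to a careful bookkeeping of the ramification of $\pi$ at the cusps, via the stabilizers of cusps in $\G_0(\fn)$ (as in Section \ref{sHA}) or the ramification formulas of \cite{Discriminant}. As an alternative route that avoids the prime-degree factorization, one could instead choose, for each cusp occurring in $c$, a preimage cusp of $X_0(\fn)$ at which $\pi$ has ramification index coprime to $p$, form the corresponding $c'\in\cC(\fn)$, and apply $\pi_\ast$ to recover a multiple of $c$ by an integer prime to $p$; this again reduces the assertion to a statement about the cusp ramification indices of degeneracy maps. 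Either way, the $F$-rationality and the Eisenstein property are formal, and only the order divisibility requires work.
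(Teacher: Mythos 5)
Your construction $c'=\pi^\ast(c)$ and the reduction of everything to the statement that $\ker\bigl(\pi^\ast\colon J_0(\fm)\to J_0(\fn)\bigr)$ has no points of order $p$ are exactly the paper's first step (Eisenstein property, $F$-rationality, and the passage from ``order divisible by $p$'' to ``image has order divisible by $p$'' are indeed formal). The gap is in your proof of that key fact. First, the refinement of $\pi$ into coverings of prime degree does not exist in general: after splitting off one prime at a time you are left with degeneracy maps of degree $|\fl|$ or $|\fl|+1$, and $|\fl|=q^{\deg\fl}$ is a $p$-power which is usually not prime; there are no natural intermediate modular curves, and a non-Galois covering of composite degree need not factor through any intermediate curve at all. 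Second, even for a genuine degree-$p$ step, the argument ``a ramified covering of prime degree has no nontrivial \'etale subcover, hence its pullback is injective'' only controls the prime-to-$p$ part of the kernel: a line bundle of order prime to $p$ killed by $\pi^\ast$ corresponds by Kummer theory to an \'etale cyclic cover of the base trivialized on the source, and that is where ramification enters. A point of order $p$ in characteristic $p$ corresponds instead to a $\mu_p$-torsor, i.e.\ a purely inseparable covering, for which ramification of $\pi$ is irrelevant; what one would need there is a separability argument, which you do not give. Third, the alternative route (choosing one preimage cusp over each cusp in the support of $c$) produces a divisor that is in general neither Eisenstein nor $F$-rational --- cusps above a rational cusp are typically conjugate over $F$ (cf.\ Figure \ref{FigCp3}), and arbitrary cuspidal divisors are not Eisenstein --- so it cannot verify $(\star)$; note also that with that choice $\pi_\ast c'=c$ on the nose, so the ramification indices never even enter. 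Hence the crucial divisibility $p\mid\mathrm{ord}(c')$ is not established.

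The paper proves the key claim by a global argument at $\infty$ which your proposal is missing: both $J_0(\fm)$ and $J_0(\fn)$ are rigid-analytically uniformized over $\C_\infty$ as tori modulo lattices \cite{GR}, the induced map of lattices is the transfer $V\colon\bG\to\bD$, which under the isomorphisms of \cite{GN} becomes the inclusion $\cH_0(\fm,\Z)\hookrightarrow\cH_0(\fn,\Z)$ and is therefore injective with torsion-free cokernel; consequently $\ker(\pi^\ast)$ coincides with the kernel of a homomorphism of tori in characteristic $p$, whose $p$-primary part is connected and so has no $p$-torsion points. You would need to import this uniformization argument (or find a genuine substitute, e.g.\ a correct separability/inseparability analysis carried out without the unavailable prime-degree factorization) to complete the proof.
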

\begin{proof} By Picard functoriality, the morphism $f:X_0(\fn)\to X_0(\fm)$ in (\ref{eqLLM}) induces 
a homomorphism $f^\ast: J_0(\fm)\to J_0(\fn)$ defined over $F$. Moreover, $f^\ast$ is compatible with the action of $T_\fp$ 
for any prime $\fp\nmid \fn$, and restricts to a homomorphism $\cC(\fm)\to \cC(\fn)$. 
Therefore, it is enough to show that the kernel of $f^\ast$ does not have any torsion points of 
order $p$. 

Let $\G:=\G_0(\fm)$ and $\Delta:=\G_0(\fn)$. 
Denote by $\G^\ab$ the abelianization of $\G$ and let $\bG:=\G^\ab/(\G^\ab)_\tor$ 
be the maximal abelian torsion-free quotient of $\G$. The inclusion $\Delta\hookrightarrow \G$ 
induces a homomorphism $V:\bG\to \bD$, the transfer map; see \cite[p. 71]{GR}. 
First, we note that the homomorphism $V: \bG\to \bD$ is injective with torsion-free cokernel. 
Indeed, by \cite[p. 72]{GR}, there is a commutative diagram 
$$
\xymatrix{
\bG \ar[r]^-{j_\G} \ar[d]_V &  \cH_0(\fm, \Z) \ar@{^{(}->}[d] \\
\bD \ar[r]^-{j_\Delta} & \cH_0(\fn, \Z)
}
$$
where the right vertical map is the natural injection. This last homomorphism 
obviously has torsion-free cokernel. Since by \cite{GN} $j_\G$ and $j_\Delta$ are 
isomorphisms, the claim follows.

Next, by the results in Sections 6 and 7 of \cite{GR}, there is a commutative 
diagram 
$$
\xymatrix{
0\ar[r] & \bG \ar[r] \ar[d]^V &  \Hom(\bG, \C_\infty^\times) \ar[r]\ar[d]^\phi & J_0(\fm)\ar[r]\ar[d]^{f^\ast} & 0 \\
0\ar[r] & \bD \ar[r] & \Hom(\bD, \C_\infty^\times) \ar[r] & J_0(\fn) \ar[r]& 0. 
}
$$
Since the cokernel of $V$ is a free $\Z$-module, $\ker(f^\ast)=\ker(\phi)$. Finally, 
$\phi$ is a homomorphism of tori in characteristic $p$, so the $p$-primary part of its kernel is connected. 
\end{proof}

\begin{proof}[Proof of Theorem \ref{thm-pEis}] Assume the level $\fn$ is divisible by $\fp^2$ 
for some prime $\fp$, but $\fn\neq \fp^2$. 
Thanks to Lemma \ref{lem6.2} and Lemma \ref{lem6.5}, it is enough to show that $(\star)$ 
holds for $\fn=\fp^3$ and $\fn=\fp^2\fq$, where $\fq\neq \fp$ is prime. This follows 
from Lemma \ref{lem6.3} and Lemma \ref{lem6.4}. 
\end{proof}

\begin{cor}\label{corpTors}
Assume $\fn$ is divisible by $\fp^2$ for some prime $\fp\lhd A$, but $\fn\neq \fp^2$. Then 
there is a cuspidal divisor of order divisible by $p$ which is Eisenstein and rational over $F$. 
In particular, $\cC(\fn)(F)_p\neq 0$ and $\cT(\fn)_p\neq 0$. 
\end{cor}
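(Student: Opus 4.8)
The plan is to derive Corollary \ref{corpTors} directly from Theorem \ref{thm-pEis} and the lemmas used in its proof, rather than reproving anything from scratch. The theorem's proof already shows that condition $(\star)$ holds for $\fn$: by Lemma \ref{lem6.5} it suffices to exhibit $c\in \cC(\fm)$ satisfying $(\star)$ for some $\fm\mid \fn$ with $\fm$ either $\fp^3$ or $\fp^2\fq$ (for a prime $\fq\neq\fp$), and Lemmas \ref{lem6.3} and \ref{lem6.4} produce such a $c$. The extra content needed for the corollary is the \emph{$F$-rationality} of the cuspidal divisor, so I would carefully track that property through the chain of reductions.

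First I would observe that in the base cases the divisor produced is $F$-rational: in Lemma \ref{lem6.3} the divisor is $c_0=[0]-[\infty]$, and both cusps $[0],[\infty]$ are rational, so $c_0$ is $F$-rational; in Lemma \ref{lem6.4} the divisor $c=\sum_{i=1}^m(\tilde z_i-[0])$ is shown in the proof to be $F$-rational (the $\tilde z_i$ form a single Galois orbit, so their sum is defined over $F$). Next, Lemma \ref{lem6.5} explicitly states that if $c$ is $F$-rational then the divisor $c'\in\cC(\fn)$ it produces can also be chosen $F$-rational — this is because the map $f^\ast:\cC(\fm)\to\cC(\fn)$ is defined over $F$. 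Chaining these together, starting from whichever base case divides $\fn$ (namely $\fp^3\mid\fn$ if $r_\fp\geq 3$, or $\fp^2\fq\mid\fn$ for a suitable prime $\fq\mid\fn$ distinct from $\fp$, which exists precisely because $\fn\neq\fp^2$), we obtain a cuspidal divisor $c'\in\cC(\fn)$ which is Eisenstein, $F$-rational, and of order divisible by $p$.

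Then I would record the two consequences. Since $c'$ lies in $\cC(\fn)$, is $F$-rational, and has order divisible by $p$, it gives a nonzero element of the $p$-primary part of the $F$-rational cuspidal divisor group, so $\cC(\fn)(F)_p\neq 0$. Since $\cC(\fn)\subseteq J_0(\fn)(\bar F)$ and $c'$ is $F$-rational, $c'\in J_0(\fn)(F)$; as it is torsion of order divisible by $p$, an appropriate multiple of $c'$ has exact order $p$, whence $\cT(\fn)_p=J_0(\fn)(F)_{\tor,p}\neq 0$.

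There is essentially no obstacle here: the corollary is a bookkeeping consequence of results already established, the only point requiring attention being that the $F$-rationality claim is genuinely propagated by Lemma \ref{lem6.5} (which is why that lemma was stated with the rationality clause) and is present in the base cases. So the "hard part", such as it is, is simply to verify that the base-case divisors $c_0$ and $c=\sum(\tilde z_i-[0])$ are indeed $F$-rational — which is immediate from the rationality of $[0],[\infty]$ and from the Galois-orbit argument in the proof of Lemma \ref{lem6.4}, respectively.

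\begin{proof}
By Theorem \ref{thm-pEis} and its proof, condition $(\star)$ holds for $\fn$; we track the $F$-rationality of the relevant cuspidal divisor through the argument. Since $\fn$ is divisible by $\fp^2$ but $\fn\neq\fp^2$, either $\fp^3\mid \fn$, or $\fn$ is divisible by $\fp^2\fq$ for some prime $\fq\neq \fp$. In the first case, Lemma \ref{lem6.4} produces $c\in \cC(\fp^3)$ satisfying $(\star)$ which is moreover rational over $F$. In the second case, Lemma \ref{lem6.3} shows that $c_0=[0]-[\infty]\in \cC(\fp^2\fq)$ satisfies $(\star)$; since the cusps $[0]$ and $[\infty]$ are rational, $c_0$ is $F$-rational. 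In either case we have an $F$-rational $c\in \cC(\fm)$ satisfying $(\star)$ for some $\fm\mid \fn$ ($\fm=\fp^3$ or $\fm=\fp^2\fq$). By Lemma \ref{lem6.5} there is $c'\in \cC(\fn)$ satisfying $(\star)$ and rational over $F$. Thus $c'$ is a cuspidal divisor which is Eisenstein, rational over $F$, and of order divisible by $p$.

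In particular, replacing $c'$ by a suitable multiple if necessary, $\cC(\fn)(F)$ contains an element of exact order $p$, so $\cC(\fn)(F)_p\neq 0$. Since $c'$ is $F$-rational and torsion, $c'\in \cT(\fn)=J_0(\fn)(F)_{\tor}$, and its order is divisible by $p$, hence $\cT(\fn)_p\neq 0$.
\end{proof}
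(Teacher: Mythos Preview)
Your proposal is correct and takes essentially the same approach as the paper: the corollary is stated there without proof, as it follows immediately from the proof of Theorem \ref{thm-pEis} together with the $F$-rationality clauses built into Lemmas \ref{lem6.3}, \ref{lem6.4}, and \ref{lem6.5}. Your write-up simply makes explicit the bookkeeping that the paper leaves implicit.
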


The only non square-free level excluded from Theorem \ref{thm-pEis} is $\fn=\fp^2$. We will show 
that this is a necessary restriction. First, we show that, in contrast to Corollary \ref{corpTors}, $\cC(\fp^2)$ does not have any $F$-rational 
points of order $p$.   

\begin{lem}\label{prop6.6}
$\cC(\fp^2)(F)_p=0$. 
\end{lem}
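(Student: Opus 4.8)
The plan is to control $\cC(\fp^2)(F)$ by two explicit cuspidal classes pulled back from $X_0(\fp)$, and then to kill its $p$-part by averaging over a Galois group of order prime to $p$.

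Dispose first of the case $\deg\fp=1$: then $\deg\fp^2=2$, so $X_0(\fp^2)$ has genus $0$ and $\cC(\fp^2)=0$. Assume henceforth $\deg\fp\geq 2$. By Lemma~\ref{lemCD1} the cusps of $X_0(\fp^2)$ are the two rational cusps $[\infty]$ (with $b=1$) and $[0]$ (with $b=\fp^2$), together with a single $\Gal(\bar F/F)$-orbit $\{z_1,\dots,z_m\}$ of cusps with $b=\fp$, where $m=(|\fp|-1)/(q-1)=1+q+\cdots+q^{\deg\fp-1}$. The Galois action on the cusps factors through the abelian group $\Gal(F_+(\fp^2)/F)$ and is transitive on the $z_i$, hence simply transitive there; since $[0]$ and $[\infty]$ are fixed and $\cC(\fp^2)$ is generated by differences of cusps, the Galois action on $\cC(\fp^2)$ factors through a group $G$ with $|G|\mid m$, so $p\nmid|G|$. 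Moreover the $G$-fixed degree-$0$ divisors supported on the cusps form the rank-$2$ lattice $\Z c_0\oplus\Z c_1$, where $c_0:=[0]-[\infty]$ and $c_1:=\sum_i z_i-m[\infty]$.

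The heart of the proof is to show that $c_0$ and $c_1$ have order prime to $p$ in $\cC(\fp^2)$. Recall that $\cC(\fp)$ is a cyclic group of order prime to $p$ generated by $[0]_\fp-[\infty]_\fp$ (\cite{GekelerIJM}); let $N$, prime to $p$, be its order, so $N([0]_\fp-[\infty]_\fp)$ is a principal divisor on $X_0(\fp)$ (e.g.\ a power of the discriminant unit $\Delta/\Delta_\fp$). Pulling this relation back along the forgetful morphism $f\colon X_0(\fp^2)\to X_0(\fp)$ and along $g:=f\circ W_{\fp^2}$, and computing $f^*$ and $g^*$ on cuspidal divisors from the ramification of these maps over the cusps (cf.\ Figure~\ref{FigCp3} and \cite[(3.10)]{Discriminant}; note that $W_{\fp^2}$ swaps $[0],[\infty]$ and permutes the $z_i$ by Lemma~\ref{lemWcusp}), one obtains the relations
$$
N\bigl(|\fp|\,c_0-(q-1)c_1\bigr)=0,\qquad N\bigl(c_0+(q-1)c_1\bigr)=0
$$
in $\cC(\fp^2)$. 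These force $N(|\fp|+1)c_0=0$ and $N(q-1)c_1=-Nc_0$; since $N(|\fp|+1)(q-1)$ is prime to $p$, both $c_0$ and $c_1$ have order prime to $p$, and hence so does every element of $\Z c_0+\Z c_1$.

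Finally, let $x\in\cC(\fp^2)(F)=\cC(\fp^2)^G$. Choosing a divisor $D$ supported on the cusps with $[D]=x$ and averaging, $\sum_{\sigma\in G}\sigma D$ is $G$-invariant, so $[\sum_{\sigma\in G}\sigma D]=|G|\,x$ lies in $\Z c_0+\Z c_1$. If $x$ has $p$-power order then so does $|G|\,x$, which therefore vanishes (the $p$-part of $\Z c_0+\Z c_1$ being trivial); as $p\nmid|G|$ this gives $x=0$. Hence $\cC(\fp^2)(F)_p=0$. The main obstacle is the explicit input of the third paragraph — identifying the cusps of $X_0(\fp^2)$ and the local structure of the two degeneracy maps over them, so that the displayed relations are correct and (being exchanged by $W_{\fp^2}$) linearly independent; this is assembled from Gekeler's formulas for the cusps and the discriminant function.
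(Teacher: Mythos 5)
Your proof is correct and follows essentially the same route as the paper: reduce to the two Galois-invariant cuspidal classes $c_0=[0]-[\infty]$ and $c_1=\sum_i z_i-m[\infty]$, and kill their $p$-parts by pulling back the relation $N(\fp)\bigl([0]-[\infty]\bigr)=0$ from $X_0(\fp)$. There are two small points where you diverge, both to your credit. First, where the paper quotes Gekeler's formula \cite[Cor.~3.25]{Discriminant} for the exact order $M(\fp)$ of $c_0$ in $\cC(\fp^2)$, you instead produce a second relation by pulling back along $f\circ W_{\fp^2}$ (equivalently, by applying $W_{\fp^2}$ to the first relation, using $W_{\fp^2}c_0=-c_0$ and the permutation of the $z_i$); adding the two relations gives $N(\fp)(|\fp|+1)c_0=0$, which is a weaker bound than $M(\fp)$ but still prime to $p$, and this makes the argument more self-contained. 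Second, you make explicit the passage from Galois-fixed \emph{classes} to Galois-fixed \emph{divisors} via averaging over the group $G$ of order $m=(|\fp|-1)/(q-1)\equiv 1\pmod p$; the paper simply asserts that $\cC(\fp^2)(F)$ is generated by $c_0$ and $c_1$, so your version closes a gap that the paper leaves implicit. The computations of $f^\ast$ on the cusps (ramification $|\fp|$ over $[0]$, and $q-1$ at each $z_i$ over $[\infty]$) agree with Figure~\ref{FigCp3}, and the genus-zero disposal of $\deg\fp=1$ is fine.
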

\begin{proof}
With notation as in Figure \ref{FigCp3}, let $c_i:=z_i-[\infty]$, $1\leq i\leq m$. As in the proof of Lemma \ref{lem6.4}, 
the cusps $z_1, \dots, z_m$ are conjugate over $F$, so $\cC(\fp^2)(F)$ is generated by $c_0=[0]-[\infty]$ 
and $c:=\sum_{i=1}^m (z_i-[\infty])$. By \cite[Cor. 3.25]{Discriminant}, the order of $c_0$ is 
$$
M(\fp)=
\begin{cases}
\frac{|\fp|^2-1}{q^2-1} & \text{if $q$ is even or $\deg(\fp)$ is odd};\\
\frac{|\fp|^2-1}{2(q^2-1)} & \text{otherwise.}
\end{cases}
$$
Thus, it is enough to show that the order of $c$ is coprime to $p$. 
By \cite[Cor. 3.23]{Discriminant}, the cuspidal divisor $([0]-[\infty])\in \cC(\fp)$ has order 
\begin{equation}\label{eqNfp}
N(\fp)=
\begin{cases}
\frac{|\fp|-1}{q-1} & \text{if $\deg(\fp)$ is odd};\\
\frac{|\fp|-1}{q^2-1} & \text{if $\deg(\fp)$ is even.}
\end{cases}
\end{equation}
On the other hand, 
$f^\ast([0]-[\infty])=|\fp|c_0-(q-1)c$. Hence $M(\fp)N(\fp)(q-1)c=0$. 
Since $M(\fp)N(\fp)(q-1)$ is coprime to $p$, the claim follows. 
\end{proof}

\begin{example}\label{exmP2}
Assume $q=2$, $\fp=T^2+T+1$, and $\fn=\fp^2$. The genus of $X_0(\fn)$ is $2$. To simplify the 
notation, we will write $\cC$ for $\cC(\fn)$ and $\T$ for $\T(\fn)$.  
The cusps of $X_0(\fn)$ are $[0]$, $[\infty]$, and 
$$
z_1=\begin{pmatrix} 1 \\ \fp\end{pmatrix}, \quad z_2=\begin{pmatrix} T \\ \fp\end{pmatrix}, 
\quad z_3=\begin{pmatrix} 1+T \\ \fp\end{pmatrix}. 
$$
In the notation of Lemma \ref{prop6.6}, $\cC$ is generated by $c_0, c_1, c_2, c_3$. 
The divisor $[0]-[\infty]$ is principal on $X_0(\fp)$, since $X_0(\fp)$ has genus $0$. Therefore
$$
0=f^\ast([0]-[\infty])=4c_0-(c_1+c_2+c_3).  
$$
On the other hand, the order of $c_0$ is $5$, so the previous relation implies that $c_1+c_2+c_3=-c_0$. Thus, 
$$
\cC(F)=\langle c_0\rangle\cong \Z/5\Z. 
$$ 

To compute the whole group $\cC$ one could use the fact that in this case $X_0(\fn)$ is hyperelliptic, and the 
Atkin-Lehner involution $W_{\fn}$ is the hyperelliptic involution; see \cite{SchweizerHE}. $W_{\fn}$ 
fixes the cusps $z_1, z_2, z_3$, and 
interchanges $[0]\leftrightarrow [\infty]$. Arguing as in Section \ref{sCDG}, i.e., pulling back from $X_0(\fn)/W_{\fn}\cong \p^1_F$
different principal divisors supported on the images of the cusps, one obtains the relations 
$$
c_0=2c_1=2c_2=2c_3. 
$$
This implies that $\cC$ is a quotient of $\Z/5\Z\times \Z/2\Z\times \Z/2\Z$. Next, one can compute the 
quotient graph $\G_0(\fn)\bs \sT$ using the algorithm in \cite{GN}. The result is given in Figure \ref{Fig10}. 
The dashed edges indicate the half-lines corresponding to the cusps, and $a_\infty$, $d_\infty$ correspond to the same 
elements of $\GL_2(\Fi)$ as in Figure \ref{Fig5}. From this one easily computes, as in Section \ref{sCDG}, that 
$\Phi_\infty\cong \Z/5\Z\times \Z/2\Z\times \Z/2\Z$ and $\wp_\infty: \cC\to \Phi_\infty$ 
is surjective. Therefore
$$
\cC\overset{\wp_\infty}{\cong}\Phi_\infty\cong  \Z/5\Z\times \Z/2\Z\times \Z/2\Z. 
$$
In particular, $\cC_p\cong \Z/2\Z\times \Z/2\Z$ is non-trivial, although $\cC(F)_p=0$.  

\begin{figure}
\begin{tikzpicture}[scale=1.4, ->, >=stealth, semithick, inner sep=.5mm, vertex/.style={circle, fill=black}]

\node[vertex] (00) at (0, 0) {};
\node[vertex] (02) at (0, 2) {};
\node[vertex] (22) at (2, 2) {};
\node[vertex] (20) at (2, 0) {};
\node[vertex] (10) at (1, 0) {};
\node[vertex] (115) at (1, 1.5) {};
\node[vertex] (125) at (1, 2.5) {};
\node[vertex] (105) at (1, -0.5) {};
\node[vertex] (v) at (.5, -.5) {};
\node (inf) at (-1, -1) {$[\infty]$};
\node (0) at (3, -1) {$[0]$};
\node (z1) at (1, -1.2) {$z_1$};
\node (z2) at (1, .8) {$z_2$};
\node (z3) at (1, 3.2) {$z_3$};

\path[]
(22) edge  (115)
(22) edge (125)
(115) edge  (02)
(125) edge  (02)
(02) edge node[auto,swap] {$a_\infty$}  (00)
(20) edge (10)
(10) edge node[auto,swap] {$d_\infty$} (00)
(22) edge (20)
(v) edge (105)
(105) edge (10)
(00) edge[dashed] (inf) 
(20) edge[dashed] (0)
(105) edge[dashed] (z1)
(115) edge[dashed] (z2)
(125) edge[dashed] (z3);

\end{tikzpicture}
\caption{$\G_0((T^2+T+1)^2)\bs \sT$}\label{Fig10}
\end{figure}

Next, we consider the Eisenstein ideal. Calculations similar to those in $\S$\ref{sPairing} show that 
$$
\T=\T^0\cong \Z T_T\oplus \Z T_{T+1},  
$$ 
and 
$$
T_T+T_{T+1}=1. 
$$
This implies $\T/\fE\cong \Z/N\Z$ is cyclic, and $N$ divides $5$ (as $T_T+T_{T+1}=1$ modulo $\fE$ 
becomes $3+3=1$). Since $c_0$ is Eisenstein of order $5$, we get 
$$
\T/\fE\cong \Z/5\Z. 
$$
Hence $\cC[\fE]\subseteq \cC[5]$, and we get 
$$
\cC[\fE]=\langle c_0\rangle=\cC(F). 
$$
Since $\wp_\infty$ is $\T$-equivariant, we also get 
$$
\Phi_\infty[\fE]\cong \cC[\fE] \cong \T/\fE\cong \Z/5\Z. 
$$
Hence neither $\Phi_\infty$ nor $\cC$ is Eisenstein, and $p$ is not an Eisenstein prime number. 

Finally, consider the rational torsion subgroup $\cT$ of $J:=J_0(\fn)$. We apply the argument in the 
proof  of Theorem \ref{thmRTT3}, which shows that there is an injection $\cT\hookrightarrow \cJ(\F_\infty)$. 
Since $\cJ^0$ is a split torus, $\cJ^0(\F_2)\cong \F_2^\times\times \F_2^\times=1$. Hence 
$\cT\hookrightarrow \Phi_\infty$. This shows that if $\cT\neq \cC(F)$, then $J$ has a rational $2$-torsion point. 
But $J$ is $2$-dimensional and the characteristic is $2$, so 
$
J[2]\subseteq  \Z/2\Z\times \Z/2\Z$. Since $\cC[2]\cong \Z/2\Z\times \Z/2\Z$, we get $J[2]=\cC[2]$. 
This is a contradiction as $\cC(F)[2]=0$. Hence $$\cT=\cC(F)\cong \Z/5\Z.$$
\end{example}

\begin{thm}\label{thmEispnot}
Let $\fp\lhd A$ be prime. Then 
$$
\T(\fp)/\fE(\fp)\cong \Phi_\infty[\fE(\fp)]\cong \Z/N(\fp)\Z, 
$$
where $N(\fp)$ is defined in (\ref{eqNfp}). In particular, $p$ is not an Eisenstein prime for level $\fp$. 
\end{thm}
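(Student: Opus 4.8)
The plan is to run the same three-step argument as in the proof of Theorem \ref{thmTE}: first exhibit a $\T(\fp)$-equivariant surjection $\T(\fp)/\fE(\fp)\twoheadrightarrow\Z/N(\fp)\Z$, then bound $\#\,\T(\fp)/\fE(\fp)$ from above by $N(\fp)$, and finally match both sides with $\Phi_\infty[\fE(\fp)]$. Since $X_0(\fp)$ has exactly the two rational cusps $[0]$ and $[\infty]$, the cuspidal divisor group $\cC(\fp)$ is cyclic, generated by $c_0=[0]-[\infty]$, which has order $N(\fp)$ by \cite[Cor.\ 3.23]{Discriminant} (the value recorded in (\ref{eqNfp})). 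The divisor $c_0$ is Eisenstein by the argument of Lemma \ref{lem6.3}: every upper-triangular matrix fixes $[\infty]$, while $W_\fp$ interchanges $[0]$ and $[\infty]$ and commutes with each $T_\fq$ for $\fq\neq\fp$. Hence the action of $\T(\fp)$ on $c_0\in J_0(\fp)$ gives a homomorphism $\T(\fp)\to\End_\Z(\cC(\fp))\cong\Z/N(\fp)\Z$ that is surjective (since $1\in\T(\fp)$ acts as the identity) and annihilates $\fE(\fp)$; this is the desired surjection. Because $c_0$ is $F$-rational, $\wp_\infty(c_0)\in\Phi_\infty[\fE(\fp)]$ is defined and Eisenstein.

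The $p$-primary part of $\T(\fp)/\fE(\fp)$ is in fact easy to dispose of, and already yields the last assertion of the theorem. As $\cH_0(\fn',\Z)=0$ for every $\fn'$ of degree $\le 2$ (in particular for $\fn'=A$), Corollary \ref{cor1.8} gives $U_\fp=-W_\fp$ on $\cH_0(\fp,\Z)$, and there are no oldforms, so $J_0(\fp)^\new=J_0(\fp)$ and hence $\T(\fp)'=\T(\fp)^0$, $\fE(\fp)'=\fE(\fp)^0$. P\'al's theorem in \cite{PalIJNT} then gives $p\nmid\#\,\T(\fp)^0/\fE(\fp)^0$, and Lemma \ref{lemTE0}(3) forces $p$ to be coprime to the exponent, hence to the order, of $\T(\fp)/\fE(\fp)$; thus $p$ is not an Eisenstein prime for $\fp$. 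For the prime-to-$p$ part I would work over a coefficient ring $R$: Gekeler's theorem that the pairing (\ref{GPairing}) becomes perfect after inverting $p$ identifies $\Hom_R\big((\T(\fp)/\fE(\fp))\otimes_\Z R,\,R\big)$ with $\cE_{00}(\fp,R)$, so it suffices to show $\cE_{00}(\fp,R)\hookrightarrow R[N(\fp)]$. Using $U_\fp=-W_\fp$ together with the action of $W_\fp$ on the cusps, an Eisenstein cuspidal harmonic cochain is pinned down by a single Fourier coefficient, and the divisor relation on $X_0(\fp)$ responsible for the order $N(\fp)$ of $c_0$, coming from the Drinfeld discriminant function $\Delta/\Delta_\fp$ (cf.\ the proof of Lemma \ref{lem6.4} and \cite{Discriminant}), bounds that coefficient by $N(\fp)$. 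This is essentially the prime-level computation of P\'al \cite{Pal}, which I would quote (or re-derive along these lines) rather than reproduce in detail. Combining with the surjection above gives $\T(\fp)/\fE(\fp)\cong\Z/N(\fp)\Z$.

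It then remains to identify this with $\Phi_\infty[\fE(\fp)]$. Since $U_\fp=-W_\fp$ is its own Rosati adjoint, the argument of Section \ref{sec4} shows that the exact sequence (\ref{eqmopa-inf}) is equivariant for the full algebra $\T(\fp)$, not merely for $\T(\fp)^0$. Feeding in Gekeler's pairing — now integrally available on the prime-to-$p$ part, where it is perfect — together with the explicit description of $\Phi_\infty$ and of the Hecke action on it for prime level (Gekeler \cite{Analytical}, P\'al \cite{Pal}), one reads off $\Phi_\infty[\fE(\fp)]\cong\Z/N(\fp)\Z$ with $T_\fq$ acting by $|\fq|+1$ and $U_\fp$ by $1$, which is the same $\T(\fp)$-module as $\T(\fp)/\fE(\fp)$. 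The hard part, exactly as for Theorem \ref{thmTE}, is the upper bound: showing that $\fE(\fp)$ cuts out no quotient larger than the one already visible on $\cC(\fp)$. For prime level this is precisely the content of P\'al's work, while for non-prime levels it is what necessitated the case-by-case analysis of Section \ref{sec4}.
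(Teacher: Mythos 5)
The first half of your argument --- that $\T(\fp)/\fE(\fp)\cong\Z/N(\fp)\Z$ and that $p$ is not an Eisenstein prime --- is essentially sound and close to the paper's: both rest on P\'al's computation of $\T(\fp)^0/\fE(\fp)^0$ in \cite{PalIJNT}, on Lemma \ref{lemTE0}(3), and on the perfectness of (\ref{GPairing}) after inverting $p$ to pass from $\T(\fp)^0$ to $\T(\fp)$. Your use of $c_0=[0]-[\infty]$, Eisenstein of order $N(\fp)$, to produce the surjection $\T(\fp)/\fE(\fp)\twoheadrightarrow\Z/N(\fp)\Z$ is a legitimate and slightly more self-contained way to obtain the lower bound than the paper's direct citation.

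The genuine gap is in the last step, the identification $\Phi_\infty[\fE(\fp)]\cong\Z/N(\fp)\Z$. The monodromy sequence (\ref{eqmopa-inf}) tensored with $\Z[p^{-1}]$ yields only a surjection $\T(\fp)/\fE(\fp)\to\Phi_\infty[\fE(\fp)]$, i.e.\ the upper bound $\#\Phi_\infty[\fE(\fp)]\le N(\fp)$. You claim the matching lower bound can be ``read off'' from an explicit description of $\Phi_\infty$ and of the Hecke action on it, but no such description exists for general prime $\fp$: the group $\Phi_\infty$ is typically large, non-cyclic and non-Eisenstein (see Example \ref{example810}, where $\Phi_\infty\cong\Z/2\Z\times\Z/80\Z$ while $N(\fp)=5$), and the Hecke action on it is not known explicitly. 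Nor can you fall back on $\wp_\infty(c_0)$: the specialization map $\wp_\infty$ need not be injective on $\cC(\fp)$ (for $\deg(\fp)=3$ its kernel is $\Z/\gcd(3,q-1)\Z$ by (\ref{eqGekDocExm})), so the image of $\cC(\fp)$ in $\Phi_\infty$ may be strictly smaller than $\Z/N(\fp)\Z$. The paper's proof of the containment $\Z/N(\fp)\Z\subseteq\Phi_\infty[\fE(\fp)]$ goes through the structure of $J[\fE(\fp)^0]$: by \cite{PapikianMRL} it is unramified and isomorphic to $\Z/N(\fp)\Z\times\Z/N(\fp)\Z$, while $\cJ^0_{\F_\infty}[\fE(\fp)^0]\cong\Z/N(\fp)\Z$ (the image of the Shimura subgroup), so exactly one copy of $\Z/N(\fp)\Z$ survives into the component group. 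Both facts rely on the Gorenstein property of the completion of $\T(\fp)^0$ at the primes in the support of $\fE(\fp)^0$, a deep input from \cite{Pal} that your sketch does not supply.
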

\begin{proof}
By \cite[Thm. 1.2]{PalIJNT}, 
\begin{equation}\label{eqPalT0}
\T(\fp)^0/\fE(\fp)^0\cong \Z/N(\fp)\Z.
\end{equation}
(Note that what we denote by $\T(\fp)^0$ in this paper is denoted by $\T(\fp)$ in \cite{PalIJNT}.) 
The perfectness of the pairing (\ref{GPairing}) after inverting $p$, or rather the 
argument used in the proof of \cite[Thm. 3.17]{Analytical} and Corollary \ref{corIndex}, implies that 
\begin{equation}\label{eqTT0}
\T(\fp)\otimes \Z[p^{-1}]=\T(\fp)^0\otimes \Z[p^{-1}].
\end{equation}

Since $p$ does not divide $\# \T(\fp)^0/\fE(\fp)^0$, by Lemma \ref{lemTE0}, $p$ 
does not divide $\# \T(\fp)/\fE(\fp)$ either. Combining this with (\ref{eqPalT0}) and (\ref{eqTT0}), we get 
$\T(\fp)/\fE(\fp)\cong \Z/N(\fp)\Z$. 

Since $p$ is not an Eisenstein prime number, $\Phi_\infty$ cannot have Eisenstein elements of order divisible by $p$. 
Thus, after tensoring (\ref{eqmopa-inf}) with $\Z[p^{-1}]$, and again using the perfectness of (\ref{GPairing}) after inverting $p$, we 
get a surjection $\T(\fp)/\fE(\fp)\to \Phi_\infty[\fE(\fp)]$. It remains to show that $\Phi_\infty[\fE(\fp)]\supseteq \Z/N(\fp)\Z$. 

Let $J[\fE(\fp)]$ denote $J_0(\fp)(\bar{F})[\fE(\fp)]$, i.e., the subgroup of the Jacobian annihilated by $\fE(\fp)$. 
It is clear from the definitions that $J[\fE(\fp)]=J[\fE(\fp)^0]$ and $\Phi_\infty[\fE(\fp)]=\Phi_\infty[\fE(\fp)^0]$, 
where on the right hand-side we consider $J_0(\fp)(\bar{F})$ 
and $\Phi_\infty$ as $\T(\fp)^0$-modules. 
By \cite[Thm. 2.5]{PapikianMRL}, $J[\fE(\fp)^0]$ is unramified over $F$, and, as an abelian group, it is isomorphic 
to $\Z/N(\fp)\Z\times \Z/N(\fp)\Z$. If we denote by $\cJ$ the N\'eron model of $J$ over $\cO_\infty$, then we get
$$J[\fE(\fp)^0]\cong \cJ_{\F_\infty}[\fE(\fp)^0]\cong \Z/N(\fp)\Z\times \Z/N(\fp)\Z.$$
It can be deduced from \cite[p. 194]{Pal} that $\cJ^0_{\F_\infty}[\fE(\fp)^0]\cong \Z/N(\fp)\Z$ 
(in fact, this coincides with the image of the Shimura subgroup of $J_0(\fp)$). 
This last isomorphism, as well as \cite[Thm. 2.5]{PapikianMRL} used earlier, 
rely on the fact that the completion of $\T(\fp)^0$ at any prime ideal in the support of $\fE(\fp)^0$ is Gorenstein --
a rather deep property of the Hecke algebra established in \cite{Pal}. Since $\Phi_\infty\cong \cJ_{\F_\infty}/\cJ^0_{\F_\infty}$, 
we conclude that $ \Z/N(\fp)\Z \subseteq \Phi_\infty[\fE(\fp)^0]$. 
\end{proof}

\begin{example}\label{example810}
Let $q=2$ and $\fp$ be either $T^4+T^3+1$ or $T^4+T+1$, which both are irreducible over $\F_2$. 
In both cases, $\rank_\Z\T(\fp)=4$, $\T(\fp)/\fE(\fp)\cong \Z/5\Z$, $\cC(\fp)\cong \Z/5\Z$, 
and $\wp_\infty: \cC(\fp)\to \Phi_\infty$ 
is injective. However, $\Phi_\infty\cong \Z/2\Z\times \Z/80\Z$ 
for $\fp=T^4+T^3+1$ and $\Phi_\infty\cong \Z/45\Z$ for $\fp=T^4+T+1$; see \cite[(5.3.3)]{GekelerCDG}. 
Theorem \ref{thmEispnot} shows that in both cases $\Phi_\infty$ is not Eisenstein, and $\Phi_\infty[\fE(\fp)]$ coincides with $\cC(\fp)$. 
\end{example}



\end{document}